\newtheorem{theorem}{Theorem}
\newtheorem{hypothesis}{Hypothesis} 
\newtheorem{thmnum}{Theorem}[section]
\newtheorem{propnum}[thmnum]{Proposition}
\newtheorem{lemnum}[thmnum]{Lemma}
\newtheorem{cornum}[thmnum]{Corollary}
\newtheorem{thmA}{Theorem}
\newtheorem{exA}[thmA]{Example}
\newtheorem{thmB}{Theorem}
\newtheorem{corB}[thmB]{Corollary}
\newtheorem{exB}[thmB]{Example}
\newtheorem{thmK}{Theorem}
\newtheorem{propK}[thmK]{Proposition}
\theoremstyle{definition}
\newtheorem{remark}{Remark}
\numberwithin{equation}{section}
\renewcommand{\epsilon}{\varepsilon}
\newcommand{\R}{\mathbb{R}}
\newcommand{\Z}{\mathbb{Z}}
\newcommand{\Res}{\mathop{\mathrm{Res}}}
\newcommand{\EQNlabel}[2]{
\begin{equation} 
#1
\begin{split}
	#2
\end{split}
\end{equation}
}
\newcommand{\EQN}[1]{
\begin{align*}
	#1
\end{align*}
}
 \renewcommand{\phi}{\varphi}
 \definecolor{green}{rgb}{.4,.7,.4}
\definecolor{grey}{RGB}{211,211,211}
\newcommand{\ep}{\varepsilon}
\newcommand{\con}{\equiv}
\newcommand{\modd}[1]{\; ( \mathrm{mod} \; #1)}
\newcommand{\maps}{\rightarrow}
\newcommand{\Fscr}{\mathscr{F}}
\newcommand{\supp}{{\rm supp \;}}
\newcommand{\al}{\alpha}
\newcommand{\be}{\beta}
\newcommand{\ga}{\gamma}
\newcommand{\del}{\delta}
\newcommand{\om}{\omega}
\newcommand{\Om}{\Omega}
\newcommand{\sig}{\sigma}
\newcommand{\lam}{\lambda}
\newcommand{\Ga}{\Gamma}
\newcommand{\Mcal}{\mathcal{M}}
\newcommand{\C}{\mathbb{C}}
\newcommand{\N}{\mathbb{N}}
\newcommand{\Q}{\mathbb{Q}}
\newcommand{\beq}{\begin{equation}}
\newcommand{\eeq}{\end{equation}}
\title{A guide to Tauberian theorems \\for arithmetic applications}
\author{Lillian  B. Pierce}
\author{Caroline L. Turnage-Butterbaugh}
\author{Asif Zaman}
 \date{\today}
\begin{document}

\begin{abstract}
A Tauberian theorem deduces an asymptotic for the partial sums of a sequence of non-negative real numbers from analytic properties of an associated Dirichlet series. Tauberian theorems appear in a tremendous variety of applications, ranging from well-known classical applications in analytic number theory, to new applications in arithmetic statistics, group theory, and  the intersection of number theory and algebraic geometry. The goal of this article is to provide a useful reference for practitioners who wish to apply a Tauberian theorem. We explain the hypotheses and proofs of two types of Tauberian theorems: one with and one without an explicit remainder term. We furthermore provide counterexamples that illuminate that neither theorem can reach an essentially stronger conclusion unless its hypothesis is strengthened.

\end{abstract}

\maketitle

\author

\begin{center}
   \emph{Dedicated to John B. Friedlander and Henryk Iwaniec \\in honor of their long-standing commitment to exposition.}
\end{center}

\tableofcontents
\section{Introduction}

Let $1 \leq \lambda_1 < \lambda_2 < \lambda_3 < \cdots$ be an   increasing sequence of  real numbers   tending to infinity,  and let $\{a_n\}_{n \geq 1}$ be any  sequence of non-negative real  numbers. A \emph{Tauberian theorem} provides an asymptotic estimate for the partial sum
\EQNlabel{\label{eqn:original-sum}}{
\sum_{\lambda_n \leq x} a_n
}
as $x \maps \infty$ by using analytic information for its associated general Dirichlet series, given as a function of a complex variable $s$  by
\EQNlabel{\label{def:DirichletSeries}}{
A(s) := \sum_{n=1}^{\infty} \frac{a_n}{\lambda_n^s}, 
	}
	well-defined for $s$ in an appropriate right half-plane, and possibly meromorphically continued in a larger region. Most commonly, the sequence $\{\lam_n\}_{n \geq 1}$ is simply $\{ n\}_{n \geq 1}$.  
	
Tauberian theorems have tremendously wide applications in arithmetic settings. Within number theory, classical examples  include the application of a Tauberian theorem to prove the Prime Number Theorem (see \cite[\S 7.3]{BatDia04}), or to 
count the number of integers $n \leq x$ represented as a sum of two squares (see \cite[Thm. 10.5]{BatDia04}). 
Recently, Tauberian theorems have also become an essential tool for deep problems of central interest in algebraic geometry, group theory, and arithmetic statistics. Let us sketch how Tauberian theorems play a role in each of these settings, and then we will turn to describing the main types of Tauberian theorems we explain in this article.

\subsection{Manin's conjecture} First, Manin's conjecture   is a broad area of  current interest at the intersection of number theory and algebraic geometry. It formulates a precise conjecture for the asymptotic behavior of the number of rational points of bounded height (in a Zariski open set, with respect to an anticanonical height function) on an algebraic variety; the asymptotic is considered as the height goes to infinity.
The predicted asymptotic depends on geometric invariants of the variety; the conjecture has been formulated for smooth Fano varieties \cite{BatMan90} \cite{FMT89},  classes of singular Fano varieties \cite{BatTsc98a} or almost-Fano varieties \cite{Pey95}. Verifying Manin's conjecture in specific cases is a topic of great current interest, and there has been notable progress, for example for  toric varieties \cite{BatTsc98b}, certain del Pezzo surfaces  \cite{Bro09,Bro10}, and certain settings with an ergodic flavor  \cite{STT07,GMO08,GorOh11}. But there are many further open questions.

In this setting, the number of rational  points of height at most $x$ is encoded by a   partial sum  $\sum_{n \leq x} a_n$ analogous to (\ref{eqn:original-sum}), and proving the predicted asymptotic for such partial sums depends on verifying certain analytic properties of a height zeta function that plays the role of the Dirichlet series $A(s)$ in (\ref{def:DirichletSeries}). For example, in order to obtain the asymptotic predicted by Manin's conjecture (via a Tauberian theorem), a key question is whether the height zeta function can be meromorphically continued in a certain region. The clear presence of a ``Tauberian theorem'' in the setting of Manin's conjecture is already seen in the early formulation  \cite{FMT89}, and a ``standard Tauberian argument'' is frequently required in papers on  Manin's conjecture (see  \cite{CLT01,CLT02,CLT10,CLT12} as just a few examples). On the other hand, in some cases of recent progress toward Manin's conjecture, the strength of the work lies in going beyond an off-the-shelf application of a Tauberian theorem (see for example \cite{BBS14,BetDes19}, both of which give clear summaries of some state-of-the-art considerations).

\subsection{Subgroup growth}\label{sec_zeta_group}

A second area in which Tauberian theorems have become crucial tools lies in group theory, and in particular the study of the subgroup growth problem, and related questions. Let $G$ be a finitely generated group, such as the free abelian group $\Z^m$ of rank $m$, or the discrete Heisenberg group (a finitely generated nilpotent group, given by $3\times3$ upper triangular matrices with integral entries, and 1's on the diagonal). Then $G$ has only finitely many subgroups of each index. Let $a_n(G)$ denote the number of subgroups of  $G$ of index precisely $n$, and let $S_x(G) = \sum_{n \leq x}a_n(G)$ denote the number of subgroups of  $G$ of index at most $x$. The subgroup growth problem asks: how fast does $S_x(G)$ grow as $x \maps \infty$? A group $G$ is said to have polynomial subgroup growth if  $S_x(G)$ has at most polynomial growth as $x \maps \infty$.

To study this question, Grunewald, Segal, and Smith \cite{GSS88} introduced  a zeta function associated to the group $G$, defined by
\beq\label{zeta_group_dfn}
\zeta_G(s) = \sum_{n=1}^\infty a_n(G) n^{-s} = \sum_{H \leq G} [G:H]^{-s}.
\eeq
This can be thought of as a non-commutative generalization of the Dedekind zeta function of a number field. There is a basic relationship between holomorphicity of $\zeta_G(s)$ and the subgroup growth problem:   $\zeta_G(s)$ is holomorphic in some right-half plane if and only if $S_x(G)$ has at most polynomial growth (see Remark  \ref{remark_zeta_group_convergence}).

To study the question of polynomial subgroup growth among finitely generated groups, it is no loss of generality to restrict attention to  so-called residually finite groups (see for example  \cite[\S 1.1]{Vol11}). In a celebrated result, 
Lubotzky, Mann, and Segal \cite{LMS93}  proved a characterization: among finitely generated residually finite groups, those with polynomial subgroup growth  are precisely  those which  have a subgroup of finite index that is soluble of finite rank. But this still leaves open the quantitative question of  how fast the polynomial growth is, for a given group $G$. Define the abscissa of convergence $\al_G$ of $\zeta_G(s)$ to be 
\beq\label{zeta_group_abscissa_dfn}
\al_G:= \inf \{ \al : \text{$\exists\, c>0 $ such that $S_x(G) \leq c(1+x^\al)$ for all $x \geq 1$}\}.
\eeq
(Thus $\al_G>0$ if  $G$ is an infinite   group with polynomial subgroup growth, while $\al_G = -\infty$ if $G$ is finite.)
It is a central problem  to prove that for suitable infinite finitely generated groups $G$, the rate of polynomial subgroup growth obeys the asymptotic
\beq\label{group_asymp} 
S_x(G) \sim c_G x^{\al_G} (\log x)^{b_G}, \qquad \text{as $x \maps \infty$,}
\eeq
for $\al_G$ the abscissa of convergence and for some real numbers $c_G>0$ and   $b_G \geq 0$. If $G$ is nilpotent, this has been proved in a landmark result of du Sautoy and Grunewald \cite[Thm. 1.1]{SauGru00}: they showed that $\zeta_G(s)$ is holomorphic for $\Re(s)\geq \al_G$ except for a pole at $s=\al_G$ of order $m$ for some (unspecified) integer $m \geq 1$, and from this fact, a Tauberian theorem produced (\ref{group_asymp}), with $b_G=m-1$.  

This work  opened the door to studying the subgroup growth question in more general contexts: for example,    the number of subrings of bounded index in a ring \cite[\S 1.2]{Vol11}, or    the number of bounded-dimensional irreducible complex representations of a rigid group  \cite[\S 3.2]{Vol11}, and further questions for zeta functions of groups and rings  \cite[\S 4]{Vol11}. It can be very difficult to explicitly determine the abscissa of convergence of such a zeta function, and to determine whether it has a meromorphic continuation strictly to the left of the abscissa of convergence (see e.g. \cite{SauWoo08}), but  the final ingredient is an appropriate Tauberian theorem in order to ultimately obtain a result such as (\ref{group_asymp}).

\subsection{Counting number fields} As a third area rich with potential applications for Tauberian theorems, we highlight current themes in arithmetic statistics, and in particular 
  a central and difficult type of question related to counting number fields in a family.  
 For example, a very general conjecture, attributed to Linnik, predicts that if $\mathscr{F}^d(X)$ represents the set of degree $d$ extensions $K/\Q$ with bounded absolute discriminant $|\mathrm{Disc}(K/\Q)| \leq X,$ then 
 \[ |\mathscr{F}^d(X)| \sim c_d X  \qquad \text{as $X \maps \infty$}\]
 for a particular constant $c_d>0$. 
 This is true for $d=2$ by a classical computation (related to counting square-free integers up to $X$  \cite[Appendix]{EPW17}; for $d=3$ by Davenport and Heilbronn \cite{DavHei71}; for $d=4$ by Cohen, Diaz y Diaz and Olivier \cite{CDO02} and Bhargava \cite{Bha05}; for $d=5$ by Bhargava \cite{Bha10a}. 
 It is not known for $d \geq 6$. Furthermore, there are many more  conjectures of a closely-related flavor. For example, let $\mathscr{F}^d(G;X)$ denote the set of degree $d$ extensions $K/\Q$ with bounded absolute discriminant $|\mathrm{Disc}(K/\Q)| \leq X$ and specified Galois group $G$, for a given transitive subgroup $G \subseteq S_d$. That is to say, with all $K$ in a  fixed algebraic closure $\overline{\Q}$,  if $\tilde{K}$ denotes a Galois closure of $K$ over $\Q$,
  the Galois group of $\tilde{K}/\Q $
  (regarded as a permutation group on the $n$ embeddings of $K$ in $\overline{\Q}$)   is isomorphic to $G$  via an isomorphism of permutation groups.
 In this setting, the well-known Malle heuristics \cite{Mal02} predict the order of growth   of $|\mathscr{F}^d(G;X)|$ as $X \maps \infty$, or at least upper and lower bounds (up to factors of $\log X$, as noticed in \cite{Mal04,Klu05}). One can furthermore order a family of fields by other invariants than the discriminant, or  impose finitely many or infinitely many local conditions on the fields in the family, and then ask for the order of growth of the family as $X \maps \infty$. This is a vast area of current research; see for example an overview in \cite{Woo16}.

Suppose now that  $\mathscr{F}(X)$ represents a particular family of degree $d$ number fields $K$ with $|\mathrm{Disc}(K/\Q)| \leq X$. Consider, for example, a standard Dirichlet series defined by  $A(s)=\sum_n a_n n^{-s}$,  in which each coefficient  $a_n \geq 0$  represents the number of fields in the family with precisely $|\mathrm{Disc}(K/\Q)|=n$. Then, the question of computing an asymptotic for $|\mathscr{F}(X)|$ as $X \maps \infty$ corresponds   to the question of computing an asymptotic for a partial sum $\sum_{n \leq X} a_n$, as in (\ref{eqn:original-sum}).  The philosophy of a    ``Tauberian theorem'' suggests  this should be, in turn, a question of verifying certain analytic properties of the Dirichlet series $A(s)$. Verifying such properties is a difficult type of open problem in general, but also a promising avenue: the Malle-Bhargava principle (currently a set of heuristics or questions)  suggests a technique to construct such Dirichlet series, and (potentially) interpret them as a kind of Euler product, which may facilitate verifying their analytic properties. (See Malle \cite{Mal04} and Bhargava \cite{Bha07} as well as \cite[\S 10]{Woo16}.)

Which analytic properties are required? This is the core motivation for the present work. Many aspects of the Malle-Bhargava principle remain unknown; it can be difficult to justify the construction of the associated series $A(s)$, and then difficult to verify its analytic properties. It is desirable to have a Tauberian theorem at hand which assumes only a ``minimal'' hypothesis about $A(s)$. 

At the same time, there are many questions that require quantifying a partial sum $\sum_{\lam_n \leq x} a_n$ more precisely than a ``minimal'' Tauberian theorem can accomplish. In particular, proving an asymptotic for the partial sum (\ref{eqn:original-sum}),  with an explicit remainder term, requires a Tauberian theorem with additional hypotheses---that is to say, more quantitative, analytic information about the Dirichlet series $A(s)$. Proving a power-saving remainder term in an asymptotic is often interesting in its own right---or might reveal a second-order main term (see for example the celebrated works \cite{BBP10,TanTho13}). But a power-saving remainder term can also have ramifications beyond the original problem. 

Here is one such example:
 it is conjectured that for each degree $d$ and each positive integer $D$, for any $\ep>0$ there are at most $\ll_{d,\ep} D^\ep$  number fields of degree $d$ with precisely $|\mathrm{Disc} (K/\Q)| =D$.  
This has been named the Discriminant Multiplicity Conjecture in \cite{PTBW20}, and it is closely bound in a web of deep problems involving class groups and the Cohen-Lenstra-Martinet heuristics, and families of number fields and the Malle heuristics; see the survey \cite{Pie23}.
The Discriminant Multiplicity Conjecture relates to Tauberian theorems: suppose the partial sum $\sum_{n \leq X} a_n $ encodes the count for $|\mathscr{F}^d(X)|$ (so that $a_n$ counts number fields of degree $d$ with discriminant precisely $n$). Then the Discriminant Multiplicity Conjecture predicts that 
 $a_D \ll_{d,\ep} D^\ep$ for every $D \geq 1$ (for all $\ep>0$). 
 
  Information about one term  in a partial sum $\sum_{n \leq X} a_n$  can be extracted from an asymptotic with an explicit remainder term (regardless of whether the coefficients $a_n$ are non-negative). Precisely, suppose   it is known that for  all $X \geq 1$, 
\[ \sum_{n \leq X} a_n  = X^\al + O(X^{\be}),\]
for some $\al \geq 1$ and some $\al-1 \leq \be< \al$. Then by applying this with $X=D$ and $X=D-1$, we can deduce
\beq\label{applications_pointwise}
a_D = O(D^\be).
\eeq
For example,  the best result known toward the Discriminant Multiplicity Conjecture for $d=5$ deduces that there are at most $O(D^{199/200+\ep})$ quintic number fields with $|\mathrm{Disc}(K/\Q)|=D$  from an asymptotic count $|\mathscr{F}^5(X)| = c_5X + O(X^{199/200+\ep})$; see \cite[Thm. 5.1]{EPW17}.
This illustrates the utility of a Tauberian theorem with power-saving remainder term.

\subsection{Types of Tauberian theorem} While the broad outline of a ``Tauberian theorem'' may be familiar,  when faced with a specific application, questions often arise about exactly which hypotheses about the Dirichlet series $A(s)=\sum_{n} a_n \lam_n^{-s}$ are required to arrive at a specific conclusion for the partial sum $\sum_{\lam_n \leq x} a_n$. In fact, Tauberian theorems have been studied extensively for a century, as the titles of the encyclopedic surveys by Korevaar indicate \cite{Kor02,Kor04}. Yet the  vastness of the literature, as well as its heavily analytic perspective,  can make it difficult to recognize the result one seeks for an arithmetic, algebraic, or geometric application.  
 This article focuses on giving a streamlined, accessible reference for two theorems of a Tauberian style that have proven useful in many recent arithmetic applications, and are likely to be used frequently as the fields mentioned above continue to develop.  
 
 We outline two different hypotheses on Dirichlet series, and give an accessible proof for each corresponding Tauberian theorem: version A, with a minimal hypothesis and no explicit remainder term, and version B, with a quantitative hypothesis and an explicit remainder  term.   We also record why caution is needed: in each case, we demonstrate why the hypothesis leads to a conclusion that cannot (essentially) be improved without a stronger hypothesis.
In particular,  we clarify the relationship between  an explicit remainder term in a Tauberian theorem, and a quantitative growth hypothesis for $A(s)$. On this topic, an error was present in a commonly-cited source (Remark \ref{remark_CLT}); clarifying this situation is one of our main motivations.
 Much of what we describe is not new;  we   indicate any source we follow or deviate from, and briefly describe a few more general results in \S \ref{sec_lit}, with accompanying citations.  
 
 With these Tauberian theorems in hand, practitioners with an arithmetic, algebraic, or geometric application in mind will face a further question: how to verify   analytic properties of the relevant Dirichlet series, to confirm that Hypothesis \ref{hyp:Weak} or Hypothesis \ref{hyp:Strong} holds. Useful strategies for accomplishing this, often by comparison to other well-known Dirichlet series, have recently been cataloged by Alberts \cite{Alb24xb}.

\section{Statement of the Tauberian theorems}

 \subsection{Hypothesis \ref{hyp:Kronecker}: to obtain an upper bound}
We begin with an elementary result on general Dirichlet series with complex coefficients, known as Kronecker's lemma. While we do not consider this a Tauberian theorem per se, because it does not produce an asymptotic result, we present it in a parallel format to our main Tauberian theorems, with a hypothesis, theorem, advantages, disadvantages, and limiting-case counterexample. 

  For clarity, when we refer to a general Dirichlet series without assuming its coefficients are real and non-negative, we will  denote it by $D(s)=\sum_n d_n \lam_n^{-s}$, while we reserve the notation $A(s)=\sum_{n} a_n \lam_n^{-s}$ for the case $a_n \geq 0$.  Thus let $\{d_n\}_n$ be a sequence of complex numbers, and let $\{\lam_n\}_n$ be an increasing sequence of real numbers tending to infinity, with $\lam_1 \geq 1$.  

\setcounter{hypothesis}{10}
\setcounter{theorem}{10}

\begin{hypothesis}[for $s$]\label{hyp:Kronecker}
    The general Dirichlet series $D(s) = \sum_n d_n \lam_n^{-s}$ with complex coefficients $\{d_n\}_n$ converges for a given $s \in \C$ with $\Re(s)>0$. 
\end{hypothesis}

 \begin{theorem}[Kronecker's lemma]\label{thm:Kronecker}
Assume Hypothesis \ref{hyp:Kronecker} for a given $s \in \C$ with $\Re(s)>0$, for  a general Dirichlet series  $D(s) = \sum_{n \geq 1}d_n \lam_n^{-s}$ with complex coefficients $\{d_n\}_n$.  Then  
\[\sum_{\lam_n \leq x}d_n = o(x^{\Re(s)}) \qquad \text{as $x \maps \infty.$} 
\]
\end{theorem}
 The proof, which we provide in \S \ref{sec_Kronecker}, is elementary, based merely on the assumed convergence and partial summation (adapted to general Dirichlet series). 

 \subsubsection{Advantages}
 An advantage of this result is that the hypothesis requires only convergence, and allows complex coefficients.
 To obtain an accurate upper bound on the partial sum of coefficients of a general Dirichlet series, it is advantageous to apply Kronecker's lemma for $s$ with $\Re(s)$ as small as possible.
 In particular, if $\al>0$ is given and $D(s)$ is known to have a pole of order $m \geq 1$ at the real point $s=\al$ but to converge for all real $s >\al$, then Kronecker's lemma implies that for all $\ep>0$, 
 \[\sum_{\lam_n \leq x}d_n = o(x^{\al +\ep}) \qquad \text{as $x \maps \infty.$} \]

 \subsubsection{Disadvantages, desiderata, and reasons for caution}
In the setting of Dirichlet series, the real heart of any sophisticated ``Tauberian theorem'' is that it provides not just a reasonable upper bound, but an asymptotic.  A counterexample shows that Hypothesis \ref{hyp:Kronecker} is not strong enough to guarantee such a conclusion. 
Suppose as above that $D(s)$ is known to have a pole of order $m \geq 1$ at the real point $s=\al$ but to converge for all  real  $s >\al$ (and therefore for all $s$ in the open half-plane $\Re(s)>\al$, see \S \ref{sec_convergence}). 
A counterexample shows that a Tauberian theorem that aims to deduce an asymptotic for the partial sum of the coefficients of $D(s)$  must assume in addition some hypothesis that $D(s)$ is ``well-behaved''  in a larger region. Indeed, the following example  shows that at a minimum, the hypothesis must guarantee that $D(s)$ has no pole  on the vertical line $\Re(s)=\al$ other than at $s=\al$. 

\begin{propK}\label{prop_limit_of_Kronecker}
There exists a standard Dirichlet series $A(s) = \sum_n a_n n^{-s}$ with non-negative real coefficients $a_n \in [0,2]$ that is holomorphic for $\Re(s)>1$, and has a meromorphic continuation to $\C$ with only a simple pole at each of $s=1, s=1+i$ and $s=1-i$, yet the limit
\[\lim_{x \maps \infty} \frac{\sum_{n \leq x} a_n}{x} 
\]
fails to exist. This example is given by the coefficients $a_n = 1+\cos(\log n)$ and asymptotically $\sum_{n \leq x} a_n  = x + \tfrac{1}{\sqrt{2}}x \cos (\log x-\pi/4) + O (\log x)$ as $x \maps \infty$.
\end{propK}
This counterexample, which we explain in \S \ref{sec_PNT_counterexample},  has long been known to illustrate an obstacle to an easy proof of the Prime Number Theorem, as it shows that even when the coefficients are non-negative, Theorem \ref{thm:Kronecker} does not suffice to produce an asymptotic.
  Thus we turn to the derivation of Tauberian theorems that   assume an appropriate hypothesis and produce an asymptotic.

\subsection{Hypothesis \ref{hyp:Weak}: to obtain an asymptotic}

\setcounter{hypothesis}{0}
\setcounter{theorem}{0}

Our first main hypothesis for a Tauberian theorem is as follows, for a given real $\al>0$ and integer $m \geq 1$. 

\begin{hypothesis}[for $\al,m$] \label{hyp:Weak}
 The general Dirichlet series $A(s)=\sum_n a_n \lam_n^{-s}$ with non-negative coefficients $\{a_n\}_n$ converges for $\Re(s)>\al$, and in this region 
 $A(s) = g(s)/(s-\alpha)^m + h(s)$ where $g$ is a polynomial of degree at most $m-1$ with $g(\al) \neq 0$ and $h$ is holomorphic in $\Re(s) > \alpha$ and continuous in $\Re(s) \geq \alpha$.
\end{hypothesis}

In particular, this property holds if $A(s)$ can be analytically continued to the region $\Re(s) \geq \alpha$ aside from a pole of order $m \geq 1$ at the real point $s=\alpha$. In that case, $A(s)$ can be analytically continued to satisfy $A(s) = g(s)/(s-\al)^m$ with $g$ holomorphic in $\Re(s) \geq \al$ and $0 \neq g(\al)  = \lim_{s \maps \al} (s-\al)^mA(s)$.

It is useful to make an initial observation. 
By a result of Landau (for general Dirichlet series with non-negative coefficients), Hypothesis \ref{hyp:Weak} implies  that   $A(s)$ converges (and in fact converges absolutely) for each $s$ in the region $\Re(s)>\al$; see \S \ref{sec_Landau}. 
Thus,   under Hypothesis \ref{hyp:Weak}, Kronecker's lemma (Theorem \ref{thm:Kronecker}) shows that for all $\ep>0$, 
 \beq\label{Kronecker_consequence_Hyp_A} \sum_{\lam_n \leq x} a_n = o(x^{\al+\ep}) \qquad \text{as $x \maps \infty.$}
 \eeq

The first  Tauberian theorem we consider deduces an asymptotic from Hypothesis \ref{hyp:Weak}:
 
\begin{theorem} \label{thm:Weak} Assume \cref{hyp:Weak} for a real $\al>0$ and an integer $m \geq 1$.
Then,   
\[ \sum_{\lam_n \leq x} a_n  \sim c x^\al (\log x)^{m-1} \qquad \text{as $x \to \infty$,}\]
in which 
\[ c=\lim_{s \maps \al} \frac{(s-\al)^m A(s)}{\al(m-1)!}.\]
\end{theorem} 
(By convention, $(m-1)!=1$ if $m=1$.)
We prove this in \S \ref{sec_thm_weak} and \S \ref{sec_Delange}.    If $A(s)$ is holomorphic for $\Re(s) \geq \al$ aside from a pole at $s=\al$, the expression $cx^\al (\log x)^{m-1}$ can be interpreted as the leading order term in an expansion of the residue $\Res_{s=\al}( A(s)x^s/s)$; see Remark \ref{remark_residue}.

 As an immediate corollary, under the hypothesis of the theorem, we have
\beq\label{HypA_BigO}
\sum_{\lam_n \leq x} a_n  = O(x^\al(\log x)^{m-1}).
\eeq
This   improves on Kronecker's lemma (Theorem \ref{thm:Kronecker}); see also \S \ref{sec_remark_upper_bound} if only an upper bound of this strength is needed.

We mention three example applications of Theorem \ref{thm:Weak}, with detailed justification and further examples provided in \S \ref{sec_A_corollaries}.  

\begin{exA}\label{cor_HypA_PNT}
Let $\zeta(s)$ denote the Riemann zeta function, and let 
\[A(s) = -\frac{\zeta'(s)}{\zeta(s)} = \sum_{n=1}^\infty \frac{\Lambda(n)}{n^s},\]
in which the von Mangoldt function $\Lambda(n)$ evaluates to $\log p$ when $n=p^k$ for some $k\in \mathbb{Z}^+$ and $0$ otherwise. Then $\sum_{n \leq x} \Lambda(n) \sim x$ as $x \maps \infty$, so that $\pi(x) = \# \{ p \leq x: \text{$p$ prime}\} \sim x/\log x$ as $x \maps \infty$. 
\end{exA}

The second example is a theorem of du Sautoy and Grunewald \cite[Thm.1.1]{SauGru00}.
\begin{exA}\label{cor_HypA_dSG}
  Let $G$ be an infinite, finitely generated, nilpotent  group. There exists a rational number $\al_G>0$, a real number $c_G>0$, and an integer $b_G \geq 0$ such that 
  \[ \#\{\text{subgroups of $G$ of index at most $x$}\} \sim c_G x^{\al_G}(\log x)^{b_G} \qquad \text{as $x \maps \infty$.}\]
\end{exA}

The third example application is a special case of a result of Kl\"uners \cite[Lemma 2.2]{Klu22}.
\begin{exA}\label{cor_HypA_Klu}
Let $k$ be a number field, and for each prime ideal $\mathfrak{p}$ in the ring of integers $\mathcal{O}_k$, let $\mathrm{Nm}(\mathfrak{p}) =|\mathcal{O}_k/\mathfrak{p}|$ denote the norm (an integer, as the cardinality of an associated finite field). Let $\ell$ be a rational prime, and $L$ the set of prime ideals $\mathfrak{p}$ of $k$ such that $\mathrm{Nm}(\mathfrak{p}) \con 0$ or $1 \modd{\ell}$. Fix  integers  $d,e \geq 1$ and  let $m =e\cdot [k(\zeta_\ell):k],$ in which $\zeta_\ell$ is a primitive $\ell$-th root of unity. Define  a Dirichlet series with non-negative coefficients $a_n$ by
\[A(s) = \sum_{n=1}^\infty \frac{a_n}{n^s} = \prod_{\mathfrak{p} \in L}\left(1 + \frac{m}{\mathrm{Nm}(\mathfrak{p})^{ds}}\right).\]
Then there is a constant $c=c(k,\ell,d,e) >0$ such that as $x \maps \infty$,
\[ \sum_{n \leq x} a_n  \sim  c  x^{1/d} (\log x)^{e-1}.\]
\end{exA}
Finally, in Theorem \ref{thm_hyp_weak_counterex} below, we provide an example of a Dirichlet series for which Hypothesis \ref{hyp:Weak} is satisfied, but a stronger hypothesis fails.

Hypothesis \ref{hyp:Weak} does not assume any property of the series to the left of the abscissa of convergence.   
In \cref{cor_HypA_PNT}, the function $A(s)$ is known to be holomorphic for $\Re(s) \geq 1$ aside from a pole of order $1$ at $s=1$, but there is no  $\del>0$  for which $(s-1)A(s)$ is known (unconditionally) to be holomorphic in the larger half-plane $\Re(s) > 1-\del$. (This is because   $\zeta'(s)/\zeta(s)$ has a pole of order 1 at any nontrivial zero of the Riemann zeta function; of course if the Riemann Hypothesis is assumed, then $(s-1)A(s)$ is holomorphic for $\Re(s)>1/2.$) Similar remarks apply to  \cref{cor_HypA_dSG}, due to the use of Artin $L$-functions to obtain the meromorphic continuation of the relevant zeta function $\zeta_G(s)$  (see \S \ref{sec_A_corollaries_group}). 
 If a Dirichlet series has multiplicative coefficients, such as Example \ref{cor_HypA_Klu}, alternative strategies are also possible (see \S \ref{sec_Wirsing}).
For  a Dirichlet series that is known to have holomorphic extensions to a larger half-plane, there is potential to extract explicit remainder terms, and not just the leading term in an asymptotic,  a topic we will pursue momentarily with the stronger Hypothesis \ref{hyp:Strong}.

\subsubsection{Relation to previous literature}

In previous literature, statements closely related to Theorem \ref{thm:Weak} are typically stated under a hypothesis,  stronger than Hypothesis \ref{hyp:Weak}, that $A(s)$ has non-negative coefficients and can be analytically continued to $\Re(s) \geq \al$ aside from a pole of order $m$ at $s=\al$. When the pole is simple ($m =1$), such a theorem was developed by Ikehara \cite{Ike31}, building on techniques of Wiener in analysis, and also presented in Wiener's monumental survey \cite[\S 10]{Wie32}; this has led to the terminology ``Wiener--Ikehara method.'' When $A(s)$ possesses a higher-order pole $(m \geq 2)$, or even more general types of singularities (for example, non-integral $m$), celebrated work of Delange  proved asymptotics in   great generality; see the original proof in \cite{Del54} and a delightful survey with arithmetic applications in the lecture series \cite{Del55}. Delange referred to such results as generalizations of Ikehara's theorem (in both the title of \cite{Del54} as well as \cite[p. 60]{Del55}), leading to the terminology ``Ikehara-Delange method,'' even though Delange's work is vastly more general than the original Ikehara theorem. For a list of classical papers in this area, see \cite[p. 309-310]{Nar00}.

Under this stronger hypothesis,
statements closely related to Theorem \ref{thm:Weak} are available in textbooks (but only for standard Dirichlet series, while we  consider general Dirichlet series). For example, \cite{BatDia04} presents the Wiener--Ikehara method. In that source, Section 7.2 presents a complete proof for $m=1$, while Section 7.4 considers higher order poles (and indeed other types of singularities), but the method of proof is only hinted at for   poles of order $m \geq 2$. Alternatively, Chapter III \S 3 of Narkiewicz \cite{Nar83} develops the Ikehara--Delange method to obtain a result that is applicable to even further types of singularities (see also \S \ref{sec_lit}); that treatment essentially exposes the celebrated method of Delange \cite{Del54,Del55}. A strength of the theorem recorded in \cite[Thm. 3.8]{Nar83} is its generality, but in the spirit of exposing the conceptual role   of  Hypothesis \ref{hyp:Weak}, we present a complete proof of Theorem \ref{thm:Weak} that retains only the aspects of \cite[Thm. 3.8]{Nar83} that are necessary for our setting. In addition,  the presentation of the proof in \cite[Thm. 3.8]{Nar83}  appears to have  a gap, which we correct (Remark \ref{remark_Narkiewicz}).  

Finally, Theorem \ref{thm:Weak} strengthens the results above, by working with the weaker Hypothesis \ref{hyp:Weak} in order to recover a result attributed to unpublished notes of H. Stark (see Corollary \ref{cor_thm_Delange_Stark} and Remark \ref{remark_Stark}).

 \subsubsection{Advantages}
  Hypothesis \ref{hyp:Weak} does not assume knowledge that $A(s)$ can be meromorphically continued in any region to the left of $\Re(s) \geq \al$, or  any control of the size of $A(s)$ in vertical strips. This can be an advantage if such knowledge is not available. 

\subsubsection{Disadvantages, desiderata, and reasons for caution}\label{sec_HypA_caution}

Note that Hypothesis \ref{hyp:Weak} requires that $A(s)$ has no   pole on the line $\Re(s)=\al$ other than a pole at $s=\al$. There are Dirichlet series with non-negative coefficients and infinitely many poles on a vertical line; see for example $A(s) = \prod_p (1-2p^{-s})^{-1}$, which has a (right-most) pole of order $3$ at $s=1$, as well as infinitely many (right-most) poles on the line $\Re(s)=1$ (all of which are due to the factor for $p=2$). See \cite{Gro56}, or more recently an explanation in \cite[Ex. 5.5]{Alb24xb}. Thus it is not a candidate for Theorem \ref{thm:Weak}. Recall also the example in Proposition \ref{prop_limit_of_Kronecker}.

To say that asymptotically $f(x) \sim g(x)$ as $x \maps \infty$ is to say that $f(x)=g(x) + o(g(x))$ as $x \maps \infty$; we will call such a remainder  term \emph{asymptotically smaller}. It would be stronger to obtain a remainder  term that is $O(g(x)/\log x)$ as $x \maps \infty$, which we will call  \emph{log-saving}. Finally, it would be stronger still to obtain a remainder  term that is $O(g(x)/x^{\del_0})$ as $x \maps \infty$ for a fixed $\del_0>0$, which we will call \emph{power-saving}.

One rule-of-thumb formulation of ``a Tauberian theorem'' is to state that under ``appropriate hypotheses'' on a Dirichlet series $A(s)=\sum_{\lam_n \geq 1} a_n \lam_n^{-s}$ with $a_n \geq 0$ and a pole of order $m$ at the real point $s=\al>0$, asymptotically 
\beq\label{full_poly_asymp} 
\sum_{\lam_n \leq x} a_n = x^\al P_{m-1}(\log x) +o(x^\al), \qquad \text{as $x \maps \infty,$}
\eeq 
   for a   polynomial $P_{m-1}$ of degree $m-1$ with real coefficients. 
 This is strictly stronger than the conclusion of Theorem \ref{thm:Weak}, in which the main term corresponds only to the contribution of the leading order term in $P_{m-1}(\log x)$, and the remainder  term is only asymptotically smaller. Extracting an asymptotic with any lower-order terms in $P_{m-1}$ is not guaranteed, if only   Hypothesis \ref{hyp:Weak} is assumed. To make this clear, we provide an example, for which we provide full details in \S \ref{sec_hyp_weak_counterex} (and further study in Theorem \ref{thm_hyp_weak_counterex_vertical_growth}).

\begin{thmA}
\label{thm_hyp_weak_counterex}
  There is a Dirichlet series $A(s) = \sum_{n=1}^\infty a_n n^{-s}$ with $a_n \geq 0$ that is absolutely convergent for $\Re(s) >1$ and extends holomorphically to $\Re(s) \geq 1$ aside from precisely one pole of order 2 at $s=1$, and such that 
\beq\label{no_log_savings}
\sum_{n \leq x} a_n  = x \log x + x \left( \tfrac{1}{2} \sin (\log ^2 (x)) - 1\right) + O(x/\log x), \qquad \text{as $x \maps \infty$.}
\eeq
In particular, the statement $\sum_{n \leq x} a_n = x P_1(\log x) + o(x)$ for a linear polynomial $P_1(u) = a_1u + a_0$ is false.
This example is defined by the coefficients $a_n = (\log n)(1+\cos (\log^2 n))$.
\end{thmA}

This example is recorded in Tenenbaum \cite[Ch. II \S 7.4 Eqn. (22)]{Ten15} which credits Karamata (c. 1952); see also   \cite[Prob. 7.5]{BatDia04} and \cite{JS21}. 
In particular, this shows that in general Hypothesis \ref{hyp:Weak} is  not sufficient to prove (\ref{full_poly_asymp}), which in this case would have taken the form $x(\log x) + c_0x + o(x)$ for some constant $c_0$.   Further examples of   functions (not necessarily Dirichlet series) that expose the limitations of Hypothesis \ref{hyp:Weak} are also provided in \S \ref{sec_hyp_weak_counterex_further}.

\subsection{Hypothesis \ref{hyp:Strong}: to obtain an asymptotic with power-saving remainder}
In some applications, it is essential to achieve a log-saving or power-saving remainder  term. We now state a stronger hypothesis, in order to obtain such improved asymptotics.
Fix real numbers $\al>0,$ $\delta \in (0,\alpha), \kappa \geq 0,$ and $m \in \Z^+$.

\begin{hypothesis}[for $\al,\del,\kappa,m$] \label{hyp:Strong} 
	 The general Dirichlet series $A(s)=\sum_n a_n \lam_n^{-s}$ with non-negative coefficients $\{a_n\}_n$ can be analytically continued to the region $\Re(s) \geq \alpha-\delta$ aside from a pole of order $m$ at the real point $s=\alpha$.  Moreover, for some $M_1,M_2 \geq 0$, 
     \beq\label{PH_prep_hyp_B} |(s-\al)^mA(s)| \leq M_1 \exp(|s|^{M_2}) \quad \text{for all $s$ in the strip $\al -\del \leq \Re(s) \leq \al$,}
     \eeq
     and there exists   $C  \geq 1$ such that for $\Re(s) = \alpha-\delta$,
			\beq\label{hypStrong_growth_condition}
			|A(s)| \leq C (1+|\Im(s)|)^{\kappa} ( \log(3+|\Im(s)|) )^{m-1}.
			\eeq

\end{hypothesis}
\noindent

\begin{theorem} \label{thm:Strong}
	Assume \cref{hyp:Strong} for $\al,\del,\kappa>0,m$. If $x \geq 2$ then
	\EQN{
	\sum_{\lambda_n \leq x} a_n =  \Res_{s=\alpha} \Big[ A(s) \frac{x^s}{s} \Big] + O_{\al,\del,\kappa,m,C,A(\alpha+\delta)}\Big(   x^{\alpha - \frac{\delta}{\kappa+1}} (\log x)^{m-1} \Big).
	}
\end{theorem}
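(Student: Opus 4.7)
The plan is to apply a truncated Perron contour argument. Fix $c := \alpha + 1/\log x$ and a truncation height $T \geq 2$ to be chosen later. By Landau's theorem (applicable since the coefficients $a_n$ are non-negative and $A(s)$ is meromorphic with rightmost pole at $s = \alpha$), the series $A(s)$ converges absolutely on $\Re(s) = c$, so the effective Perron formula for a general Dirichlet series gives
\[ \sum_{\lambda_n \leq x} a_n \;=\; \frac{1}{2\pi i}\int_{c-iT}^{c+iT} A(s) \frac{x^s}{s}\, ds \;+\; E_{\mathrm{Per}}(x,T), \]
where $E_{\mathrm{Per}}$ is an explicit error dominated by contributions of coefficients $a_n$ with $\lambda_n$ close to $x$. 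Combining the coarse bound (\ref{HypA_BigO}) supplied by Theorem \ref{thm:Weak} (applicable because Hypothesis \ref{hyp:Strong} implies Hypothesis \ref{hyp:Weak}) with a dyadic decomposition---and, if the sequence $\{\lambda_n\}$ clusters near $x$, a mild smoothing of the cutoff at scale $x/T$---one obtains $E_{\mathrm{Per}} \ll x^\alpha (\log x)^{m-1}/T$.

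Next, I would shift the contour from $\Re(s) = c$ to $\Re(s) = \alpha - \delta$. Since $\alpha - \delta > 0$ and the only singularity of $A(s)x^s/s$ in the intervening strip is the order-$m$ pole at $s = \alpha$, Cauchy's residue theorem yields
\[ \frac{1}{2\pi i}\int_{c-iT}^{c+iT} A(s) \frac{x^s}{s}\, ds \;=\; \Res_{s=\alpha}\!\left[A(s)\frac{x^s}{s}\right] + I_{\mathrm{L}} - I_{\mathrm{T}} - I_{\mathrm{B}}, \]
where $I_{\mathrm{L}}$ is the integral along $\Re(s) = \alpha - \delta$ from $-iT$ to $+iT$, and $I_{\mathrm{T}}, I_{\mathrm{B}}$ are the horizontal segments at heights $\pm T$. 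On $s = \alpha - \delta + it$, the factor $|(s+\alpha)^m/(s-\alpha)^m|$ is bounded by a constant depending only on $\alpha, \delta, m$, so Hypothesis \ref{hyp:Strong} directly gives $|A(s)| \ll (1+|t|)^\kappa (\log(3+|t|))^{m-1}$; since $|s| \gg 1+|t|$, this produces $|I_{\mathrm{L}}| \ll x^{\alpha - \delta} T^\kappa (\log T)^{m-1}$.

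For the horizontal pieces, I would control $A(s)$ inside the strip by a Phragm\'en--Lindel\"of convexity estimate interpolating between Hypothesis \ref{hyp:Strong} on $\Re(s) = \alpha - \delta$ and the absolute convergence bound $|A(s)| \leq A(c) \ll (\log x)^m$ on $\Re(s) = c$ (where the $m$ logarithms arise from the pole at $s = \alpha$ of order $m$). The resulting convexity bound has exponent linear in $\Re(s)$, so the $\sigma$-integral along each horizontal segment is dominated by the right endpoint and further improved by a factor of $1/|\log x|$ from the inverse derivative of the linear exponent. Consequently each horizontal contribution is $O(x^\alpha (\log x)^{m-1}/T)$.

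Adding all pieces gives a total remainder of order $x^{\alpha - \delta} T^\kappa (\log T)^{m-1} + x^\alpha (\log x)^{m-1}/T$. The choice $T := x^{\delta/(\kappa+1)}$ balances the two summands, and since $\log T \asymp \log x$ for this choice it produces the stated bound $O(x^{\alpha - \delta/(\kappa+1)} (\log x)^{m-1})$. The principal obstacle is the bookkeeping of logarithmic factors in the horizontal step: a naive convexity bound yields $(\log x)^m$ instead of $(\log x)^{m-1}$, and the sharp exponent requires combining the endpoint dominance of the $\sigma$-integral with the explicit logarithmic factor in the hypothesis. A secondary technicality is controlling the Perron error uniformly in the general exponent sequence $\{\lambda_n\}$, which may require the mild smoothing alluded to above.
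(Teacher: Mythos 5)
Your proposal takes a genuinely different route in its framing: a sharp-cutoff truncated Perron formula with a contour shift and a Phragm\'en--Lindel\"of estimate on the horizontal segments, whereas the paper smooths from the outset using majorants and minorants (\cref{lem:weight}), applies exact Mellin inversion (\cref{lemma_Mellin_inversion}), and unsmooths by sandwiching. The contour shift, the convexity estimate in a strip (compare \cref{lemma_convexity}), the bound for the left vertical line, and the balancing $T = x^{\delta/(\kappa+1)}$ are essentially shared by both; your observation that endpoint dominance and the factor $1/\log x$ from integrating $x^\sigma$ saves the last logarithm on the horizontal segments is a correct and somewhat different way to handle that step.

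There is, however, a genuine gap in the claimed bound $E_{\mathrm{Per}} \ll x^\alpha(\log x)^{m-1}/T$. With a sharp cutoff, the effective Perron error has a global part from $\lambda_n$ far from $x$ and a local part from $\lambda_n$ near $x$. The global part is $\ll x^c A(c)/T$, and with $c = \alpha + 1/\log x$ one has $A(c) \asymp (\log x)^m$, i.e.\ one logarithm too many; applying partial summation with (\ref{HypA_BigO}) (or the dyadic decomposition you suggest) improves this only to $(\log x)^{m-1}\log\log x/T$, which still pollutes the conclusion. More seriously, the local part is a short-interval sum $\sum_{|\lambda_n - x| \ll x/T} a_n$, and (\ref{HypA_BigO}) gives nothing useful for such a difference of partial sums---to control it one would need an asymptotic with explicit remainder, which is exactly the statement being proved, so the argument is circular. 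For a general exponent sequence $\{\lambda_n\}$ this local part is not controllable at all with a sharp cutoff, since the $\lambda_n$ may cluster arbitrarily near $x$. The ``mild smoothing at scale $x/T$'' you invoke is therefore not mild: once the cutoff is smoothed the truncated Perron formula is replaced by an exact Mellin inversion (and the global Perron error disappears), the residue then depends on the smooth weight, and the asymptotic for the sharp sum $\sum_{\lambda_n \leq x} a_n$ is recovered only by constructing a majorant $\varphi^+$ and a minorant $\varphi^-$, applying the smoothed identity to each, taming the weight-dependent residue via \cref{lem:weight}\ref{weight_ppty_approx_id}, and sandwiching using the non-negativity of $a_n$. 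This sandwich is where non-negativity enters in an essential way, and your proposal never articulates it; without it you obtain an asymptotic for a single smoothed sum, not for $\sum_{\lambda_n\leq x}a_n$. In short, the ``mild smoothing'' hides the entire technical content of the paper's \S\ref{sec_B_proof}.
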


We prove this in \S \ref{sec_B_prelim} and \S \ref{sec_B_proof}.  While  \cref{thm:Strong} may not traditionally be referred to as a Tauberian theorem, we refer to it this way due to its relation to \cref{thm:Weak}. Ideas behind its proof date back to Landau \cite{Landau1912,Landau1917,Landau1918}, for example, and are described in \S\ref{sec_B_prelim}.

 It is worth remarking on dependence on the parameters: for all work under Hypothesis \ref{hyp:Strong} we allow implicit constants to depend only on  $\al,\del,\kappa,m, C,$ and $A(\alpha+\delta)$, but not otherwise on $A(s)$ or $(a_n)_n$ or $(\lambda_n)_n$. (We remark that the conclusion of the theorem does not depend on the constants $M_1,M_2$ in (\ref{PH_prep_hyp_B}) since this condition is only used in an application of the Phragm\'{e}n-Lindel\"of theorem (Lemma \ref{lemma_PH}), whose conclusion is independent of $M_1,M_2$.)
Hypothesis \ref{hyp:Strong} could have been posed with (\ref{hypStrong_growth_condition}) replaced by the condition
\beq\label{hypStrong_growth_condition_alternate}
|A(s)| \leq C' (1+|\Im(s)|)^{\kappa'}  ,
\eeq
which does not include logarithmic factors.
Certainly, if (\ref{hypStrong_growth_condition}) holds for $\kappa$ then (\ref{hypStrong_growth_condition_alternate}) holds for any $\kappa'>\kappa$. But we have used the formulation (\ref{hypStrong_growth_condition}) in order to facilitate the specific remainder term we obtain (which is useful in formulating Theorems \ref{thm:Example} and \ref{thm:Example_bounded}, below).
Observe the role of the growth parameter $\kappa$ in the remainder term of Theorem \ref{thm:Strong}: as $\kappa \maps 0$, up to logarithmic factors, the remainder term approaches $O(x^{\al - \del})$, which is the remainder term that can be achieved for a smoothed sum (see (\ref{smooth_argument_conclusion_rescaled})). (In Remark \ref{remark_kappa_zero} we verify Theorem \ref{thm:Strong} also for $\kappa=0$, but with a remainder term of $O(x^{\al-\del}(\log x)^{m})$.)

The  residue in the theorem statement may be interpreted as $x^\al P_{m-1}(\log x)$ for $P_{m-1}$ a polynomial of degree $m-1$ with real coefficients, as we compute next. 

 \begin{remark}[Residue calculation]\label{remark_residue}
Suppose $A(s) = g(s)(s-\al)^{-m}$ for a function $g$ that is holomorphic for $\Re(s) \geq \al>0$ with $g(\al) \neq 0$. Then
  \begin{align*}
 \Res_{s =\alpha} \Big[ \frac{A(s) x^{s }}{s} \Big]
 &= \frac{1}{(m-1)!}\lim_{s \maps \al}\left(\frac{d}{ds}\right)^{m-1}  \left[ \frac{g(s)}{s} x^s\right]\\
 & = \frac{1}{(m-1)!}\lim_{s \maps \al}  \sum_{\ell=0}^{m-1} { m-1 \choose \ell} \left(\frac{d}{ds}\right)^{m-\ell-1}\left[\frac{g(s)}{s}\right] \left(\frac{d}{ds}\right)^{\ell}[x^s]  \\
  & = d_{m-1}x^\al(\log x)^{m-1}  + x^\al   \sum_{\ell=0}^{m-2} d_\ell (\log x)^\ell  =: x^{\alpha} P_{m-1}( \log x)
    \end{align*}
    in which  the polynomial $P_{m-1}(y)=d_{m-1}y^{m-1} + \cdots + d_1y + d_0$ is defined by coefficients
     \[ d_{m-1} = \frac{g(\al)}{\al(m-1)!}  = \lim_{s \maps \al} \frac{(s-\al)^m A(s)}{\al(m-1)!}\]
     and 
     \[d_\ell = \frac{1}{\ell! (m-\ell-1)!} \lim_{s \maps \al} \left(\frac{d}{ds}\right)^{m-\ell-1}\left[\frac{g(s)}{s}\right], \qquad 0 \leq \ell \leq m-2.\]
    In the context of Theorem \ref{thm:Weak}, only the first term in this expansion appears in the asymptotic of  the theorem statement. In the context of Theorem \ref{thm:Strong}, the asymptotic takes the form
$ x^\al P_{m-1}(\log x) +O(x^{\al - \del'})$ for some $\del'>0$.

 \end{remark}  

We provide two example applications, which we deduce from Theorem \ref{thm:Strong} in \S \ref{sec_B_corollaries}.  The first is  purely instructive and builds on \cite[Ex. 2.1]{Alb24xb}.
 \begin{exB}\label{cor:Strong_1}
 Define a Dirichlet series with non-negative coefficients by 
  \[ A(s) = \prod_p (1+2p^{-s})  = \sum_{n \geq 1} \frac{a_n}{n^s}.\]
  Then for any $\ep>0$,
\[\sum_{n \leq x} a_n =  x P_1(\log x)+O_{\epsilon}(x^{2/3+\ep})\]
for a  real polynomial $P_1$ of degree $1$ with leading coefficient 
$\prod_p (1-3p^{-2}+2p^{-3})$.
 \end{exB}
This example is directly related to the Dirichlet divisor problem, so the error term can be improved along the same lines   as the hyperbola method giving $O_{\epsilon}(x^{1/2+\epsilon})$; see e.g. \cite[Eqn. (1.75)]{IwaKow04} or \cite[Ch. 8 \S 3]{SSFour}. Our only aim here is to illustrate how to apply \cref{thm:Strong} in a simple situation.

 The second example stems from   \cite[Prop. 2.1]{PTBW20}.
 Let $G$ be a cyclic group of order $d \geq 2$. Let $F_d(\Q,G)$ denote the set of cyclic extensions of $\Q$ of degree $d$ (inside a fixed algebraic closure $\overline{\Q}$) with Galois group $\simeq G$.   Let $F^*_d(\Q,G)$ denote the subset of $F_d(\Q,G)$ comprised of fields $K$ with the property that every rational prime that ramifies tamely in $K$ is totally ramified in $K$.  
 The number of fields in $F_d(\Q,G)$ with absolute discriminant at most $x$ (in absolute value) is asymptotically  $c_d x^{\frac{1}{d-d/\ell}}$ for a constant $c_d>0$, in which $\ell$ is the smallest prime divisor of $d$; see references and related comments in \cite[\S 2.1]{PTBW20}. If $d$ is prime, then $F^*_d(\Q,G)=F_d(\Q,G)$. If  $d$ is composite, a Tauberian theorem shows that $F^*_d(\Q,G)$ has density zero in $F_d(\Q,G)$ (when ordered by discriminant). 
 
  \begin{exB}\label{cor:Strong_2}
   Let $G$ be a cyclic group of order $d \geq 2$.
 Define a Dirichlet series $A(s)$ with non-negative coefficients by setting $A(s) = \sum_{n \geq 1} a_n n^{-s}$ in which $a_1=1$ and for each integer $n \geq 2$, $a_n$ denotes $|\mathrm{Aut}(G)|$ times the number of fields $K \in F^*_d(\Q,G)$ with $|\mathrm{Disc}(K/\Q)| = n.$ 
 Then 
 \[ \sum_{n \leq x} a_n  = c_d' x^{\frac{1}{d-1}} + O(x^{\frac{1}{d-1} - \del' + \ep})\]
 for a real constant $c_d'>0$,   $\del' = \frac{2}{(d-1)(\phi(d)+4)}$, and any $\ep>0$.
 Consequently $F^*_d(\Q,G)$ contains asymptotically $c_d'' x^{1/(d-1)}+o( x^{1/(d-1)-\del'+\ep})$ fields of discriminant $\leq x$ as $x \maps \infty$.
 \end{exB}

\subsubsection{Relation to previous literature}
We provide a complete proof of Theorem \ref{thm:Strong} in \S \ref{sec_B_prelim} and \S \ref{sec_B_proof}. Brumley, Lesesvre and Mili\'{c}evi\'{c} provide a clear proof of a result of similar strength, but only in the case $m=1$, and only in the case of a standard Dirichlet series; see the recent preprint \cite[Appendix B]{BLM21x}. Alberts also provides a complete proof of a similar, useful result in \cite[Thm. 6.1]{Alb24xa}, based on a   theorem of Roux \cite{Rou11}.
We do not follow either method   (although in spirit they are similar). Instead, we provide an explicit argument involving \emph{smoothing and  unsmoothing}, which practitioners may find useful when they need to adapt the ideas for bespoke Tauberian theorems in other contexts.

 One commonly-cited source for a result based on Hypothesis \ref{hyp:Strong} is
Theorem A.1 in \cite[Appendix A]{CLT01}; but there is a gap in the proof provided there, and that theorem as stated is false. (The stated explicit remainder term is too strong to be guaranteed by the assumed hypothesis; see Remark \ref{remark_CLT}.)    Alberts has also recently pointed out in \cite[\S 6.4]{Alb24xa} misconceptions about power-saving remainder terms in other recent literature that has applied Tauberian theorems.  To clarify the important correspondence between the quantitative growth bounds in Hypothesis \ref{hyp:Strong} and power-saving remainder terms, we provide limiting counterexamples in Theorem \ref{thm:Example} and Theorem \ref{thm:Example_bounded} below.  

\subsubsection{Advantages}

One advantage of an explicit remainder term is that it allows the estimation of an individual coefficient $a_n,$ by expressing it as a difference of two partial sums and applying Theorem \ref{thm:Strong} to each. We record this corollary, which we prove in \S \ref{sec_B_proof},  as:
\begin{corB} \label{cor:Pointwise}
  Assume \cref{hyp:Strong} for $\al,\del,\kappa,m$.	 For each integer $n \geq 2$, 
  \[ 0 \leq a_n \ll \lambda_n^{\alpha- \frac{\delta}{\kappa+1}} (\log  \lambda_n)^{m-1}.\] 
\end{corB}
This corollary emphasizes that in applications where bounding $a_n$ pointwise is the goal, it can be desirable to obtain as small a power-saving remainder term as possible. Comparing Theorem \ref{thm:Strong} with Theorem \ref{thm:Example} (below)   suggests the importance of verifying Hypothesis \ref{hyp:Strong} for $\delta$ as large as possible and  $\kappa$ as small as possible.

\subsubsection{Disadvantages, desiderata, and reasons for caution}
While Hypothesis \ref{hyp:Strong} allows a pole of any order, there are applications that require consideration of other singularity types; we provide   brief references   for other singularity types in \S \ref{sec_lit}.
 
The remainder term stated in Theorem \ref{thm:Strong} saves   $\del/(\kappa+1)$ in the exponent of $x^{\al}$. Can this be improved under Hypothesis \ref{hyp:Strong}?
Our next result clarifies that the best possible savings is at most $\del/(\kappa+1/2)$, which suggests \cref{thm:Strong} is nearly optimal for large $\kappa$ values.  
\begin{thmB} \label{thm:Example}
Given $\alpha > 0, \delta \in (0,\alpha)$, $\kappa > \frac{1}{2}$, and $m \in \Z^+,$ there exists an increasing sequence   $\{\lambda_n\}_{n \geq 1}$ of real numbers tending to infinity with $\lam_1 \geq 1$, and a  sequence of non-negative real numbers $\{a_n\}_{n \geq 1}$ defining a general  Dirichlet series $A(s) = \sum_{n=1}^{\infty} a_n \lambda_n^{-s}$ which satisfies all of the following:
	\begin{enumerate}[(i)]
		\item The Dirichlet series $A(s)$ satisfies \cref{hyp:Strong} with   $\alpha, \delta, \kappa,$ and $m$. 

        \medskip 

		\item As $n \to \infty$,  $ a_n = \Omega\Big( \lambda_n^{\alpha-\frac{\delta}{\kappa+1/2}} (\log \lambda_n)^{m-1}  \Big)$.

        \medskip
		
		\item 	As $x \to \infty$, $\displaystyle \sum_{\lambda_n \leq x} a_n = \Res_{s=\alpha} \Big[ A(s) \frac{x^s}{s} \Big]  + \Omega\Big( x^{\alpha - \frac{\delta}{\kappa+1/2}} (\log x)^{m-1} \Big). $

	\end{enumerate}

\end{thmB}

  Here, as is standard in analytic number theory, the notation $f(x)=\Omega(g(x))$ indicates that $\limsup_{x \maps \infty}|f(x)/g(x)|>0.$
As we remark in the construction provided in  \S \ref{sec_example_unbounded} to prove this theorem,  if  $(\kappa+1/2)/\del \in \N$, then $\lam_n$ are natural numbers so that $A(s)$ is a standard Dirichlet series. 

Regarding (ii), this $\Omega$ result can be compared to the previous result $ a_n \ll  \lambda_n^{\alpha-\frac{\delta}{\kappa+1}} (\log \lambda_n)^{m-1}$ of  Corollary \ref{cor:Pointwise}.  For a fixed  value of $\kappa$, comparing Corollary \ref{cor:Pointwise} and Theorem \ref{thm:Example} leaves an open question of whether  there is potentially room to improve Corollary \ref{cor:Pointwise} by a factor of $\lambda_n^{\frac{\delta}{(\kappa+1)(2\kappa+1)}}$ (and likewise room to improve \cref{thm:Strong} by  a factor of $x^{\frac{\delta}{(\kappa+1)(2\kappa+1)}}$). For large values of $\kappa$, this room for improvement is relatively small, so for such applications, the savings provided by \cref{thm:Strong} and \cref{cor:Pointwise} is close to optimal without assuming stronger analytic properties of $A(s)$.

The coefficients $a_n$ in the particular example provided by Theorem \ref{thm:Example} are unbounded. In some situations, one might be studying a Dirichlet series for which one knows a priori that the coefficients are bounded, so it is reasonable to ask: can the remainder term in \cref{thm:Strong} be improved under \cref{hyp:Strong} for Dirichlet series with bounded coefficients? Our next theorem, proved in \S \ref{sec_example_bounded}, gives a negative answer by developing a more elaborate example in which the individual coefficients are bounded.
\begin{thmB} \label{thm:Example_bounded}
Under the hypotheses of Theorem \ref{thm:Example}, the same conclusion holds with (ii) replaced by the statement 
	\begin{enumerate}[(i*)]
	\setcounter{enumi}{1}
	\item \text{For all $n \geq 1$, $0 \leq a_n \leq 1$.}
	\end{enumerate}
\end{thmB}
 While the proof of \cref{thm:Example} constructs a standard Dirichlet series  whenever $(\kappa+1/2)/\delta$ is a positive integer,   the proof of \cref{thm:Example_bounded} does not appear to produce a standard Dirichlet series.

  Finally, we mention an example of a Dirichlet series that does not satisfy Hypothesis \ref{hyp:Strong}; we provide the proof in \S \ref{sec_hyp_weak_counterex}.
\begin{thmB}\label{thm_hyp_weak_counterex_vertical_growth}
The Dirichlet series $A(s)$ constructed in Theorem \ref{thm_hyp_weak_counterex} extends meromorphically to $\Re(s) >0$, and in this region the only singularity is  precisely the pole of order 2 at $s=1$.
 Moreover, for every fixed $\del \in (0,1)$, as $|t| \maps \infty$,
\[ A(1-\del+it) =  \tfrac{\sqrt{\pi}}{4} t (\ep_+(\del,t)e^{+ \tfrac{1}{2}\del t}+  \ep_-(\del,t)e^{- \tfrac{1}{2}\del t}) (1+O_\del(1/|t|)) +O(1+|t|),\]
for a continuous function $\ep_\pm(\del,t)$ such that $|\ep_\pm(\del,t)|=1$ for all $\del,t$.
Thus $A(s)$ does not satisfy a polynomial growth condition on $\Re(s)=1-\del$ (of the type required by Hypothesis \ref{hyp:Strong}), for any $\del \in (0,1).$
\end{thmB}
This illustrates that a Dirichlet series $A(s) =\sum a_n n^{-s}$ can admit a meromorphic continuation far to the left of precisely one real pole of order $m$, and yet still not yield an asymptotic with power-saving remainder term for the partial sums $\sum_{n \leq x}a_n$. It remains an interesting possibility to study other conditions on growth that can replace the pointwise polynomial growth condition in Hypothesis \ref{hyp:Strong}; one recent exploration of average control appears in \cite{Alb25x}.

\section{Preliminaries}

We gather in this section  several introductory facts,  and the notation we will use in the proofs of the main Tauberian theorems.  In particular, in the proof of Theorem \ref{thm:Weak} we will use the Fourier transform and the Laplace transform; in the proof of Theorem \ref{thm:Strong} we will use the Mellin transform, and we gather useful properties of these transforms here.

\subsection{Notation}\label{sec_notation}
 
We use the  convention that for $A \in \C$ and $B \in \R_{\geq 0}$, the expression $A\ll B$ indicates that there exists a constant $C$ such that $|A| \leq CB$. The expression $A \ll_\kappa B$ indicates that the constant $C$ may depend on $\kappa$. 

 \subsection{Convergence in half-planes} \label{sec_convergence}
Given a general Dirichlet series $D(s) = \sum_{n} d_n \lam_n^{-s}$ with complex coefficients $\{d_n\}_n$, define the
 abscissa of   convergence  (in two equivalent forms by \cite[Lemma 6.11]{BatDia04}) by
 \begin{align*} 
 \sig_c &= \inf\{ \sig: \text{$D(s)$ converges for all $s$ with  $\Re(s)>\sig$}\} \\
 &= \inf\{ \sig: \text{$D(s)$ converges for some $s$ with $\Re(s) = \sig$}\} .
 \end{align*}
 The series $D(s)$ is locally uniformly convergent for $ \Re(s)>\sig_c$, so that it is in fact analytic in the right half-plane $\Re(s)> \sig_c$; see e.g. \cite[p. 13]{MonVau07}. 
 Define the
 abscissa of absolute convergence   
 \[\sig_a = \inf\left\{ \sig: \text{$\sum_{n=1}^\infty |d_n \lam_n^{-s}| < \infty$ for all $s$ with $\Re(s)>\sig$}\right\}.\]
 For any general Dirichlet series $D(s)$, these two abscissa satisfy a relation $\sig_c \leq \sig_a \leq \sig_c+L$, in which  we may take $L := \limsup_{n \maps \infty} \log n/ \log \lam_n$. Thus, in particular,    $L=1$  when $D(s)= \sum d_n n^{-s}$ is a standard Dirichlet series (that is $\lam_n=n$ for all $n$), while $L$ may be infinite for a general Dirichlet series $D(s)= \sum d_n \lam_n^{-s}$ if $\lam_n$ grows slowly. To verify the relation $\sig_c \leq \sig_a \leq \sig_c +1$ for the case of a standard Dirichlet series,   note that by studying $D(s- \sig_c)$ in place of $D(s)$,  it suffices to consider the case in which $\sig_c=0$. Then if $\Re(s) = \sig>1$, 
 \[ \sum_{n=1}^\infty \left|\frac{d_n}{n^s}\right| =\sum_{n=1}^\infty \frac{|d_n|}{n^\sig} \leq \left(\sup_{n \geq 1} \frac{|d_n|}{n^{\ep_0}} \right) \sum_{n = 1}^\infty \frac{1}{n^{\sig-\ep_0}} < \infty,
 \]
 where we take $\ep_0>0$ sufficiently small that $\sig-\ep_0>1$, and apply the fact that the $\sup$ is finite, since $\sum_{n}d_n n^{-s}$ converges for all $\Re(s)>0$ (so $\lim_{n \maps \infty} |d_n|n^{-\ep_0}=0$). This verifies that $L=1$ suffices for all standard Dirichlet series; see also \cite[Thm. 1.4]{MonVau07}.
 This argument adapts to the case where $D(s) = \sum_n d_n \lam_n^{-s}$ is a general Dirichlet series upon taking $L := \limsup_{n \maps \infty} \log n/ \log \lam_n.$ See for example the classic reference \cite[Ch. II \S 7 Thm. 9]{HarRie15}. 
 On the other hand, if $D(s)$ has non-negative coefficients, which is our focus, then $\sig_c=\sig_a$, since for any $s=\sig+it$, 
 \[   \sum_{n \geq 1}\left| \frac{d_n}{\lam_n^{\sig+it}}\right| = \sum_{n \geq 1} \frac{|d_n|}{\lam_n^\sig} = \sum_n \frac{d_n}{\lam_n^\sig} = D(\sig), \]
 which converges for all $\sig>\sig_c.$
 
 \subsection{Landau's theorem for non-negative coefficients}\label{sec_Landau}
 A classical theorem of Landau   shows that if the coefficients $d_n$ of a general Dirichlet series $D(s)  = \sum_{n} d_n \lam_n^{-s}$ are real and non-negative (so that the partial sum $\sum_{n \leq x} d_n$ is a monotone non-decreasing function of $x$) then $D(s)$ has a singularity at the point $s=\sig_c$ on the real line. That is to say, if the coefficients of $D(s)$ are non-negative, then the abscissa of convergence $\sig_c$ is precisely the largest real number at which $D(s)$ has a singularity.  Indeed, it suffices for the coefficients of $D(s)$ to be ``eventually'' non-negative (non-negative for all $n$ sufficiently large), since any   initial sum over finitely many indices is analytic. (See e.g. \cite[\S 9.2]{Tit32}, \cite[Thm. 6.31]{BatDia04}, \cite[Thm. 1.7]{MonVau07} or for general Dirichlet series \cite[Ch. II \S 8 Thm. 10]{HarRie15}.)
In our setting, under Hypothesis \ref{hyp:Weak}, since the series $A(s)$   has non-negative coefficients and   only one pole (namely $s=\al$)  in the region $\Re(s) \geq \al$, Landau's theorem shows that $A(s)$ has abscissa of convergence (and absolute convergence) $\sig_c=\sig_a=\al$.

\begin{remark}[Polynomial subgroup growth and convergence of $\zeta_G(s)$]\label{remark_zeta_group_convergence}   Recall the zeta function $\zeta_G(s)$ of a finitely generated group $G$, as defined in \S \ref{sec_zeta_group}.  The series $\zeta_G(s)$ has non-negative coefficients $a_n(G)$, so by \S \ref{sec_convergence} the series $\zeta_G(s)$ is convergent in a right half-plane $\Re(s)>\al$ for a given $\al>0$, if and only if it is absolutely convergent in that half-plane, if and only if it is holomorphic in that half-plane. There is a quantitative relationship between $\al$ and the rate of subgroup growth. Indeed, in one direction, suppose that $G$ has polynomial subgroup growth, so that for some $\al>0$, $S_x(G) = \sum_{n \leq x} a_n(G) \ll x^\al$ for   $x \maps \infty$. Then for a given $s \in \C$ with $\Re(s)=\sig>0$, partial summation shows that
\[ |\zeta_G(s)| \leq \sum_{n=1}^\infty \frac{a_n(G)}{|n^s|} =\sum_{n=1}^\infty \frac{a_n(G)}{n^{\sig}} = \lim_{x \maps \infty} (x^{-\sig} S_x(G)+ \sig \int_{1}^x S_u(G)  u^{-\sig-1}du).\]
This shows that $\zeta_G(s)$ converges absolutely for all $s$ with $\Re(s)>\al,$ and is holomorphic   in  this right half-plane. In the other direction, if $\zeta_G(s)$ converges (absolutely) for a given complex $s$ with $\Re(s)>0$, then  Kronecker's lemma (Theorem \ref{thm:Kronecker}) shows that $S_x(G)=\sum_{n\leq x} a_n = o(x^{\Re(s)})$ as $x \maps \infty$, and  $G$ has polynomial subgroup growth. In particular, if $\zeta_G(s)$ converges for all $\Re(s)>\al$ then for every $\ep>0$, $S_x(G) =o(x^{\al+\ep})$. 
\end{remark}

\subsection{Fourier transform}\label{sec_Fourier_transform}
Given integrable functions $f$ and $F$ on $\R$ we define the Fourier transform by 
\[ \Fscr(f)(\xi) = \int_{-\infty}^\infty f(x) e^{-i\xi x}dx\] 
and the Fourier inverse by 
\[ \Fscr^{-1}(F)(x) = \frac{1}{2\pi} \int_{-\infty}^{\infty} F(\xi) e^{i \xi x}d\xi.\] 
 These conventions are scaled so that  $\Fscr^{-1}(\Fscr(f)) = f$ and $\Fscr(\Fscr^{-1}(F))=F$. Additionally, for these identities to hold, $f$ and $\Fscr(f)$ (respectively, $F$ and $\Fscr^{-1}(F)$) must be appropriately well-behaved; for example, the first identity holds if $f$ is continuous and both $f$ and $\Fscr(f)$ are integrable (and analogously for the second identity). For any real $a$,  \[ \Fscr^{-1}(F(\cdot -a))(x) = \Fscr^{-1}(F)(x) e^{i a x}.\] Moreover, for any real $\lam>0$, if $F_\lam(\xi)$ is defined by $F_\lam(\xi) = \lam F(\lam \xi)$ then rescaling shows that $\Fscr^{-1}(F_\lam)(x) = (\Fscr^{-1}(F)) (x/\lam)$. Recall the class of Schwartz functions: a Schwartz function $f$ has the property that $f$ is infinitely differentiable and $f$ and all of its derivatives are rapidly decreasing, i.e. they decay faster than any power of $|x|$ as $|x| \maps \infty$. If $f$ is a Schwartz function, then $\Fscr(f)$ is also a Schwartz function \cite[Ch. 5 Thm. 1.3]{SSFour}.  

In this section we use properties of the Fourier transform to construct a type of weight function (Lemma \ref{lemma_build_kernel}) that will be used in the proof of Theorem \ref{thm:Weak}. We begin with:
\begin{lemnum}\label{lemma_Ft_even_fn}
Let $f$ be an even, integrable, real-valued function on $\R$. Then $\Fscr(f) $ is also a real-valued, even function. 
\end{lemnum}
 \begin{proof}
Since $f$ is even,
\[ \Fscr(f)(\xi) = \int_{0}^\infty f(x) e^{-i \xi x}dx +\int_{0}^\infty f(x) e^{i \xi x}dx  = 2 \int_0^\infty f(x) \cos (x\xi)dx,\] 
which is a real-valued even function of $\xi$.  
 \end{proof}

 \subsubsection{Additive convolution}\label{sec_additive_convolution}
In the context of applying the Fourier transform, we will denote the (additive) convolution of two integrable functions by  
\[ (f \star g) (x) = \int_{-\infty}^\infty f(x-y)g(y)dy.\] 
This notation for additive convolution is only used in this subsection  of the paper; while it is usually denoted  $f*g$, we use the $\star$ symbol to distinguish it from   multiplicative convolution, introduced in \S \ref{sec_notation_Mellin} in the context of the Mellin transform.

\begin{lemnum}\label{lemma_additive_convolution}
 
The following facts hold:
\begin{enumerate}[label=(\roman*)]
\item   If $g$ is integrable and $f \in C^k$, and the $j$-th derivative $  f^{(j)}$ is bounded for $j \leq k$, then $f\star g \in C^k$ and $(f\star g)^{(j)} = ( f^{(j)} \star  g)$ for $j \leq k$. 
\item If $f, g$ are Schwartz functions then $f\star g$ is a Schwartz function.
\item If $f$ and $g$ are integrable, $\Fscr(f \star g) = \Fscr(f) \cdot \Fscr(g)$.
\item If $f$ and $g$ are both compactly supported, then so is $f\star g$, and  $\supp(f\star g) \subseteq \overline{S}$ where $S=\{x+y: x \in \supp(f), y \in \supp(g)\}$. 
\item If $f$ and $g$ are piecewise continuous and compactly supported, then $f\star g$ is continuous.
\item Assuming the relevant integral converges, if $f$ and $g$ are  even, then $f\star g$ is even. 

\end{enumerate}
In particular, if $f$ is a real-valued, piecewise continuous,   even function that is compactly supported in $[-c_0,c_0]$, then $f\star f$ is a real-valued, continuous,  even function that is compactly  supported in $[-2c_0,2c_0]$.
\end{lemnum}
\begin{proof}
    Properties (i), (ii), (iii), (iv) can be found in standard real analysis books such as Folland \cite{Fol99}, in which they appear as Prop. 8.10, Prop 8.11, Thm. 8.22, and Prop. 8.6, respectively. Briefly, the key insight to (i) is that $f\star g$ is at least as smooth  as the smoother of $f,g$, since formally 
    \[ \partial^\al (f\star g)(x) = \partial^\al \int f(x-y)g(y) dy = \int \partial^\al f(x-y)g(y) dy = ((\partial^\al f)\star g) (x),\] 
    or equivalently evaluates to $(f \star (\partial^\al g))(x)$. The conditions assumed in (i) allow the differentiation under the integral sign to be justified. Property (ii) follows from (i) since $f, g$ are $C^\infty$ and decay rapidly (as do all their derivatives), so that $f \star g$ inherits these properties. Property (iii) holds by Fubini's theorem, since 
    \begin{align*}        
     \Fscr(f \star g)(\xi)& = \iint f(x-y) g(y) e^{-i \xi \cdot x} dy dx = \iint f(x-y)e^{-i \xi \cdot (x-y)} g(y) e^{-i \xi \cdot y} dx dy  \\
     &= \Fscr(f)(\xi) \int g(y) e^{-i\xi \cdot y} dy =
     \Fscr(f)(\xi)\Fscr(g)(\xi).
     \end{align*}
     Property (iv) can be seen from the fact that if $x \not\in \overline{S}$ then for any $y \in \supp(g)$, it must be that $x-y \not\in \supp(f)$, so that $f(x-y)g(y)=0$ for all $y$. Thus $(f\star g)(x)=0$. Property (v) follows from noting that 
     \[ (f \star g)(x+h) - (f\star g)(x) = \int[ f(x+h-y) - f(x-y)]g(y) dy,\]
     which can be written as a finite sum of integrals over compact intervals upon which $f$ is uniformly continuous. By uniform continuity, for any $\ep>0$, there is some $h_0$ such that for all $|h| \leq h_0$ the integral above is bounded by $\ep \int g(y)dy \ll_g \ep$, which suffices. For (vi), we observe that 
     \begin{align*}
     (f\star g)(-x) & = \int f(-x-y) g(y) dy  = \int f(x+y)g(y) dy = \int f(x-y) g(-y) dy \\
     & = \int f(x-y) g(y) dy = (f \star g)(x).
      \end{align*}
     
\end{proof}

A classical kernel arises from the choice $f(x) = \tfrac{1}{2}\mathbf{1}_{[-1,1]}(x)$. With this definition, we can compute   a function $k$ and its Fourier transform $K$:
\begin{align} 
k(x) &:=(f\star f)(x) = \tfrac{1}{4} \mathrm{meas} ( [x-1,x+1] \cap [-1,1]) = \tfrac{1}{2}(1 - \tfrac{|x|}{2}) \mathbf{1}_{[-2,2]}(x), \nonumber\\
K(\xi)&:=\Fscr(k)(\xi) = \Fscr(f\star f)(\xi) = (\Fscr(f)(\xi))^2 = (\tfrac{\sin \xi}{\xi})^2.\label{Fejer_dfn}
\end{align} 
The function $K$    is known as  the Fej\'er kernel; we record the fact that
\beq\label{Fejer_integral}
\int_{-\infty}^\infty K(\xi)d\xi  = 2\pi  \Fscr^{-1}(K)(0) = 2\pi k(0) = \pi.
\eeq

More generally,  we record a construction of a large class of kernel functions, any of which can be used in the proof of Theorem \ref{thm:Weak} (see Lemma \ref{lemma_allowable_approx_id}).
 \begin{lemnum}\label{lemma_build_kernel}
   Let $\eta$ be a compactly supported, even, non-negative Schwartz function on $\R$. Then  
  \[ k(x) = \frac{1}{2\pi} \frac{1 }{(\eta \star  \eta)(0)} (\eta \star  \eta)(x) \] 
is a compactly supported, even, non-negative Schwartz function. In particular, if $\eta$ is supported in $[-\del_0,\del_0]$ then $k$ is supported in $[-2\del_0,2\del_0]$. The Fourier transform $K:= \Fscr(k) $  is an even, non-negative Schwartz function with 
\[\int_{-\infty}^\infty K(\xi)d\xi = 2\pi \Fscr^{-1}(K)(0) = 2\pi k(0)=1.\]
 \end{lemnum} 
 \begin{proof}
     That $k$ is a  compactly supported, even, non-negative Schwartz function, follows from (iv), (vi) and (ii) of Lemma \ref{lemma_additive_convolution}; it is supported in $[-2\del_0,2\del_0]$ if $\eta$ is supported in $[-\del_0,\del_0]$ by (iv). The Fourier transform is an even real-valued Schwartz function by Lemma \ref{lemma_Ft_even_fn}. Moreover, by (iii) we see that $\Fscr(K) = (2\pi (\eta \star \eta)(0))^{-1}\Fscr(\eta)^2$, where $\Fscr(\eta)$ is real-valued by Lemma \ref{lemma_Ft_even_fn}, so that $\Fscr(K)$ takes only non-negative values. We have chosen the normalization correctly so that the average of $K$ is $1$.
 \end{proof}
 \subsection{Laplace transform}\label{sec_notation_Laplace}
Given a complex-valued measurable function $b(t)$   on $[0,\infty)$, the Laplace transform is defined for $s \in \C$ (where convergent) by 
\[B(s) = \int_0^\infty b(t) \exp(-st)dt.\]
Thus for example if $s=iy$ for $y \in \R$, then $B(s) = \Fscr(b)(y)$. 

  \begin{lemnum}\label{lemma_bG}
      For each integer $m \geq 1$, the function $b(t)$ defined    for $t \in [0,\infty)$ by
    \[b(t)=t^{m-1}/(m-1)!\]   
    has Laplace transform   $B(s)=s^{-m}$ for $\Re(s)>0$. That is, for all $s$ with $\Re(s)>0$, the following identity holds, in which the integral is (in particular) convergent:
    \beq\label{generic_pole_identity} 
    s^{-m}=\int_{0}^{\infty}\frac{1}{(m-1)!}t^{m-1}\exp(-st)\,dt.
    \eeq
\end{lemnum} 
\begin{proof} Observe that for all real $\sig>0$, 
\[ (m-1)!= \Gamma(m) = \int_0^\infty t^{m-1} \exp(-t) dt = \int_0^\infty (\sig t)^{m} \exp(-\sig t) \frac{dt}{t} ,\]
in which the right-most integral is convergent.
This shows that (\ref{generic_pole_identity}) holds for all
holds for all real $s=\sig>0$. The integral in (\ref{generic_pole_identity}) is convergent, and holomorphic in $s$, for all $s$ with $\Re(s)>0$; by analytic continuation (\ref{generic_pole_identity}) also holds  for all $s$ in the mutual region of holomorphicity of both sides, which is the right half-plane $\Re(s)>0$.
\end{proof}

A general Dirichlet series  $D(s)=\sum_{n}d_n \lam_n^{-s}$ associated to complex coefficients $\{d_n\}_n$ can be expressed in terms of the Laplace transform of   the partial sum function 
\[ d(t):= \sum_{\lam_n \leq \exp(t)} d_n\]
in order to access functional analytic tools. Precisely, suppose $D(s)$ converges for all $s$ with $\Re(s)>\al$. 
  Express the general Dirichlet series as a Lebesgue-Stieltjes integral and apply integration by parts to see that for $\Re(s)>\al$,
\[
D(s)=\sum_{n=1}^{\infty}d_n \lam_n^{-s} = \int_{0}^{\infty}\exp(-st)\,d(d(t))\\
= \lim_{t\to \infty}\frac{d(t)}{\exp(st)}+s\int_{0}^{\infty}d(t)\exp(-st)\, dt.
\]
By Kronecker's lemma (Theorem \ref{thm:Kronecker} with $x=\exp(t)$), for  every $s$ with $\Re(s)>\al$,
\[
\lim_{t\to \infty}\frac{d(t)}{\exp(st)} = 0.
\]
 Thus for $\Re(s)>\al$,
\beq\label{Dir_series_as_Laplace_transf}
D(s)=\sum_{n=1}^{\infty}d_n \lam_n^{-s} = s\int_{0}^{\infty}d(t)\exp(-st)\, dt.
\eeq

 \subsection{Mellin transform}\label{sec_notation_Mellin}

There are two standard conventions for defining the Mellin transform. 

 Convention 1: Given a function $F(x)$, one convention  is to define the Mellin transform as a Riemann-Stieltjes integral (where convergent):
\beq\label{Mellin_dfn1}
\mathcal{M}(F)(s) = \int_0^\infty x^{-s}dF(x).
\eeq
This convention is used for example in \cite{BatDia04}, which applies this definition to the class of $F$ that are supported on $[1,\infty)$ (so that nominally the integral is over $[1,\infty)$),   right-continuous, and  locally of bounded variation. We will   use this convention, for functions in this class, in \S \ref{sec_hyp_weak_counterex_further}, and in \S \ref{sec_hyp_weak_counterex} when proving Theorem \ref{thm_hyp_weak_counterex}, in order to cite that source conveniently.
For functions in this class, an application of integration by parts alternatively expresses 
\[\mathcal{M}(F)(s) = s \int_1^\infty F(t) t^{-s-1}dt,\]
as long as the boundary terms vanish  (which occurs for example if $F(1)=0$ and  $|F(t)|=o(t^{\Re(s)}$)).
For example, given a standard Dirichlet series $D(s)=\sum_{n} d_n n^{-s}$ associated to complex coefficients $\{d_n\}_n$, with partial sum function $d(x)=\sum_{1 \leq n \leq x} d_n$, we have
\[ D(s)=\mathcal{M}(d)(s), 
\]
as shown in  \cite[\S 6.1 example (1)]{BatDia04}.
This normalization is particularly convenient for Dirichlet series, since the signed  exponent $-s$ in (\ref{Mellin_dfn1}) makes visible the convergence property of the integral, for $s$ in a right-half plane. 

Convention 2: Another standard convention employs the opposite sign $+s$. 
Given an appropriate function $\varphi$, define for $s \in \C$ the Mellin transform 
 by 
\beq\label{Mellin_dfn2}
\hat{\varphi}(s) :=  \int_0^{\infty} \varphi(u) u^{s-1}du .
\eeq
This convention is used for example in \cite[\S 5.1]{MonVau07}.
This convention is in some ways more natural when the Perron formula (Mellin inversion) is employed; we will apply this convention in the proof of Theorem \ref{thm:Strong} in \S \ref{sec_B_prelim} - \S \ref{sec_B_proof}.
 
 To relate this to the complementary convention (\ref{Mellin_dfn1}), note that $\hat{F}(s) = -\tfrac{1}{s}\Mcal(F)(-s)$. In the specific setting of   a standard Dirichlet series $D(s) = \sum_{n}d_n n^{-s} = \mathcal{M}(d)(s)$ with corresponding partial sum truncation $d(x) = \sum_{n \leq x} d_n$, note that
\[
(d(1/\cdot))\hat{\;}(s) =\int_0^\infty d(1/u) u^{s-1}du= \int_0^\infty \sum_{n =1}^\infty  d_n \mathbf{1}_{\leq 1}(nu) u^{s} \frac{du}{u} = D(s) \int_0^1 v^{s-1} dv=  \frac{D(s)}{s},
\]
under the change of variables $v=nu$.
The definition (\ref{Mellin_dfn2}) also relates to the Laplace transform   by a change of variables:
\[ (\varphi(-\log \cdot))\hat{\;}(s) = \int_0^\infty \varphi(-\log u) u^s \frac{du}{u} = \int_{-\infty}^\infty \varphi(t) \exp (-st) dt.\]
A similar change of variable relates the Mellin and Fourier transforms; see e.g. \cite[p.141]{MonVau07}. 

\subsubsection{Properties of the Mellin transform}
Here we record  useful properties of the Mellin transform (\ref{Mellin_dfn2}) and multiplicative convolution.
 \begin{lemnum} \label{lem:mellin_cpt_cts}
      If $f : (0,\infty) \to \C$ is compactly supported away from the origin and continuous, then  the Mellin transform
      \[
       \widehat{f}(s) = \int_0^{\infty} f(u) u^{s-1} du
      \]
      is an entire function of $s \in \C$.
  \end{lemnum}
   
  \begin{remark}
      In particular, $\widehat{f}(s)$ is equal to this integral representation for all $s \in \mathbb{C}$ without the need for any analytic continuation. 
  \end{remark}
    \begin{proof}
        By assumption, there exists $0 < a < b< \infty$ such that the integral  is equal to $\int_a^b f(u)u^{s-1}du$ for all $s \in \C$. Since this integrand is continuous on the compact interval $[a,b]$, the integral converges. This implies that $\widehat{f}(s) = \int_0^{\infty} f(u) u^{s-1} du$ for all $s \in \C$. 

        To show $\widehat{f}(s)$ is entire, observe that our assumptions imply that the map $(s,u) \mapsto f(u) u^{s-1}$ and its $\frac{\partial}{\partial s}$ partial derivative are continuous on $\mathbb{C} \times  [a,b]$. By Leibniz's integral rule, we may differentiate $\frac{\partial }{\partial s}$ under the integral sign for $\int_a^b f(u) u^{s-1} du = \int_0^{\infty} f(u) u^{s-1} du = \widehat{f}(s)$. This establishes that $\widehat{f}(s)$ is complex differentiable and hence entire. 
    \end{proof}

\begin{lemnum} \label{lem:convolution}
 Let $f : (0,\infty) \to \C$ and $g : (0,\infty) \to \C$ be bounded and $L^1$ on $(0,\infty)$ with respect to the Haar measure $d^{\times} u = du/u$. The convolution $f \ast g : (0,\infty) \to \C$ is defined by
 \[
 (f \ast g)(u) = \int_0^{\infty} f(u/w) g(w) \frac{dw}{w}
 \]
 for $u \in (0,\infty)$, and satisfies all of the following:
 \begin{enumerate}[label=(\roman*)]
    \item $f \ast g = g \ast f$ and $||f \ast g||_{\infty} \leq ||f||_{\infty} ||g||_1$. 
     \item  Fix $0 < a < b < \infty$ and $0 < c< d < \infty$. If $\mathrm{supp}(f) \subseteq [a,b]$ and $\mathrm{supp}(g) \subseteq [c,d]$ then   $\mathrm{supp}(f \ast g) \subseteq [ac, bd]$. 
     \item If $f$ or $g$ is continuous, then $f \ast g$ is continuous. 
     \item If $f$ and $g$ are compactly supported away from the origin and continuous, then the Mellin transform of $f \ast g$ is entire and, moreover,
     \[
     \widehat{f \ast g}(s) = \widehat{f}(s) \widehat{g}(s) \qquad \text{ for } s \in \C. 
     \]
     \item Fix $0 < a < b < \infty$. The convolution $f \ast \mathbf{1}_{[a,b]}$ is continuous and, if additionally $f$ is $C^j$ for some integer $j \geq 0$, then $f \ast \mathbf{1}_{[a,b]}$ is $C^{j+1}$. 
 \end{enumerate}
\end{lemnum}  
\begin{proof}
    Observe for $u \in (0,\infty)$ that 
    \[
    \int_0^{\infty} |f(u/w) g(w)| \frac{dw}{w} \leq ||f||_{\infty} \int_0^{\infty} |g(w)| \frac{dw}{w} = ||f||_{\infty} ||g||_1,
    \]
    so $(f \ast g)(u)$ is defined by Lebesgue's dominated convergence theorem. 
    
    (i) The claim $f \ast g = g \ast f$ follows from the substitution $w \mapsto u/w$. The $L^{\infty}$-norm claim follows from the above inequality. 
    
    (ii) For $u,w \in (0,\infty)$, observe that $f(u/w)g(w) \neq 0$ only if $w \in [c,d]$ and $u/w \in [a,b]$ and hence only if $u = u/w \cdot w \in [ac, bd]$. Thus, $(f \ast g)(u) = 0$ whenever $u \not\in [ac,bd]$.

    (iii) By (i), we need only consider the case when $f$ is continuous. Fix $u \in (0,\infty)$. Let $(u_k)_k$ be any sequence of points in $(0,\infty)$ such that $u_k \to u$ as $k \to \infty$. The function $ h_k(w) := f(u_k/w) g(w)$ is dominated by the function $w \mapsto ||f||_{\infty} |g(w)|$ which is integrable on $(0,\infty)$ with respect to $d^{\times} w = dw/w$. Setting $h(w) := f(u/w) g(w)$, we have  $h_k \to h$ pointwise as $k \to \infty$ by continuity of $f$. Thus, by Lebesgue's dominated convergence theorem,
    \[
    (f \ast g)(u_k) = \int_0^{\infty} h_k(w) dw \longrightarrow \int_0^{\infty} h(w) dw = (f \ast g)(u) 
    \]
    as $k \to \infty$. This proves continuity of $f \ast g$. 
 
    (iv) By (i), (ii), and (iii), the convolution $f \ast g$ is also bounded, compactly supported away from the origin, and continuous. By applying \cref{lem:mellin_cpt_cts} to $f \ast g$, then Fubini's theorem, and then \cref{lem:mellin_cpt_cts} again to $f$ and $g$, it follows that
    \begin{align*}
    \widehat{(f * g)}(s) 
    & = \int_0^{\infty} \Big( \int_0^{\infty} f(u/w) g(w) \frac{dw}{w} \Big) u^{s-1} du \\
    & = \int_0^{\infty} \int_0^{\infty} f(u/w) (u/w)^{s-1} g(w) w^{s-2} du dw  = \widehat{f}(s) \widehat{g}(s)
    \end{align*}
    for all $s \in \mathbb{C}$. 
    
    (v) Using the substitution $w \mapsto u/w$, we have that
    \[
    (f \ast \mathbf{1}_{[a,b]})(u) = \int_a^b f(u/w) \frac{dw}{w} = \int_{u/b}^{u/a} f(w) \frac{dw}{w}. 
    \]
    The continuity claim follows by the integrability of $f$ with respect to $d^{\times} w = dw/w$. If $f$ is $C^j$ for some $j \geq 0$, then by the fundamental theorem of calculus, 
    \[
    (f \ast \mathbf{1}_{[a,b]})'(u) = \frac{f(u/a)}{u/a} - \frac{f(u/b)}{u/b}
    \]
    for $u \in (0,\infty)$, so the derivative $(f \ast \mathbf{1}_{[a,b]})'$ is $C^j$ and hence $f \ast \mathbf{1}_{[a,b]}$ is $C^{j+1}$. 
\end{proof}

\section{Hypothesis \ref{hyp:Kronecker} }\label{sec_Kronecker}

A proof of Kronecker's lemma for standard Dirichlet series may be found in \cite[Thm. 2.12]{Hil05}.
 (See also a very general statement in \cite[Lemma 4.2.2]{QueQue13}.) Here we provide a proof for general Dirichlet series. It is useful to state a partial summation formula for sums indexed by an increasing sequence $\{\lam_n\}_n$, found in \cite[Thm. A]{Ing32}. 
\begin{lemnum}\label{lem_partial_summation_general}
Let $\phi$ be a continuously differentiable complex-valued function on $(0,\infty)$.  Let  $\{\lam_n\}_n$ be a non-decreasing sequence of positive real numbers tending to infinity as $n \maps \infty$. Let $\{c_n\}_n$ be a sequence of complex numbers, and define for any $0<x_1 \leq x_2$,  
 \[ C(x_1,x_2) = \sum_{x_1<\lam_n \leq x_2} c_n.\]
 Then 
 \[ \sum_{x_1< \lam_n \leq x_2} c_n \phi(\lam_n) = C(x_1,x_2)\phi(x_2)    - \int_{x_1}^{x_2} C(x_1,u) \phi'(u)du.\]
\end{lemnum}
\begin{proof}
As in   proofs of similar formulas, the proof begins by writing
\[C(x_1,x_2)\phi(x_2) - \sum_{x_1< \lam_n \leq x_2} c_n \phi(\lam_n) = \sum_{x_1< \lam_n \leq x_2} c_n (\phi(x_2) - \phi(\lam_n)) = \sum_{x_1<\lam_n \leq x_2} \int_{\lam_n}^{x_2} c_n \phi'(u) du.\]
Now to interchange the order of summation and integration, define auxiliary functions by setting $g_n(u) = c_n \phi'(u)$ for $u \geq \lam_n$ and $g_n(u)=0$ for $u < \lam_n$. Then certainly
\[ \sum_{x_1< \lam_n \leq x_2} \int_{x_1}^{x_2} g_n(u) du = \int_{x_1}^{x_2} \sum_{x_1< \lam_n \leq x_2} g_n(u) du.\]
Applying this to the previous expression, we can complete the proof:
\[\sum_{x_1<\lam_n \leq x_2} \int_{\lam_n}^{x_2} c_n \phi'(u) du = \int_{x_1}^{x_2}\sum_{x_1<\lam_n \leq u}  c_n \phi'(u) du
= \int_{x_1}^{x_2} C(x_1,u)\phi'(u)du.\]
\end{proof}

\subsection{Proof of  Theorem \ref{thm:Kronecker}}
To prove Theorem \ref{thm:Kronecker}, consider now the given general Dirichlet series $D(s) = \sum_n d_n \lam_n^{-s}$ with complex coefficients $\{d_n\}_n$, which converges for a given $s$ with $\Re(s)>0$. We will apply the partial summation lemma with $c_n=d_n \lam_n^{-s}$ and $\phi(u) = u^s$. Supposing $x \geq 1$ is given, for any $1 \leq N \leq x$ of our choice, we can trivially write 
\beq\label{D_decomp} \sum_{1 \leq \lam_n \leq x} d_n = \sum_{1 \leq \lam_n \leq N} d_n + \sum_{N<\lam_n \leq x} (d_n \lam_n^{-s})\phi(\lam_n) .
\eeq
By an application of partial summation,  the second term on the right-hand side evaluates to 
\[
x^s\left(\,\sum_{N< \lam_n \leq x} d_n \lam_n^{-s}\right) - \int_{N}^x \left(\,\sum_{N< \lam_n \leq u} d_n \lam_n^{-s}\right) \frac{d}{du}(u^s) du.
\]
By the assumption that $D(s) = \sum_n d_n \lam_n^{-s}$ converges, truncations of this series form a Cauchy sequence, so that  for every $\ep>0$ we may make a choice of $N=N_\ep$ sufficiently large (depending on $\ep, s$) such that  $|\sum_{N_\ep< \lam_n \leq u} d_n \lam_n^{-s} |\leq \ep$ for all $u\geq N_\ep$. Consequently, upon writing $\sig = \Re(s)$, for all $x \geq N_\ep$ the first term in the previous expression   is bounded above in absolute value by $\ep x^\sig$ while the integral term   may be estimated by 
\[ \int_{N_\ep}^x \left|\sum_{N_\ep< \lam_n \leq u} d_n \lam_n^{-s}\right| \left| \frac{d}{du}(u^s)\right| du \leq \ep |s| \int_{N_\ep}^x u^{\sig-1} du = \ep\frac{|s|}{\sig}(x^\sig-N_\ep^\sig) \leq \ep\frac{|s|}{\sig}x^\sig.\]
Applying these bounds in (\ref{D_decomp}) with $N=N_\ep$ shows that for every $x \geq N_\ep$, 
\[ \left|\sum_{1 \leq \lam_n \leq x} d_n\right| \leq \left|\sum_{1 \leq \lam_n \leq N_\ep} d_n\right| + \ep x^\sig + \ep \frac{|s|}{\sig} x^\sig.\]
With $\ep$ and $N_\ep$ fixed, we let $x \maps \infty$ to see that 
\[ 
\limsup_{x \maps \infty} x^{-\sig}\left|\sum_{1 \leq \lam_n \leq x} d_n\right| \leq   \ep  + \ep \frac{|s|}{\sig} .\]
Since $\ep>0$ may be taken arbitrarily small, we conclude that $\sum_{\lam_n \leq x} d_n = o(x^\sig)$ as $x \maps \infty$, completing the proof of Theorem \ref{thm:Kronecker}.

 \subsection{Proof of Proposition \ref{prop_limit_of_Kronecker}}\label{sec_PNT_counterexample}

The counterexample described in Proposition \ref{prop_limit_of_Kronecker} complements the following  Abelian theorem adapted from \cite[Ch. II \S 10]{Ing32}. 
\begin{lemnum}\label{lemma_abelian_asymptotic_implies_order_of_pole}
Suppose 
 $D(s) = \sum_n d_n n^{-s}$  is a (standard) Dirichlet series with complex coefficients $\{d_n\}_n$ that is absolutely convergent for real $s=\sig >1$.  
If 
 \beq\label{lim_known} \lim_{x \maps \infty} \frac{\sum_{n \leq x}d_n}{x} =c_0, 
 \eeq
 then 
 \beq\label{residue_known}
 \lim_{\sig \maps 1^+} (\sig-1)D(\sig)=c_0.
 \eeq
 \end{lemnum}
 The proof of this lemma, which we omit, follows from a mild adaptation of the elementary proof   of Chebyshev's theorem that if $ \lim_{x \maps \infty}\pi(x)/(x(\log x)^{-1})$ exists, it must be 1; details may be found in \cite[Ch. I Thm. 6]{Ing32}.
 The converse of the lemma, which would play the role of a Tauberian theorem, is false. This is a barrier to proving the Prime Number Theorem by simple real-variable methods.  Indeed, upon setting $D(s) = -\zeta'(s)/\zeta(s)$, then $d(x) = \sum_{n \leq x} \Lambda(n) = \psi(x)$ in Chebyshev's notation. Simple real-variable considerations show that $\lim_{\sig \maps 1^+} -(\sig-1) \zeta'(\sig)/\zeta(\sig) =1$, so that if the converse of the lemma were true, one could immediately deduce that $\psi(x) \sim x$,   equivalent to the Prime Number Theorem.

Now we prove Proposition \ref{prop_limit_of_Kronecker}, which 
shows that the converse of the lemma is false. 
For $\Re(s)>1$ define the Dirichlet series
\beq\label{Ingham_example}
A(s) = \zeta(s)  + \frac{1}{2}\zeta(s-i)+ \frac{1}{2}\zeta(s+i) =\sum_{n =1}^\infty \frac{1 + \cos (\log n)}{n^s}=: \sum \frac{a_n}{n^s}.
\eeq
The first representation shows that $A(s)$ has a meromorphic continuation to $\C$ with only a simple pole at each of $s=1, s=1+i$ and $s=1-i$, while the second makes visible that $a_n \in [0,2]$. 
Since $\zeta(1\pm i)$ is bounded at $s=1$, and $\zeta(s)$ has a simple pole of residue $1$ at $s=1$, (\ref{residue_known}) holds with $c_0=1$. 

But the limit analogous to (\ref{lim_known}) does not exist. This can be seen by computing $a(x) :=\sum_{n \leq x}a_n$ via   the Euler-Maclaurin summation formula \cite[Lemma 4.1]{IwaKow04}:  for integers $a<b$ and a $C^1$ function $\phi$ on $[a,b]$, 
\beq\label{EulerMaclaurin}
\sum_{a< n \leq b}\phi(n) = \int_a^b \phi(u)  du + \tfrac{1}{2}\left(\phi(b) - \phi(a)\right)+\int_a^b   \phi'(u)\left(\{u\}-\tfrac{1}{2}\right)du.
\eeq
Here $\{u\} = u - \lfloor u\rfloor$ is the fractional part of $u$. Applying this with $\phi(u) = 1+ \cos (\log u)$, 
so that $|\phi'(u)|\leq 1/u$, yields
\begin{align*}
   a(x) & =\sum_{1 \leq n \leq x}a_n
   = \int_1^x \left(1+\cos(\log u)\right) du + O(1) + O\left(\int_1^x \frac{du}{u}\right)
     \\
   &  = x + \tfrac{1}{2} x \left(\cos (\log x) + \sin (\log x)\right) + O(\log x) = x + \tfrac{1}{\sqrt{2}}x \cos (\log x - \pi/4) + O(\log x).
\end{align*}
In particular $\lim_{x \maps \infty} x^{-1}a(x)$ does not exist.
This completes the proof of Proposition \ref{prop_limit_of_Kronecker}.

To understand the underlying reason that this limit does not (indeed cannot) exist, the following fact is helpful:  if (\ref{lim_known}) is true for a Dirichlet series $D(s)$ that has a (simple) pole at $s=1$, then 
a short argument proves that $D(s)$ can have no pole on the line $\Re(s)=1$ aside from at $s=1$. This follows from the arguments presented in \cite[Ch. II \S 9-10]{Ing32}.
In particular, since the example $A(s)$ constructed in (\ref{Ingham_example}) has three poles on the line $\Re(s)=1$, this also confirms that $\lim_{x\maps \infty} a(x)/x=c_0$ cannot hold for any $c_0$.

\section{Hypothesis \ref{hyp:Weak}: Preliminaries for   Theorem \ref{thm:Weak}}\label{sec_thm_weak}

We prove \cref{thm:Weak} by adapting a special case of the   Ikehara-Delange method, developed in the original work of Delange \cite{Del54,Del55}, and also reported on by Narkiewicz \cite[Theorem 3.8 and Corollary]{Nar83}. 
In this section, we provide an intuition for how to express Theorem \ref{thm:Weak} in terms of Laplace transforms, and  formally state the essential ingredient,   Theorem \ref{thm_Delange}, which is  functional analytic in nature. We then deduce Theorem \ref{thm:Weak} and Corollary \ref{cor_thm_Delange_Stark} from Theorem \ref{thm_Delange}, and explain the intuition behind this theorem, before turning to the rigorous proof  
 in \S \ref{sec_Delange}. Delange's work proves far more general results than we explain in this paper  (see \S \ref{sec_lit_HypA}); our explanation can be used as a blueprint to understand the more general work.  
 
\subsection{Framing the problem with Laplace transforms}\label{sec_thm_weak_intuition}
Let $A(s) = \sum_n a_n \lam_n^{-s}$ satisfy  Hypothesis \ref{hyp:Weak} for $\al>0$ and an integer $m \geq 1$. Under this hypothesis,  $A(s) = g(s)/(s-\alpha)^m + h(s)$ where $g$ is a polynomial of degree at most $m-1$ with $g(\al) \neq 0$ and $h$ is holomorphic in $\Re(s) > \alpha$ and continuous in $\Re(s) \geq \alpha$.    Since by hypothesis $A(s)$ has a pole at $s =\al$ (so in particular $A(s) \not\con 0$) and the coefficients $\{a_n\}$ are non-negative, it follows that $A(\sig)$ is positive for all real $\sig>\al$. In particular,  $\lim_{\sig \maps \al^+}(\sig-\al)^mA(s) = g(\al) >0$.  

For $t \geq 0$, let 
\beq\label{a_partial_sums_exp}a(t):=\sum_{\lam_n\le \exp(t)}a_n.\eeq
 (Setting $x=\exp(t)$ reveals the original partial sum (\ref{eqn:original-sum}).) Since each $a_n \geq 0$, $a(t)$ is non-decreasing for $t\ge 0$ (and is non-negative).  
In the present notation, Theorem \ref{thm:Weak} (after the substitution $x=\exp(t)$) claims that 
\beq\label{thm_weak_claim}
a(t) = \left(c_0+o(1)\right)\exp(\al t)\frac{t^{m-1}}{(m-1)!} \qquad \text{as $t \maps \infty$, with $c_0:=g(\al)/\al$.}
\eeq

Recall from the comment after Hypothesis \ref{hyp:Weak} that the Dirichlet series $A(s)$ is   convergent for $\Re(s)>\al$, hence  by (\ref{Dir_series_as_Laplace_transf}), in this region
\beq\label{A_Laplace}
\frac{A(s)}{s} =\int_{0}^{\infty}a(t)\exp(-st)\, dt.
\eeq
 (Since $\alpha>0$, $1/s$ does not introduce a pole in the region $\Re(s)\geq \al$.) That is, the Laplace transform of $a(t)$ is $A(s)/s.$
 Roughly speaking, our goal is to invert the Laplace transform, so that properties of $a(t)$ can be deduced from properties of $A(s)$. 

Recall from   Lemma \ref{lemma_bG}  that the Laplace transform of the function $t^{m-1}/(m-1)!$ is $s^{-m}$, which we can think of as the generic function with an order $m$ pole at $s=0$.
This is familiar: upon setting $x=\exp(t)$, the computation in Remark \ref{remark_residue} shows that the highest-order contribution to the residue of $A(s)\exp(st)/s$ at $s=\al$ takes the form $c_0\exp(\al t)\frac{t^{m-1}}{(m-1)!}$.   This suggests that we should compare $A(s)/s$ to $c_0(s-\al)^{-m}$ in terms of Laplace transforms. Precisely,
evaluating the identity (\ref{generic_pole_identity}) at $s-\al$ shows that
\beq\label{generic_pole_identity_shifted}  \frac{1}{ (s-\al)^m} =\int_{0}^{\infty}\frac{1}{(m-1)!}t^{m-1}\exp(-(s-\al)t)\,dt \quad \text{for all $s$ with $\Re(s)>\al$.}
\eeq
Rescaling this by $c_0$ and differencing   with $A(s)/s$ as expressed in (\ref{A_Laplace}) shows that for $\Re(s)>\al$,
\begin{align}\label{A_diff}
     \frac{A(s)}{s} - \frac{c_0}{(s-\al)^m} & = \int_0^\infty \left( a(t)  - \frac{c_0}{(m-1)!} \exp(\al t) t^{m-1} \right) e(-st)dt  \\
     & =  \int_0^\infty b(t)\exp(\al t) \left[ \frac{a(t)}{b(t)\exp(\al t)}  - c_0 \right] e(-st)dt, \nonumber
\end{align}
with $b(t) :=  \frac{1}{(m-1)!}  t^{m-1},$ say. 

\subsubsection{Heuristic idea}  Suppose for the purpose of illustration that the Laurent series for $A(s)/s$ at $s=\al$ takes the form $\frac{c_0}{(s-\al)^m} + h(s)$ where $h(s)$ is holomorphic for $\Re(s)> \al$ and continuous for $\Re(s) \geq \al$. Then   the left-hand side of (\ref{A_diff}) is continuous for $\Re(s) \geq \al$; if the identity were verified  in the larger region $\Re(s) \geq \al$,  the right-hand side would be continuous there as well, and in particular would be a convergent integral at $s=\al$. This suggests the term in square brackets must be $o(1)$ in order for the integral to converge when $s=\al$. We recognize this as the claim (\ref{thm_weak_claim}) of Theorem \ref{thm:Weak}.
More generally, if the Laurent series expansion for $A(s)/s$ takes the form 
\[ \frac{c_0}{(s-\al)^{m}} +\frac{c_1}{(s-\al)^{m-1}}+ \cdots + \frac{c_{m-1}}{(s-\al)} + h(s),\]
for some constants $c_1,\ldots, c_{m-1}$, and with $h(s)$  holomorphic for $\Re(s)> \al$ and continuous for $\Re(s) \geq \al$,  then the left-hand side of (\ref{A_diff}) has a pole of order at most $m-1$, and we aim to arrive at the same conclusion. 
In either case, to extract the conclusion that $a(t) = (c_0+o(1))\exp(\al t)b(t)$ as $t \maps \infty$, thereby proving Theorem \ref{thm:Weak}, we will apply the functional analytic result stated next as Theorem \ref{thm_Delange}.

\subsection{The functional analytic result}

 \begin{thmnum}\label{thm_Delange}
Let $b(t)$ be a measurable real-valued function on $[0,\infty)$, bounded on any finite interval and moreover, with Laplace transform
\[
G(s) = \int_{0}^{\infty}b(t)\exp(-st)\,dt
\]
that is convergent for $\Re(s)>0$.  Suppose that for a fixed integer $m \geq 1$,   
    \beq\label{b_lim_hypothesis} 
    \lim_{t \maps \infty} \frac{b(t)}{t^{m-1}}  = \frac{1}{(m-1)!}= \frac{1}{\Gamma(m)}.
    \eeq
Let $a(t)$ be a   non-decreasing real-valued function on $[0,\infty)$, with Laplace transform 
\[
f(s) = \int_{0}^{\infty}a(t)\exp(-st)\,dt
\]
that is  convergent for $\Re(s)>\alpha$, for a fixed real $\al>0$.  Suppose that for $m \geq 1$ as above and real constant $c_0>0$, the function defined by
\[
F(s) = f(s) - c_0G(s-\alpha)
\]
has the property that
\[ F(s) = \frac{\tilde{g}(s)}{(s-\al)^{m-1}} + \tilde{h}(s)\]
for a polynomial $\tilde{g}(s)$ of degree at most $m-2$, and a function $\tilde{h}(s)$ that is holomorphic for complex numbers $s$ with $\Re(s)>\al$ and continuous for $\Re(s) \geq \al$. 
(By convention, $\tilde{g}(s)\con 0$ if $m=1$.) Then   
\[
a(t)=(c_0+o(1))\exp(\alpha t)b(t) \qquad \text{as $t\to\infty$.}
\]
\end{thmnum}
We can summarize the hypothesis on $F(s)$ (slightly loosely) as saying that $F(s)$ is holomorphic on $\Re(s) > \al$, and continuous on $\Re(s) \geq \al$, except for a pole of order at most $m-1$ at $s=\al$.

 We will prove Theorem \ref{thm_Delange} in \S \ref{sec_Delange}. In that section, we generally follow the strategy described in Narkiewicz's exposition from \cite[Thm. 3.8]{Nar83}; that presentation is itself closely based on \cite{Del54}.  We deviate from the strategy of \cite[Thm. 3.8]{Nar83} in two respects: first, we strengthen the theorem, since we only assume continuity on the line $\Re(s) = \al$, whereas the original result assumes holomorphicity in $\Re(s) \geq \al$ aside from a pole of specified order at $s=\al$. This strengthening allows us to obtain Corollary \ref{cor_thm_Delange_Stark}. Second, we fix the proof of the result we call Lemma \ref{lemma_h_convergence}; see Remark \ref{remark_Narkiewicz}.

 \subsection{Consequences of the functional analytic result}
Theorem \ref{thm_Delange} immediately implies Theorem \ref{thm:Weak}.

\begin{proof}[Deduction of Theorem \ref{thm:Weak}]
Define $b(t)=t^{m-1}/(m-1)!$ and its Laplace transform $G(s) = s^{-m}$ for $\Re(s)>0$ as in Lemma \ref{lemma_bG}.
Under Hypothesis \ref{hyp:Weak} for $\al>0$ and an integer $m \geq 1$, for $\Re(s)>\al$ we may write
\[ A(s) = \frac{g(s)}{(s-\al)^m}   + h(s)\] 
for a polynomial $g$ of degree at most $m-1$ with $g(\al) \neq 0$ and a function $h(s)$ that is holomorphic for $\Re(s)>\al$ and continuous for $\Re(s) \geq \al$. 
  Let $a(t)$ denote the partial sum as in (\ref{a_partial_sums_exp}),  and define the Laplace transform of $a(t)$ by 
\begin{equation}\label{integralA}
    f(s):=   \frac{A(s)}{s}=  \int_{0}^{\infty}a(t)\exp(-st)\,dt,
\end{equation}
which  is convergent for $\Re(s)>\alpha$ by Kronecker's lemma, as in (\ref{A_Laplace}). 

 As shown in \S \ref{sec_thm_weak_intuition}, $g(\al) = \lim_{\sig \maps \al^+} (\sig-\al)^m A(\sig)> 0,$ so we may define $c_0 := g(\al)/\al>0$. Consider 
\[
F(s):=f(s) - c_0G(s-\alpha) = \frac{g(s)}{  s(s-\alpha)^m} -\frac{c_0}{(s-\alpha)^m}+ \frac{h(s)}{s} = \frac{\al g(s)-s g(\al)}{\al s(s-\alpha)^m}  + \frac{h(s)}{s}.
\]  
Note that the function 
\[ g_1(s) := \frac{\al g(s) -sg(\al)}{s-\al}\]
is holomorphic in $\Re(s) \geq \al$ since the numerator is holomorphic in this region and vanishes at $s=\al$. (Indeed, since $g(s)$ is a polynomial, this reasoning shows $g_1(s)$ is entire, although we do not require the full strength of this property.) 
Thus the function $\tfrac{g_1(s)}{\al s}$ is also holomorphic for $\Re(s) \geq \al$, and its Taylor series about $s=\al$ may be written as $\sum_{j=0}^{m-2} c_j(\al)(s-\al)^j +\sum_{j \geq m-1} c_j(\al)(s-\al)^j=: \tilde{g}(s) + \tilde{h}(s)$, in which $\tilde{g}(s)$ is a polynomial of degree $m-2$ and $\tilde{h}(s)(s-\al)^{-(m-1)}$ is holomorphic for $\Re(s) \geq \al$. Consequently, using this notation,
\[ F(s) = \frac{\tilde{g}(s)}{(s-\al)^{m-1}} + \left( \frac{h(s)}{s} + \frac{\tilde{h}(s)}{(s-\al)^{m-1}}\right)\]
 satisfies the hypothesis of Theorem \ref{thm_Delange}.
 Thus by the theorem, 
\[ a(t) = (c_0+o(1))\exp(\al t)b(t) = (c_0+o(1)) \exp(\al t) \tfrac{t^{m-1}}{(m-1)!}.\]
Making the change of variable $x = \exp(t)$ yields the conclusion of Theorem \ref{thm:Weak}:
\[ \sum_{\lam_n \leq x} a_n \sim \tfrac{g(\al)}{\al (m-1)!} x^\al (\log x)^{m-1} \qquad \text{as $x \maps \infty$.}\] 

 \end{proof}
 
We also state another immediate corollary of Theorem \ref{thm_Delange}.
 
\begin{cornum}\label{cor_thm_Delange_Stark}
    Let $a(t)$ be a non-decreasing real-valued function on $[0,\infty)$ with Laplace transform
    \[ f(s) = \int_0^\infty a(t) \exp(-st)dt.\]
Let $\al>0$ and an integer $m \geq 1$ be given, as well as a polynomial 
\[P(s)= c_{0} + c_{1}(s-\al)+ c_{2}(s-\al)^2 + \cdots +c_{m-1}(s-\al)^{m-1}\]
with $c_{0} \neq 0.$
If $f(s) - P(s)(s-\al)^{-m}$ is holomorphic for $\Re(s)>\al$ and continuous for $\Re(s) \geq \al$ then 
\[ a(t) = \left(\tfrac{c_{0}}{(m-1)!} + o(1)\right) \exp(\al t)t^{m-1} \qquad \text{as $t \maps \infty$.}\]
\end{cornum}
\begin{remark}[A result of Stark]\label{remark_Stark}
This Tauberian theorem has been used in the literature, and attributed to unpublished notes of H. Stark (which we have not seen). See a statement without proof in \cite[``Tauberian Theorem (Stark)'', \S 2.3]{DGH03}. To derive exactly the statement recorded there, set $x=\exp(t)$ in this corollary  so that in their notation, $a(t):= S(e^t) = S(x)$, and the conclusion is $S(x) \sim \tfrac{c_{0}}{(m-1)!} x^\al (\log x)^{m-1}$ as $x \maps \infty$.
\end{remark}

   \begin{remark}\label{remark_Stark_constant}
   The assumptions of \cref{cor_thm_Delange_Stark} imply $c_0 > 0$.   Since $a(t)$ is non-decreasing on $[0,\infty)$, then writing $a(t) = (a(t) - a(0)) + a(0) =: \tilde{a}(t) + a(0)$ provides a non-negative function $\tilde{a}(t)$ on $[0,\infty)$. Accordingly, we can write $f(s) = \tilde{f}(s) + u(s)$ where $\tilde{f}$ is the Laplace transform of $\tilde{a}(t)$ and $u(s)=a(0)/s$ is the Laplace transform of $a(0)$ (by Lemma \ref{lemma_bG} for $m=1$). In particular, $u(s)$ is holomorphic for $\Re(s)>0$. Combined with the hypotheses of the corollary, this implies that $\tilde{f}(s) - P(s)(s-\al)^{-m}$ is holomorphic for $\Re(s)>\al$ and continuous for $\Re(s) \geq \al$. Then \[
    c_0 = P(\alpha) = \lim_{\sigma \to \alpha^+} P(\sigma) = \lim_{\sigma \to \alpha^+}  (\sigma-\alpha)^m \tilde{f}(\sigma)  \geq 0 , 
    \]
    because the non-negativity of $\tilde{a}(t)$ implies $\tilde{f}(\sigma) \geq 0$ for all real $\sigma > \alpha$. Finally, since the corollary assumes $c_0 \neq 0$, we may conclude $c_0>0$. 
\end{remark}

\begin{proof}[Proof of Corollary \ref{cor_thm_Delange_Stark}]
Define $b(t)=t^{m-1}/(m-1)!$ and its Laplace transform $G(s) = s^{-m}$ for $\Re(s)>0$ as in Lemma \ref{lemma_bG}.
Define $h(s) = f(s) - P(s)(s-\al)^{-m}$ so that $h(s)$ is holomorphic for $\Re(s)>\al$ and continuous in $\Re(s) \geq \al$. Also define 
\[ F(s) = f(s)  - c_0G(s-\al)
= \frac{c_{1}}{(s-\al)^{m-1}} + \cdots + \frac{c_{m-1}}{(s-\al)} + h(s).\]
 Then $F(s)$ satisfies the hypotheses of Theorem \ref{thm_Delange} with constant $c_0$, and the corollary is proved. 
 \end{proof}

\subsection{Initial reductions}\label{sec_reductions} 
We prepare for the proof of Theorem \ref{thm_Delange} by making two  reductions. 

The first reduction is that it suffices to prove the theorem in the case that $a(t)$ is non-negative on $[0,\infty)$; this was observed by Delange \cite[footnote (1) p. 20]{Del54}.
For suppose that $a(t)$ is a non-decreasing real-valued function on $[0,\infty)$; then the function $\tilde{a}(t):=a(t)-a(0)$ is a   non-decreasing real-valued function that is moreover non-negative on $[0,\infty)$. Define $\tilde{f}(s)$ to be the Laplace transform of $\tilde{a}(t)$, so that $\tilde{f}(s) = f(s) - a(0)/s$ (as in Remark \ref{remark_Stark_constant}).  Suppose that $f(s), G(s)$ and $F(s) = f(s) - c_0 G(s-\al)$ satisfy the hypotheses of Theorem \ref{thm_Delange}. Then   $\tilde{F}(s):=\tilde{f}(s) - c_0G(s-\al)$ does as well, since $\tilde{F}(s) = f(s) - c_0 G(s-\al) - a(0)/s$, in which $a(0)/s$ is holomorphic for $\Re(s) \geq \al >0$.  Thus by applying the special case of the theorem to $\tilde{f}(s)$ we deduce an asymptotic  $\tilde{a}(t) \sim c_0 \exp(\al t)b(t)$ as $t \maps \infty$, and this implies the same asymptotic holds for $a(t)$ as $t \maps \infty$ (since $\al>0$ and the functions $a(t)$ and $\tilde{a}(t)$ only differ by a constant). 
 
The second reduction is that it suffices to prove the theorem in the case $c_0=1$. 
For if the hypotheses of the theorem hold for a given $c_0>0$, and $a(t)$ is non-negative and non-decreasing, let $\tilde{a}(t) = a(t)/c_0$ (which is again non-negative and non-decreasing). Then set $\tilde{f}(s)=f(s)/c_0$ to be the Laplace transform of $\tilde{a}(t)$. Under the hypothesis that $F(s):=f(s) - c_0G(s-\al)$ satisfies  the hypotheses of Theorem \ref{thm_Delange}, it follows that
 $ \tilde{F}(s):= F(s)/c_0 = \tilde{f}(s) - G(s-\al)$ does as well. 
By applying Theorem \ref{thm_Delange} to $\tilde{F}(s)$, we learn that $\tilde{a}(t) = (1+o(1))\exp(\al t) b(t)$ as $t \maps \infty$. Thus $a(t) = (c_0+o(1))\exp(\al t) b(t)$ as $t \maps \infty$,  as desired. 

From now on we will consider only the case of Theorem \ref{thm_Delange} when the given function $a(t)$ is non-negative and non-decreasing, and the constant $c_0=1$. 

\subsection{Strategy for the proof of  Theorem \ref{thm_Delange}}\label{sec_Delange_intuition}

\subsubsection{Setup}\label{sec_setup_ThmDelange}
We recall the strategy for proving Theorem \ref{thm_Delange} indicated in (\ref{A_diff}). 
Under the hypotheses of the theorem, we now write this more carefully. Define $v(t)$ in the range $t \in [0,\infty)$ as the truncated Laplace transform
\beq\label{v_dfn}
v(t) :=  \int_0^{1} u^{m-1} \exp (-tu)du,
\eeq
which  is a non-negative, decreasing function of $t$. 
  The following lemma shows that we may think of $tv(t)$ as the ``inverse'' of $b(t)$ in the limit. 
 \begin{lemnum}\label{lemma_v_limit}
For   $m \geq 1$ and $v(t)$ as defined above, 
\beq\label{v_limit} 
\lim_{t \maps \infty} t^mv(t)=\Gamma(m).
\eeq
 Consequently, $ \lim_{t \maps \infty}  t  v(t) b(t)= 1$ if and only if
\[ \lim_{t \maps \infty} \frac{b(t)}{t^{m-1}}  =  \frac{1}{\Gamma(m)}.\] 
    \end{lemnum}
    
\begin{proof}
For the first claim,
\[
\lim_{t \maps \infty} t^m v(t) = \lim_{t \maps \infty}  \int_0^{1} (tu)^{m} \exp(-tu) \frac{du}{u} =  \lim_{t\maps \infty} \int_0^{t} v^{m-1} \exp (-v) dv =   \Ga(m).\]
The second claim follows, upon considering $\lim_{t \maps \infty}tv(t)  b(t)  =\lim_{t \maps \infty}t^mv(t) \cdot b(t)/t^{m-1}$.
\end{proof}
 
Define 
\[J(t) := tv(t) \exp(-\al t) a(t).\]
Then to prove Theorem \ref{thm_Delange}, it suffices to prove
$
\lim_{t \maps \infty} J(t)=1.
$
\subsubsection{Heuristic ideas, part I}\label{sec_motivate_h}
 In this subsection, we motivate the construction of a function $H(s)$; we will justify each step later in \S \ref{sec_Delange}.
The hypothesis of the theorem provides a property of 
\[ F(s) = f(s) - G(s-\al) = \int_0^\infty \exp(-st)[a(t) - \exp(\al t)b(t)]dt.\] 
Now, being more careful than (\ref{A_diff}), we will not divide by $b(t)$ within the integrand, but instead multiply by $tv(t)$. We can do so in two steps. First, differentiate this expression once with respect to $s$ to pull down a factor of $t$:
\[ F'(s) = -\int_0^\infty \exp(-st)t[a(t) - \exp(\al t)b(t)]dt,\] 
which is valid in the region $\Re(s)>\al$ where $F(s)$ is holomorphic. Second, to introduce $v(t)$ to the integrand we integrate $F'(s+u)$ against $u^{m-1}$ over $u \in [0,1]$. Precisely, for any $\Re(s)>\al$ we are motivated to define 
\[ H(s) := - \int_0^{1} F'(s+u)u^{m-1}du
= \int_0^\infty \exp(-st) tv(t)[a(t) -\exp(\al t)b(t)]dt.\]
Here the interchange of the order of integration will be justified, since for a fixed $s$ with $\Re(s)>\al$, the integral expressing $F'(s)$ converges uniformly for $u \in [0,1]$. Now we recognize that 
\beq\label{h_J_v} 
H(s)= \int_0^\infty \exp(-(s-\al)t)[tv(t)\exp(-\al t) a(t)  -tv(t)b(t)]dt \eeq
so that in particular for $s=\al+\ep$ for a given $\ep>0$,
\[  
H(\al+\ep) = \int_0^\infty \exp(-\ep t)[J(t)  -tv(t)b(t)]dt.\]
Recalling that $tv(t) b(t) \sim 1$, we would like to extract from properties of $H(\al+\ep)$ that $J(t) -1 = o(1)$ as $t \maps \infty$.  

\subsubsection{Heuristic ideas, part II}  In this subsection, we continue to proceed heuristically; we will justify each step, and the convergence of integrals, in the rigorous proof in \S \ref{sec_Delange}. By hypothesis, $F(s)$ has   a pole of order at most $m-1$ at $s=\al$, so $H(s)$ could have a pole  at $s=\al$, and we cannot easily study $H(\al +\ep)$ as $\ep \maps 0^+$ along the real line. Instead,   Delange's strategy uses three ideas. First: avoid the pole by taking limits along horizontal lines strictly above and below the real line. That is, show that for each real $y \neq 0$, $\lim_{\ep \maps 0^+}H(\al + \ep +iy)$ is well-defined, and in fact defines a function $H_0(y)$ that is continuous for $y \neq 0$ and integrable on $[-L,L]$. Second: dilute the effect of the pole by averaging along vertical lines. That is, study the average
\[ \int_{-L}^L H(\al + \ep +iy) W(y) dy,\] 
for a continuous kernel function $W$  that is compactly supported in some interval centered at the origin, which we have depicted here by $(-L,L)$. 
Third: use the Dominated Convergence Theorem to pass the limit $\ep \maps 0^+$ inside this average. This is the essential way Delange's method ``works around'' the pole at $s=\al$, without assuming we can work in any region to the left of $\Re(s)=\al$. This step requires showing that there is an integrable function in $y$ that dominates $H(\al+\ep+iy)$, uniformly for all $\ep>0$. Once all of these steps are rigorously carried out, the conclusion is that 
\[ \lim_{\ep \maps 0^+} \int_{-L}^L H(\al + \ep +iy) W(y) dy
 = \int_{-L}^L \lim_{\ep \maps 0^+} H(\al + \ep +iy)W(y)dy = \int_{-L}^L H_0(y) W(y)dy. \] 
Plugging in the definition (\ref{h_J_v}) for $H(\al + \ep +iy)$ and rearranging, we will interpret this as the statement that 
 \begin{multline*}  \lim_{\ep \maps 0^+} \int_{-L}^L \int_0^\infty \exp( -(\ep+iy)t)J(t) dt W(y)dy
\\  = \int_{-L}^{L}H_0(y)W(y)dy +  \lim_{\ep \maps 0^+} \int_{-L}^L \int_0^\infty \exp(-(\ep+iy)t) tv(t)b(t)dt W(y)dy.\end{multline*}
As in \S \ref{sec_Fourier_transform}, denote the Fourier transform $\Fscr(W)(t) = \int_{-L}^L W(y)\exp(-ity)dy$.
Suppose for the purpose of illustration that we may freely interchange the order of integration in the identity above; then this reveals
 \begin{multline*} 
 \lim_{\ep \maps 0^+}  \int_0^\infty \Fscr(W)(t) \exp( -\ep t)J(t) dt  
\\  = \int_{-L}^{L}H_0(y)W(y)dy +  \lim_{\ep \maps 0^+}  \int_0^\infty \Fscr(W)(t) \exp(-\ep t) tv(t)b(t)dt .\end{multline*}
We define $ \int_0^\infty \Fscr(W)(t) J(t) dt$ to be the expression on the left-hand side.
If we further assume that $W$ is defined appropriately so that $\Fscr(W)$ is an integrable function, then using that $tv(t)b(t) \sim 1$ as $t \maps \infty$ (so in particular it is a bounded function) we can apply the Dominated Convergence Theorem in the last term, replacing this term by $\int_0^\infty \Fscr(W)(t)  tv(t)b(t)dt$. At this point the argument yields:
\[  \int_0^\infty \Fscr(W)(t) J(t) dt
 =\int_{-L}^{L}H_0(y)W(y)dy+\int_0^\infty \Fscr(W)(t)  tv(t)b(t)dt.\]

We still need to conclude that $J(t) \sim 1$ as $t \maps \infty$. Let us exploit the remaining hypothesis that $tv(t)b(t) \sim 1$. One idea is to ``push $t$ to infinity'' at every point in the region of integration on the last term on the right-hand side: for example, replace $t=u+\xi$ for a fixed real $\xi$ that we will later let tend to infinity.
To carry this out, it is convenient to denote $\Fscr(W)(t)=K(t-\xi)$ for some function $K$ that we assume is integrable.
Then, for this fixed $\xi$,
\[ \int_0^\infty \Fscr(W)(t)  tv(t)b(t)dt
 = \int_{0}^\infty K(t-\xi) tv(t)b(t)dt = 
 \int_{-\infty}^\infty K(u) \om(u,\xi)du,\] 
 in which we simply define $u=t-\xi$ and $\om(u,\xi) = (u+\xi)v(u+\xi)b(u+\xi)$ for each $u \geq -\xi$, and $\om(u,\xi)=0$ for $u<-\xi$. By hypothesis, as $\xi \maps \infty$, $\om(u,\xi) \sim 1$ for each fixed $u$. Since $K(u)$ is an integrable function,  the Dominated Convergence Theorem shows
 \[ \lim_{\xi \maps \infty} \int_{-\infty}^\infty K(u) \om(u,\xi)du =   \int_{-\infty}^\infty K(u) du.\]  
On the other hand, according to the definition $\Fscr(W)(t)=K(t-\xi)$   we can take the Fourier inverse as in \S \ref{sec_Fourier_transform} to compute that  \[ W(y) = \Fscr^{-1}(K(\cdot - \xi))(y) = \Fscr^{-1}(K)(y) \exp(i\xi y).\]
Taking stock of every property we have assumed so far, this line of argument leads to the conclusion  that if $K(y)$ is an integrable function such that $\Fscr^{-1}(K)$ is a continuous function of compact support in $(-L,L)$, for the continuous function $H_0$ we have:
\[ \lim_{\xi \maps \infty} \int_{-\infty}^\infty K(t-\xi)J(t)dt =  \lim_{\xi \maps \infty} \int_{-L}^L H_0(y) \Fscr^{-1}(K)(y) \exp(i\xi y) dy + \int_{-\infty}^\infty K(u)du.\] 
Since $H_0$ is continuous and integrable, the Riemann-Lebesgue lemma shows that the first term on the right-hand side vanishes. 
 (In some sense, this application of the Riemann-Lebesgue lemma can be seen as a reason that this approach does not yield lower-order terms in the asymptotic.)
If we add one more condition on the kernel $K$, that $\int K=1$, this line of argument leads to the conclusion
\beq\label{KJ_explain}
\lim_{\xi \maps \infty} \int_{-\infty}^\infty K(t-\xi)J(t)dt =  1.\eeq
This additional condition $\int K=1$ is in fact compatible with the well-known theory of approximations to the identity, which are  needed for the final step: to  pick out the value of $J(t)$ (for large $t$) from the average (\ref{KJ_explain}).  

\subsubsection{Approximations to the identity} 
An approximation to the identity  on $\R$ is a family of kernel functions $K_\lam$   defined according to a real parameter $\lam>0$, with the following three properties:
 \begin{enumerate}[(i)]
\item $\int_\R K_\lam(x)dx=1$ for all $\lam>0$.
\item $\int_\R |K_\lam(x)| dx \ll 1$ uniformly in $\lam>0$.
\item For every $R>0$, $\int_{|x| \geq R}|K_\lam(x)|dx \maps 0$ as $\lam \maps \infty$.
\end{enumerate} 
A classical property is that for an approximation to the identity $K_\lam$, for each integrable function $f$, 
\beq\label{approx_id_app}
\lim_{\lam \maps \infty} \int_{-\infty}^\infty K_\lam(x-t)f(t)dt = f(x), \qquad \text{for almost every $x$;}
\eeq
see \cite[Ch. 3 Thm. 2.1]{SSReal}. (In the additive convolution notation of \S \ref{sec_additive_convolution}, this is the statement $\lim_{\lam \maps \infty} (K_\lam \star f)(x) = f(x)$ for a.e. $x$.) The idea is that as $\lam \maps \infty$,  the average becomes more concentrated or peaked at $t=x$, so that  the average  better approximates the   value of $f$ at the point $x$. Let us pause to observe that there are many approximations to the identity; for example:

\begin{lemnum}[Approximation to the identity]\label{lemma_approx_id}

 (I)  Let $K(\xi) =    \frac{1}{\pi} \tfrac{ \sin^2 \xi}{  \xi^2}$ denote the normalized Fej\'{e}r kernel, and define $K_\lam(\xi) := \lam K(\lam \xi)$. Then $K_\lam$ is an  approximation to the identity.
 (II) Let  $K$ be a Schwartz function with $\int K(\xi)d\xi=1$, and define $K_\lam(\xi) := \lam K(\lam \xi)$. Then $K_\lam$ is an approximation to the identity.
     \end{lemnum}
     \begin{proof}
The arguments run in parallel. To verify  property (i), rescaling shows that  $\int K_\lam(\xi ) d\xi = \int \lam K( \lam \xi) d\xi = \int K(\xi) d\xi $. This evaluates to 1 by applying (\ref{Fejer_integral}) for the Fej\'er kernel, or by the hypothesis, in  the Schwartz function case.  For (ii), the Fej\'er kernel is non-negative, so (i) implies (ii). For the Schwartz function, it is integrable so that  $\int |K_\lam (\xi)|d\xi = \int |K(\xi)|d\xi$ is bounded. For (iii),  for every $R>0$,  
  \[ \int_{|\xi| \geq R} |K_\lam(\xi)| d\xi =  \int_{|u| \geq \lam R}|K(u)| du.\]
  For the normalized Fej\'er kernel or the Schwartz function, $K$ is an integrable function so that this tail vanishes as $\lam \maps \infty$.
     \end{proof}

\subsubsection{Heuristic ideas, part III} 
 In this subsection, we continue with the heuristic sketch of the main strategy.
We return to   (\ref{KJ_explain}), in which our goal is to pick out the value of $J(t)$ for large $t$.
We cannot simply apply the result (\ref{approx_id_app}) for an approximation to the identity, since $J(t)$ need not be integrable. Nevertheless, we will apply the identity (\ref{KJ_explain}) with an approximation to the identity $K_\lam$ in place of $K$,   so that $K_\lam(t-\xi)$ becomes more concentrated or peaked at $t=\xi$ as $\lam \maps \infty$, and (\ref{KJ_explain}) should eventually reveal that $J(\xi) \sim 1$ as $\xi \maps \infty$. We will impose that $K_\lam$ is even, so that $K_\lam(t-\xi) = K_\lam(\xi-t)$. We will also require that $K_\lam$ is non-negative, so that we can later apply   the monotonicity of $a(t)$ and $v(t)$ to extract an asymptotic for $J(t)$ from (\ref{KJ_explain}) by hand. Recall that earlier in the argument we have also required that $\Fscr^{-1}(K_\lam)$ should be continuous of compact support. Fortunately,  approximations to the identity with all these properties do exist.

\subsubsection{Allowable approximations to the identity} 
We define that an approximation to the identity $K_\lam$ is \emph{allowable} if 
 \beq\label{allowable_dfn}
 K_\lam(\xi) = \lam K(\lam \xi)
 \eeq 
 for an even, non-negative real-valued function $K$, such that  $\Fscr^{-1}(K)(x)$ is a continuous function of compact support. Note that by rescaling property (i) of an approximation to the identity, $\int K(\xi) d\xi=1$. 
 Let us establish once and for all the notation that for an allowable approximation to the identity, 
 \[ k(x):= \Fscr^{-1}(K)(x), \qquad  k^\lam(x) : =\Fscr^{-1}(K_\lam)(x)= k(x/\lam).\]
 In particular, if $k(x)$ is supported in $[-\del_0,\del_0]$ then $k^\lam(x)$ is supported in $[-\del_0\lam,\del_0\lam]$.

  To prove Theorem \ref{thm_Delange}, we may work with any allowable approximation to the identity $K_\lam$, and we provide a large class of  options:
\begin{lemnum}[Allowable approximation to the identity]\label{lemma_allowable_approx_id}

(I) Let $K(\xi)= \frac{1}{\pi} \tfrac{ \sin^2 \xi }{\xi^2}$ denote the normalized Fej\'{e}r kernel, and define  $K_\lam (\xi)=\lam K(\lam \xi)$. Then $K_\lam$  is an allowable approximation to the identity, with $k^\lam$ supported in $[-2\lam,2\lam]$.

(II) Given any $\del_0>0$, there is an allowable approximation to the identity $K_\lam$ such that $K$ is a Schwartz function, and $k^\lam$ is a Schwartz function supported in $[-\del_0 \lam,\del_0 \lam]$. 
\end{lemnum}

\begin{proof}
(I) By Lemma \ref{lemma_approx_id}, $K_\lam$ is an approximation to the identity, and visibly it is an even, non-negative function.   By the function pair recorded in (\ref{Fejer_dfn}), $k^\lam(x) = \tfrac{1}{2}(1 - \tfrac{|x|}{2\lam})\mathbf{1}_{[-2\lam,2\lam]}(x)$, which is a continuous function of compact support in $[-2\lam,2\lam]$.

(II) Given any $\del_0>0$, and any even, non-negative Schwartz function $\eta$ compactly supported in $[-\del_0/2,\del_0/2]$,   Lemma \ref{lemma_build_kernel} provides a compactly supported, even, non-negative Schwartz function $k$ supported in $[-\del_0,\del_0]$ whose Fourier transform $K= \Fscr(k)$ is an even, non-negative Schwartz function with $\int K=1$. By Lemma \ref{lemma_approx_id}, it follows that $K_\lam(\xi) = \lam K(\lam \xi)$ is an approximation to the identity; by  construction $K_\lam$ is moreover allowable.
\end{proof}

\section{Hypothesis \ref{hyp:Weak}:  Proof of Theorem \ref{thm_Delange}}\label{sec_Delange}

We now turn to a rigorous proof of Theorem \ref{thm_Delange}. We will follow the strategy of Delange \cite[Thm. 1]{Del54} as presented in Narkiewicz \cite{Nar83}, except where they employ the Fej\'er kernel, we will employ a fixed allowable approximation to the identity, as defined in (\ref{allowable_dfn}) and provided by  Lemma \ref{lemma_allowable_approx_id}.

\subsection{The main asymptotic via an allowable approximation to the identity}\label{sec_main_asymptotic_weak}
Recall from the reductions in \S \ref{sec_reductions} that we need only prove Theorem \ref{thm_Delange} when the given function $a(t)$ is non-negative and non-decreasing, and the constant $c_0=1$. 
Recall from the setup in 
\S \ref{sec_setup_ThmDelange} the definition of the truncated Laplace transform  $v(t)$  in (\ref{v_dfn}), and the definition \beq\label{J_dfn} 
J(t):=tv(t)\exp(-\al t)a(t).
\eeq
To prove Theorem \ref{thm_Delange}, it suffices to prove
\beq\label{J_limit}
\lim_{t \maps \infty} J(t)=1.
\eeq
We now prove this rigorously, assuming the following result, which we verify later in this section.
 \begin{propnum}\label{prop_JK_integral}
 Let $K_\lam$ be an allowable approximation to the identity, as defined in (\ref{allowable_dfn}).
Fix $\lam>0$.   

 (I) For every real $\xi$,   
\[\int_0^\infty K_\lam (t-\xi) J(t)dt = \int_0^\infty K_\lam (\xi-t) J(t)dt\]
is a convergent integral.

(II) Moreover, 
     \[ \lim_{\xi \maps \infty} \int_0^\infty K_\lam (t-\xi) J(t)dt =1.\] 
\end{propnum}

 Note that in (I), the two expressions are equivalent since $K_\lam$ is an even function; thus the task is to prove convergence of the integral.
\subsubsection{Proof of Theorem \ref{thm_Delange}: Upper bound for the lim sup}
 Assuming Proposition \ref{prop_JK_integral}, we begin by showing  $\limsup_{t \maps \infty} J(t) \leq 1.$ Since $K_\lam$ is an allowable approximation to the identity, $K_\lam(x) = \lam K(\lam x)$ where $K$ is an even, non-negative, integrable function with $\int K(u)du=1$.
First, $J(t) \geq 0$ for all $t \geq 0$, since $a(t)$ is assumed to be non-negative and $v(t)$ is non-negative by definition. 
Fix a real number $h>0$ and consider any $\xi>h$. By the non-negativity of $J(t)$ and $K_\lam$, 
\[ \int_0^\infty  K_\lam (t-\xi) J(t) dt \geq \int_{\xi - h}^{\xi+h} K_\lam (t-\xi)J(t)dt.\]
Within the interval of integration (which is contained in $(0,\infty)$), since $v(t)$ is decreasing and $a(t)$ is non-decreasing,
\[ J(t) \geq (\xi - h) v(\xi+h) \exp( -(\xi+h)\al) a(\xi-h)
 = J(\xi-h) \exp(-2\al h) \frac{v(\xi+h)}{v(\xi-h)}.\]
Plugging this into the integral on the right, we obtain the inequality 
\begin{align*} \int_0^\infty  K_\lam (t-\xi) J(t) dt 
& \geq  J(\xi-h)
\exp(-2\al h)\frac{v(\xi+h)}{v(\xi-h)} \int_{\xi-h}^{\xi+h} K_\lam(t-\xi)dt \\
& = J(\xi-h)
\exp(-2\al h)\frac{v(\xi+h)}{v(\xi-h)} \int_{-\lam h}^{\lam h} K(u)du.
\end{align*}
 Rearranging shows 
 \[ J(\xi-h) \leq \exp(2\al h) \frac{v(\xi-h)}{v(\xi+h)} (A)^{-1}(B),\]
 say, in which 
\[
     (A) : = \int_{-\lam h}^{\lam h} K(u)du,\qquad
     (B) : = \int_0^\infty  K_\lam (t-\xi) J(t) dt .
\]
By (\ref{v_limit}), 
 \beq\label{v_limit_ratio} 
 \lim_{\xi \maps \infty} \frac{v(\xi-h)}{v(\xi+h)} = \lim_{\xi \maps \infty} \frac{(\xi+h)^m}{(\xi-h)^m} =1.
 \eeq
 Taking $\xi \maps \infty$ (and applying Proposition \ref{prop_JK_integral} to the limit of term (B)), and 
then taking $\lam \maps \infty$ (and applying $\int K=1$ to the limit of term (A)), shows that 
\[ \limsup_{\xi \maps \infty} J(\xi -h) \leq \exp (2\al h) .\]
By taking $h>0$ arbitrarily small, 
we may conclude that $\limsup_{t \maps \infty} J(t) \leq 1$.  For future reference, we note that since $J(t)$ is locally bounded, this implies that $J(t)=O(1)$ for all $t\geq 0$.

\subsubsection{Proof of Theorem \ref{thm_Delange}: Lower bound for the lim inf}
To complete the proof of the theorem, we must verify that $\liminf_{t \maps \infty} J(t) \geq 1$. 
Now the idea is to fix an $h>0$ and $\xi>h$ and study $J(\xi +h)$. By the previous step, we have shown that $J(t)$ is uniformly bounded above by $O(1)$ for all $t \geq 0$; let us accordingly define the finite positive real number
\[ M = \sup \{ J(t) : t \geq 0\}.\]
Then using the non-negativity of $K_\lam$ and $J(t)$,
\beq\label{KMJ}
\int_0^\infty  K_\lam (t - \xi) J(t) dt 
 \leq M \int_{|t-\xi| > h} K_\lam (t-\xi) dt  + \int_{\xi-h}^{\xi+h} K_\lam (t-\xi) J(t) dt =: \mathrm{I} + \mathrm{II},
 \eeq
 say.
 The first term may be bounded above (independent of $\xi$) by 
 \[ \mathrm{I} \leq 2M \int_{\lam h}^\infty K(u) du.\]
  By the non-negativity and integrability of $K(u)$, as $\lam \maps \infty$ this contribution  vanishes (as the tail of a convergent integral), so that $\lim_{\lam \maps \infty} \mathrm{I} =0.$
  
Next, within  the interval of integration in the term $\mathrm{II}$ on the right-hand side of (\ref{KMJ}), since $v(t)$ is a decreasing function and $a(t)$ is non-decreasing, 
 \[ J(t) \leq (\xi+h) v(\xi-h) \exp(-(\xi-h)\al) a(\xi+h) 
     = J(\xi+h) \exp (2\al h) \frac{v(\xi-h)}{v(\xi+h)}.\]
Thus 
\[ \mathrm{II} \leq J(\xi+h) \exp (2\al h) \frac{v(\xi-h)}{v(\xi+h)} \int_{-\lam h}^{\lam h} K(u)  du.\]
Applying these upper bounds in (\ref{KMJ}) and rearranging shows that 
\[ J(\xi+h) \geq \exp (-2\al h) \frac{v(\xi+h)}{v(\xi-h)} (A)^{-1} (B') ,\]
say, 
in which $(A)$ is defined exactly as before, and now
\[
    (B') := \int_0^\infty  K_\lam (t - \xi) J(t) dt - \mathrm{I} \geq   \int_0^\infty  K_\lam (t - \xi) J(t) dt- 2M \int_{\lam h}^\infty K(u) du.
\]
Now taking $\xi \maps \infty$ (and applying Proposition \ref{prop_JK_integral} and the previous observation (\ref {v_limit_ratio})), followed by taking $\lam \maps \infty$ (and applying $\int K=1$ in (A) and the vanishing of the term $\mathrm{I}$ in (B')), shows that 
\[ \liminf_{\xi \maps \infty} J(\xi+h) \geq \exp(-2\al h).\]
By taking $h>0$ arbitrarily small, 
we may conclude that $\liminf_{t \maps \infty} J(t) \geq 1$.
 This completes the proof of Theorem \ref{thm_Delange}, pending the verification of Proposition \ref{prop_JK_integral}.
 
\subsection{Preliminaries to prove Proposition \ref{prop_JK_integral}: the function $H(s)$}\label{sec_K_integral_proof}
Let $K_\lam$ be an allowable approximation to the identity, as defined in (\ref{allowable_dfn}). In the proof of Proposition \ref{prop_JK_integral} (I), it is marginally more convenient  to work with the second expression (which we recall is equivalent to the first expression since $K_\lam$ is an even function).  The method of proof   is the same for any allowable approximation of the identity $K_\lam$, but for notational convenience we suppose $k(x)=\Fscr^{-1}(K)(x)$ is supported in $[-2,2]$ so that 
\[k^\lam = \Fscr^{-1}(K_\lam)\] 
is supported in $[-2\lam,2\lam]$. (That is, if we represent the compact support of $\Fscr^{-1}(K)(x)$ by $[-\del_0,\del_0]$, we assume for notational simplicity that $\del_0=2$. Indeed, the Fej\'{e}r kernel, or any choice from Lemma \ref{lemma_allowable_approx_id} with $\del_0=2$, satisfies this; for any other allowable approximation to the identity with $\Fscr^{-1}(K)(x)$ by $[-\del_0,\del_0]$ one may simply replace every occurrence of $2\lam$ throughout \S \ref{sec_Delange} by $\del_0\lam$.)

Recall   the hypotheses of Theorem \ref{thm_Delange}, for the function $F(s)$ defined for $\Re(s)>\al$ (and $c_0=1$) by
 \[
 F(s) =  f(s) - G(s-\al)= \int_0^\infty  \exp(-st) [a(t)  - \exp(\al t) b(t)] dt.
\]
For $\Re(s)>\al$, by differentiating this expression (which by hypothesis is holomorphic in this region),
 \beq\label{Fprime}
 F'(s) = - \int_0^\infty \exp(-st) t[a(t) - \exp(\al t)b(t)] dt.
 \eeq
 We now define 
 \beq\label{dfn_h}
 H(s)  :=   -\int_0^{1} F'(s+u)u^{m-1} du.
 \eeq
\begin{lemnum}\label{lemma_h_initial_ppties}
The function $H(s)$ is holomorphic for $\Re(s)>\al$, and in this region 
 \beq\label{h_second_integral_dfn}
 H(s)  = \int_0^\infty \exp(-st) t v(t)[a(t)  - \exp(\al t) b(t) ]dt.
 \eeq
 For every real $L>0$ and fixed $\ep>0$, for any continuous function $\phi(y)$ on $[-L,L]$,
\[\int_{-L}^{L} H(\al + \ep + iy) \phi(y) dy\] 
is a convergent integral.
In particular, for every real $\lam>0$ and fixed $\ep>0$, 
\[\int_{-2\lam}^{2\lam} H(\al + \ep + iy) k^\lam(y) \exp (i\xi y) dy=\int_0^\infty  \exp(-(\al+\ep)t)K_\lam(\xi-t) t v(t) [a(t) - \exp(\al t)b(t)]dt,\]
and both integrals are convergent.
\end{lemnum}
\begin{proof}
 To observe that $H(s)$ is holomorphic for $\Re(s)>\al$, 
 recall that by hypothesis $F(s)$ is holomorphic for $\Re(s) > \al$ and continuous on $\Re(s) \geq \al$, aside from a pole at $s=\al$ of order at most $m-1$. Let $\Omega:=\{s: \Re(s)>\al\}$. Then  $F'(s+u)u^{m-1}$ is defined for $(s,u) \in \Omega \times [0,1]$ and is  holomorphic in $s \in \Omega$ for each  $u \in [0,1],$ and is jointly continuous for $(s,u) \in \Omega \times [0,1]$. Thus $H(s)$ is holomorphic on $\Omega$ \cite[Ch. 2, Thm. 5.4]{SSComp}.
 After substituting the  expression (\ref{Fprime}) for $F'$ and rearranging, for $\Re(s)>\al$, 
\[
 H(s)  = \int_0^\infty \exp(-st) t v(t)[a(t)  - \exp(\al t) b(t) ]dt,
\]
with $v(t)$ defined as before in (\ref{v_dfn}).
For clarity, the interchange of the order of integration is justified, since for a fixed $s$ with $\Re(s)> \al$,   the integral expressing $F'(s+u)$ converges, uniformly in $u\in [0,1]$. This has verified (\ref{h_second_integral_dfn}).

For any fixed real $L>0$ and $\ep>0$, the integral expression (\ref{h_second_integral_dfn}) for $H(s)$ is   convergent, uniformly for $s$ in the horizontal half-strip $\Re(s)\geq \al+\ep$, $\Im(s) \in [-L,L].$
Hence for each fixed $\ep>0$, integrating $H(\al+\ep +iy)$ against a continuous function $\phi(y)$ on the compact interval $y \in [-L,L]$ yields a  convergent integral.

In particular, for each fixed $\ep>0$, integrating $H(\al+\ep +iy)$ against the continuous function $k^\lam(y)\exp(i\xi y)$ on the compact interval $y \in [-2\lam,2\lam]$ yields an absolutely convergent double integral
\[
\int_{-2\lam}^{2\lam} H(\al + \ep + iy) k^\lam(y) \exp (i\xi y) dy = \int_{-2\lambda}^{2\lambda} \int_0^{\infty} (\cdots) \, dt\, dy
\]
upon inserting the integral expression (\ref{h_second_integral_dfn}). We may therefore    interchange  the order of integration, so the integral on the left is identical to the (convergent) integral
\[\int_0^\infty  \exp(-(\al+\ep)t)K_\lam(\xi-t) t v(t) [a(t) - \exp(\al t)b(t)]dt.\]
\end{proof}

 To prepare to take the limit as $\ep \maps 0^+$ in the last conclusion of the above lemma, we isolate the key convergence properties of $H$.
\begin{lemnum}\label{lemma_h_convergence}
 \begin{enumerate}[(i)]
     \item\label{h_interval} 
   For any closed finite-length interval $I$ on the real line with $0 \not\in I$, the limit of $H(\al+\ep +iy)$ as $\ep \maps 0^+$ converges uniformly for $y \in I$ to a continuous function on $I$.
  
 \item\label{h_H}      For any $\lam>0$, there exists an integrable (non-negative) function $\mathcal{H}(y)$ on $[-2\lam,2\lam]$ such that uniformly for all $0<\ep<1$, 
      \[|H(\al+\ep +iy)| \leq \mathcal{H}(y).\]

\item\label{g_exists} For real $y \neq 0$, the limit 
 \[  \lim_{\ep \maps 0^+} H(\al + \ep +iy) =: H_0(y)\]
 exists. The function $H_0(y)$ is integrable over $[-2\lam,2\lam]$ for any real $\lam>0$, and $H_0(y)$ is a continuous function  for $y \neq 0$. 
 
\item\label{h_int} For each fixed $\lam>0$, and for any  continuous function $\phi(y)$ on $[-2\lam,2\lam]$,
  \[ \lim_{\ep \maps 0^+} \int_{-2\lam}^{2\lam} H(\al + \ep + iy) \phi(y) dy  =\int_{-2\lam}^{2\lam}\lim_{\ep \maps 0^+}  H(\al + \ep + iy) \phi(y) dy = \int_{-2\lam}^{2\lam} H_0(y) \phi(y)dy.\]
  \end{enumerate}
 \end{lemnum}
  \begin{remark}\label{remark_Narkiewicz}
 In Narkiewicz \cite[Ch. III Lemma 3.3]{Nar83},  the text appears to check that $H(\al+\ep+iy)$ (called $h(\al+\ep+iy)$ in his notation) is integrable in $\ep$ with unspecified behavior in $y$. However, it appears that what is required is that $H(\al+\ep+iy)$ is dominated, uniformly in $\ep$, by a function that is integrable in $y$, to enable an application of the Dominated Convergence Theorem in \cite[Ch. III Cor. 1]{Nar83}. (This defect does not seem to be apparent in Delange's original method, which is quite involved (see \cite[\S2.2, Lemme 2]{Del54}).) In this lemma, we present a different proof that verifies integrability in $y$. 
 \end{remark}

Because $F(s)$ has a pole of at most order $m-1$ at $s=\al$, the discussion  naturally breaks into two cases: $m=1$ and $m \geq 2$.
It is useful to note that the hypothesis on $F(s)$ in Theorem \ref{thm_Delange} yields a corresponding quantitative upper bound we will apply now: for any $R >0$, there exists a constant $C_R>0$ such that 
\beq\label{F_pole_quantitative}
|F(\al +s)| \leq C_R|s|^{-(m-1)} \qquad \text{for all $|s| \leq R$ with $\Re(s)>0$.}
\eeq

 \subsubsection{Proof of Lemma \ref{lemma_h_convergence} \ref{h_interval}}

 If $m=1$, then  by  the definition (\ref{dfn_h}), for $\Re(s)>\al,$
 \beq\label{h_F_difference}
 H(s) = - \int_{0}^{1} F'(s+u)du =  F(s)-F(s+1).
 \eeq
Consequently for any $\ep>0$ and any $y \in \R$,
\[
   H(\al+\ep+iy) =  F(\al+\ep+iy)-F(\al +(\ep +1)+iy).
   \]
 In the case $m=1$, $F(s)$ is holomorphic in $\{s: \Re(s)>\al\}$ and continuous in $\{s : \Re(s) \geq \al\}$ (since $F(s)$ has no pole at $s=\al$). For any   closed finite-length interval $I$, the function $(\epsilon,y) \mapsto H(\al+\ep +iy)$ is consequently continuous as a function of $(\epsilon,y) \in [0,1] \times I$. (This is true even if $I$ contains the origin, again since $F(s)$ has no pole at $s=\al$.) Since $[0,1] \times I$ is compact, the convergence of the limit
 \[
 H_0(y) := H(\alpha+iy) = \lim_{\epsilon \to 0^+} h(\alpha+\epsilon+iy) \quad \text{ for } y \in I
 \]
 is uniform, and the limit function is continuous as a function of $y \in I$.

Next suppose that $m \geq 2$. We suppose $I$ is a closed finite-length interval not containing the origin, and again consider
\[
 H(s) =- \int_0^{1} F'(s+u) u^{m-1} du .
\]
Integration by parts shows that for a fixed $s$ with $\Re(s)>\al$,
  \[ H(s) =  -F(s+1) +(m-1)\int_0^{1} F(s+u)u^{m-2}du.\]
 Thus in particular for $y \in I$,
  \beq\label{h_two_terms}
  H(\al +\ep +iy) =  -F(\al+(\ep+1)+iy) +(m-1)\int_0^{1} F(\al+\ep+iy+u)u^{m-2}du.
  \eeq
  For the first term, since $F(s)$ is holomorphic in $\Re(s) \geq \alpha + 1$, the function $(\epsilon,y) \mapsto F(\alpha+ (\epsilon + 1) + iy) $ is continuous as a function of $(\epsilon,y) \in [0,1] \times I$. Since $[0,1]\times I$ is compact, the convergence of the limit $ F(\alpha+1+iy) = \lim_{\ep \maps 0^+}F(\al + (\ep + 1)+iy)$  is uniform, and the limit function  is continuous as a function of $y \in I$.

  For the second term in (\ref{h_two_terms}), we will apply the Dominated Convergence Theorem. We need to show that  an integrable function in $u$ dominates the integrand, uniformly for all $0< \ep \leq 1$. Under the hypotheses of Theorem \ref{thm_Delange} we may apply (\ref{F_pole_quantitative}) for $R\ll \max\{1,\max_{y\in I}|y|\}$,  for all  $0<\ep \leq 1$, $y \in I$ and $u \in [0,1]$,
\[
  |F(\al + \ep +iy +u) u^{m-2}| \ll_I \frac{u^{m-2}}{|\ep+iy+u|^{m-1}} 
  \leq \frac{u^{m-2}}{u^{m-2}|y|} \leq \frac{1}{|y| }  ,
\]
since $|\ep + iy+u| \geq u$ and $|\ep + iy+u| \geq |y|$. 
The upper bound  $|y|^{-1}$, which holds uniformly for all $0 \leq \ep \leq 1$, is an integrable function of $u$ (indeed, a constant function with respect to $u$) on the interval $[0,1]$.  
  In conclusion, we may take the limit as $\ep \maps 0^+$ inside the integral in (\ref{h_two_terms}), obtaining
  \[ \lim_{\ep\maps 0^+} \int_0^{1} F(\al+\ep+iy+u)u^{m-2}du
   = \int_0^{1} F(\al+iy+u)u^{m-2}du. \]
  The right-hand side integral is of the form  $\tilde{G}(y):=\int_0^{1}G(u,y)du$ for 
   $G(u,y):=F(\al+iy+u)u^{m-2}$. By hypothesis, the function $F(s+u)$    is holomorphic in $s \in \Omega:=\{\Re(s) > \al, \Im(s) \in I\}$ for each fixed $u \in [0,1]$, and continuous on the closure $\overline{\Omega} \times [0,1]$. Thus $G(u,y)$ is uniformly continuous for $(u,y)$ in the compact set $[0,1]\times I$. Thus as the integral of a jointly continuous function over a compact interval, the function $\tilde{G}(y)$ is a continuous function for $y \in I$.
 This completes the proof that as $\ep \maps 0^+$, the second term in (\ref{h_two_terms}) has a limit that is a continuous function of $y \in I$.
 This completes the proof of   Lemma \ref{lemma_h_convergence}\ref{h_interval}, for $m \geq 2$. 

 \subsubsection{Proof of Lemma \ref{lemma_h_convergence} \ref{h_H}}

For the second claim of the lemma, we must confirm that there is a function $\mathcal{H}(y)$, integrable on $[-2\lam,2\lam]$ such that  for each $0<\ep<1$, $|H(\al +\ep +iy)|\ll_{\lambda} \mathcal{H}(y)$ for all $y \in [-2\lam,2\lam]\setminus \{0\}$.  The proof again naturally divides into two cases. In the case $m=1$,  recall from the proof of Lemma \ref{lemma_h_convergence} \ref{h_interval} that the map $(\epsilon,y) \mapsto H(\al+\ep +iy)$ is continuous as a function of $(\epsilon,y) \in [0,1] \times [-2\lam,2\lam].$ It follows that
\[
     |H(\al + \ep+ iy)|  \ll_\lam 1 \qquad \text{for all $y \in [-2\lam,2\lam]$ and  $0\leq \ep\leq 1$.}
\]
Thus for each $\ep>0$, $|H(\al + \ep+ iy)| \ll_{\lambda} \mathcal{H}(y) :=1,$ which is integrable in $y$ over $[-2\lam,2\lam]$.  This completes the proof   for $m=1$.

We next consider the case $m \geq 2$. In the first term on the right-hand side in (\ref{h_two_terms}), for all $y \in [-2\lam,2\lam]$ the argument $s=\al+(\ep+1)+iy$ of $F$ lies in the region of holomorphicity of $F(s)$. Thus this term  is bounded by $O_{\lambda}(1)$ for all $y \in [-2\lam,2\lam]$, uniformly in $0 \leq \ep \leq 1$. This suffices to check integrability in $y$ (over $[-2\lam, 2\lam]$) of this term.  We next focus on the second term in (\ref{h_two_terms}).
By applying (\ref{F_pole_quantitative}) with $R \ll \max\{1,\lam\}$, we have uniformly for $y \in [-2\lambda,2\lambda]$ and $0 < \epsilon \leq 1$, 
\[
(m-1)\int_0^{1} F(\al+\ep+iy+u)u^{m-2}du \ll_{\lambda}\int_0^{1} \frac{u^{m-2}}{|\epsilon+u+iy|^{m-1}} du \leq \int_0^{1} \frac{u^{m-2}}{|u+iy|^{m-1}} du. 
\]
There is now no dependence on $\epsilon$.
We make two elementary observations: first, $0 \leq u \leq |u+iy|$, and second, $|u+iy|^{-1} \leq \sqrt{2}(u+|y|)^{-1}$. (The second follows from $(A+B)^{2}\leq 2 (A^2+B^2)$ for any real $A,B$.) Applying these in sequence shows the above integral is
\beq\label{step_show_H_integrable}
\leq \int_0^{1} \frac{1}{|u+iy|}du \leq \sqrt{2}\int_0^{1} \frac{1}{u+|y|} du = \sqrt{2}( \log (1+|y|)-\log|y|) =: \mathcal{H}(y), 
\eeq
say, for $y \neq 0$. Also define $\mathcal{H}(0) = 1$, say. Note that $\mathcal{H}(y)$ is a non-negative function.
The final claim is that the even function $\mathcal{H}(y)$ is integrable on $[0,2\lambda]$ and hence on $[-2\lambda,2\lambda]$. Indeed, the first term in the definition of $\mathcal{H}(y)$ is integrable on $[0,2\lam]$. 
The second term is integrable on $[0,2\lam]$ as well; the only subtlety to check is integrability for $y$ near the origin, so it suffices to verify integrability on $[0,1]$, say.  Defining $f_n(y) = |\log y| \mathbf{1}_{[1/n,1]}(y)$ gives a sequence of integrable functions that converge pointwise to $-\log y$ on $(0,1]$ and with $0 \leq f_1 \leq f_2 \leq \cdots$, so that by the monotone convergence theorem,
\[ - \int_0^{1} \log y dy = \lim_{n \maps \infty} \int_0^{1} |\log y |\mathbf{1}_{[1/n,1]}(y)dy
 =- \lim_{n \maps \infty} \int_{1/n}^1\log y dy= 1. \]
In summary, we can conclude that $H(\alpha+\epsilon+iy)$ is dominated, uniformly for all $\epsilon > 0$ and $y \in [-2\lambda,2\lambda] \setminus \{0\}$, by $\mathcal{H}(y)$ which is an integrable function of $y$ on $[-2\lambda,2\lambda]$.

   \subsubsection{Proof of Lemma \ref{lemma_h_convergence} \ref{g_exists}}

 Since \ref{h_interval} holds for any closed interval $I$ not containing the origin,  we may conclude that for real $y \neq 0$ the limit $H_0(y):=\lim_{\ep \maps 0+}H(\al+\ep + iy)$ exists, and is a continuous function (for $y \neq 0$). Fix any $\lam>0$; it follows by (ii) and the Dominated Convergence Theorem that $H_0(y)$ is integrable on $[-2\lambda, 2\lambda] \setminus \{0\}$ and therefore integrable on $[-2\lambda,2\lambda]$.

   \subsubsection{Proof of Lemma \ref{lemma_h_convergence} \ref{h_int}}
To verify \ref{h_int} we apply the Dominated Convergence Theorem. Note that the given continuous function $\phi(y)$ is necessarily uniformly bounded for $y \in [-2\lam,2\lam]$, and so is integrable on this finite-length interval. By \ref{h_H},   $H(\al+\ep +iy)$ is dominated by an integrable function $\mathcal{H}(y)$ on $[-2\lam,2\lam]$, uniformly for all $0<\ep< 1$, so we may apply the Dominated Convergence Theorem. Combined with the limit function $H_0(y)$ constructed in \ref{g_exists}, this proves the identity in \ref{h_int}.
 This completes the proof of Lemma \ref{lemma_h_convergence}.

 \subsection{Proof of Proposition \ref{prop_JK_integral} (I)}
  We begin with a lemma, which immediately implies Proposition \ref{prop_JK_integral} (I).
 \begin{lemnum}\label{lemma_prep_JK_integral}
 Let $K_\lam$ be an allowable approximation to the identity, as defined in (\ref{allowable_dfn}) and $J(t)$ as in (\ref{J_dfn}).
Fix $\lam>0$ and real $\xi$.  For every $\ep>0$, the integral
\[  \int_0^\infty \exp(-\ep t)K_\lam(\xi-t) J(t)dt\] 
is convergent. For $\ep \maps 0^+$, we may interchange the limit and integration, so that
\[ \lim_{\ep \maps 0^+} \int_0^\infty \exp(-\ep t)K_\lam(\xi-t) J(t)dt= \int_0^\infty \lim_{\ep \maps 0^+}  \exp(-\ep t)K_\lam(\xi-t) J(t)dt.\]
\end{lemnum}

For the first claim in the lemma, apply the definition (\ref{J_dfn}) of $J(t)$ to write for fixed $\ep>0$:
\begin{align}
\int_0^\infty  \exp(-\ep t)K_\lam(\xi-t) J(t)dt
& =\int_0^\infty  \exp(-\ep t)K_\lam(\xi-t) t v(t) e(-\al t) a(t)dt \nonumber\\
& =\int_0^\infty  \exp(-(\al +\ep) t)K_\lam(\xi-t) t v(t)   a(t)dt.\label{J_ep_idea}
\end{align}
We recall that the hypotheses of Theorem \ref{thm_Delange} provide properties of the function $F(s),$
expressed for $\Re(s)>\al$ by
 \beq\label{F_integral}
 F(s) =  f(s) - G(s-\al)= \int_0^\infty  \exp(-st) [a(t)  - \exp(\al t) b(t)] dt.
 \eeq
 Thus consider  the expression
\beq\label{ep_two_integrals}
 \int_0^\infty  \exp(-(\al+\ep)t)K_\lam(\xi-t) t v(t) [a(t) - \exp(\al t)b(t)]dt + \int_0^\infty \exp(-\ep t) K_\lam(\xi-t) t v(t)b(t)dt .
\eeq 
To prove the first claim in the lemma, it suffices to show that for fixed $\ep>0$, each of these is a convergent integral, so that  their sum, which  is equal to (\ref{J_ep_idea}), is convergent.

For each $\ep>0$, the first integral in (\ref{ep_two_integrals}) is   convergent 
by Lemma \ref{lemma_h_initial_ppties}, which also has shown this integral is identical to 
\beq\label{reveal_h} 
\int_{-2\lam}^{2\lam} H(\al+\ep+iy)k^\lam(y)\exp(i\xi y)dy.
\eeq
 For each $\ep>0$, the second integral in (\ref{ep_two_integrals}) is   convergent (in fact uniformly for all $\ep \geq 0$), since $K_\lam(\xi-t)$ is an integrable function and by hypothesis   $\lim_{t \maps \infty} t b(t)v(t)=1$ (as in Lemma \ref{lemma_v_limit}).

Now we focus on the second claim in the lemma; it again suffices to consider each term in (\ref{ep_two_integrals}) individually, so for each term we will show that we can interchange the limit $\lim_{\ep \maps 0^+}$ with the integral. 
For the first term (namely (\ref{reveal_h})), this is true by Lemma \ref{lemma_h_convergence} \ref{h_int}, applied with the continuous function $\phi(y) = k^\lam(y) \exp(i \xi y).$ 
Moreover,  by Lemma \ref{lemma_h_convergence} we have shown that 
\[ \lim_{\ep \maps 0+} \int_0^\infty  \exp(-(\al+\ep)t)K_\lam(\xi-t) t v(t) [a(t) - \exp(\al t)b(t)]dt =\int_{-2\lam}^{2\lam} H_0(y) k^\lam (y) \exp(i \xi y)dy, \] 
for a function $H_0(y)$ that is integrable on $[-2\lam,2\lam]$ (and continuous for $y \neq 0$), so that this is convergent for each fixed $\xi$. 

For the second term in (\ref{ep_two_integrals}), recall it is   convergent,  uniformly for all $\ep \geq 0$, since $K_\lam(\xi-t)$ is an integrable function and by hypothesis   $\lim_{t \maps \infty} t b(t)v(t)=1$. 
Thus there is an integrable function of $t$ that dominates the integrand (uniformly for all $\ep \geq 0$). Thus  by the Dominated Convergence Theorem, we may take the limit inside the integral, obtaining
 \beq\label{b_integral_limit}
 \lim_{\ep\maps 0^+} \int_0^\infty \exp(-\ep t) K_\lam(\xi-t) t v(t)b(t)dt=\int_0^\infty  K_\lam(\xi-t) t v(t)b(t)dt ,
 \eeq
 and this integral is convergent for each fixed $\xi$. 
 This completes the proof of Lemma \ref{lemma_prep_JK_integral}. 
 
 In the discussion above, note that we have also proved an explicit expression:
 \begin{lemnum}\label{lemma_prep_JK_integral_explicit}
     For each $\lam>0$ and real $\xi$, 
     \[ \int_0^\infty K_\lam(\xi-t)J(t)dt
     = \int_{-2\lam}^{2\lam} H_0(y) k^\lam (y) \exp(i \xi y) dy + \int_0^\infty  K_\lam(\xi-t) t v(t)b(t)dt .\]
 \end{lemnum}

 \subsection{Proof of Proposition \ref{prop_JK_integral} (II)}

For the second claim of Proposition \ref{prop_JK_integral}, we take the limit as $\xi \maps \infty$. It suffices to take the limit as $\xi \maps \infty$ in  each term individually on the right-hand side of the identity in Lemma \ref{lemma_prep_JK_integral_explicit}, and show both terms have a finite limit, say $L_1$ and $L_2$ respectively, with the property that $L_1 + L_2=1$. In the first integral on the right-hand side, $H_0(y)k^\lam(y)$ is an integrable function,  continuous for $y \neq 0$, with support in a bounded set. Thus in the limit as $\xi \maps \infty$, the first integral vanishes by the Riemann-Lebesgue Lemma \cite[Lemma 7.5]{BatDia04} (that is to say, $L_1=0$). 

For the second integral on the right-hand side, setting $u=t-\xi$ (and recalling $K_\lam$ is an even function) expresses it as 
 \[ \int_{-\infty}^\infty K_\lam (u) \om(u;\xi) du\]
 in which $\om(u;\xi) = (u+\xi)v(u+\xi)b(u+\xi)$ if $u \geq -\xi$ and $\om(u;\xi)=0$ for $u< -\xi$. Since $\lim_{t \maps \infty} tv(t)b(t)=1$ (as in Lemma \ref{lemma_v_limit}), we may conclude that  $\om(u;\xi)$ is bounded, uniformly in $\xi$ and $u$, and moreover for each $u$, $\lim_{\xi \maps \infty} \om(u;\xi)=1$. Thus by the integrability of $K_\lam(u)$, the Dominated Convergence Theorem applies, leading to 
 \[ \lim_{\xi \maps \infty}\int_{-\infty}^\infty K_\lam (u) \om(u;\xi) du = \int_{-\infty}^\infty K_\lam (u)du=\int_{-\infty}^\infty K(v)dv = 1,\]
since $K_\lam$ is an allowable approximation to the identity. That is to say, $L_2=1$. 
 This confirms the second claim in Proposition \ref{prop_JK_integral}, and completes the proof of the proposition. Consequently, Theorem \ref{thm_Delange} is also proved.

  \begin{remark}[Hypothesis \ref{hyp:Weak} and non-negativity]
We remark on the hypothesis in Theorem \ref{thm:Weak} that the coefficients $a_n$ are non-negative. This is used to guarantee that the function $a(t) = \sum_{n \leq \exp(t)} a_n$ is non-decreasing, so that Theorem \ref{thm_Delange} can be applied.
Observe that in the proof of Theorem \ref{thm_Delange}, the hypothesis that $a(t)$ is   non-decreasing is used twice in an essential way in \S \ref{sec_main_asymptotic_weak}, when proving  the main asymptotic for the function $J(t)$: both in the proof that $\limsup_{t\maps \infty} J(t) \leq 1$  and   in the proof that $\liminf_{t\maps \infty} J(t) \geq 1$. 
\end{remark}

\section{Examples with Theorem \ref{thm:Weak}}\label{sec_A_corollaries}

 \subsection{Notes on \cref{cor_HypA_PNT}}
 By a classical result for the logarithmic derivative of the Riemann zeta function $\zeta(s)$,    $A(s)=-\zeta'(s)/\zeta(s)$ can be meromorphically extended to all of $\C$. In general, for any meromorphic function $f(s)$, its logarithmic derivative $f'(s)/f(s)$ has a simple pole at each pole or zero of $f(s)$, with the residue being: the negative of the order of the pole,  or the order of the zero, respectively \cite[\S 3.4]{SSComp}. Thus since $\zeta(s)$, originally defined by a Dirichlet series for $\Re(s)>1$, can be meromorphically extended to $\C$ then so can $A(s)$. Since $\zeta(s)$ is holomorphic for $\Re(s) \geq 1$ aside from a simple pole at $s=1$, and does not vanish in the region $\Re(s)\geq 1$, then $A(s)$ is holomorphic for $\Re(s) \geq 1$ with its only singularity being a simple pole at $s=1$ of residue 1 (see for example \cite[Ch. 7]{SSComp}, or \cite[Thm. 10]{Ing32}). Thus an application of Theorem \ref{thm:Weak} to $A(s)$ shows that  $\sum_{n \leq x} \Lambda(n) \sim x$. This implies $\pi(x) \sim x/\log x$  by a version of partial summation; see for example \cite[Ch. 7]{SSComp} or \cite[Thms. 3 and 12]{Ing32}.

  \subsection{Notes on \cref{cor_HypA_dSG}}\label{sec_A_corollaries_group}
This example is \cite[Thm. 1.1]{SauGru00}. In that work, du Sautoy and Grunewald showed that for the zeta function $\zeta_G(s)$ of the group $G$ defined as in (\ref{zeta_group_dfn}), and abscissa of convergence $\al_G>0$ defined as in (\ref{zeta_group_abscissa_dfn}),  $\zeta_G(s)$ admits a meromorphic continuation to $\Re(s) \geq \al_G$ and in this region the only singularity of $\zeta_G(s)$ is a pole at $s=\al_G$ of a finite order $m$ for some integer $m \geq 1$. They then apply an appropriate Tauberian theorem, which they state as \cite[Thm. 4.19]{SauGru00}, and credit to Ikehara as proved by Delange in \cite[p. 62]{Del55}. Here it suffices to apply Theorem \ref{thm:Weak}.
 
 As remarked by du Sautoy and Grunewald \cite[p. 823]{SauGru00}, in order to prove the meromorphic continuation of $\zeta_G(s)$, they employed Artin $L$-functions. While it is expected that an Artin $L$-function $L(\rho,s)$ has only a simple pole at $s=1$ if the representation $\rho$ is trivial, and is entire if $\rho$ is nontrivial, this is not known unconditionally (in general). Thus the methods of \cite{SauGru00} do not rule out that $\zeta_G(s)$ could have a pole for some $s$ with $\Re(s)< \al_G$ arbitrarily close to $\al_G$. Thus while they state in \cite[Thm. 1.1]{SauGru00} a meromorphic continuation in a half-plane strictly larger than $\Re(s) \geq \al_G$, it is inexplicit, and Theorem \ref{thm:Weak} remains the appropriate type of Tauberian theorem to apply.

 \subsection{Notes on  \cref{cor_HypA_Klu}}
     Kl\"uners proved the result in \cref{cor_HypA_Klu} by proving an identity of the form
\[A(s) = G(s) H(s) \zeta_{k(\zeta_\ell)}(ds)^e ,\]
in which $G(s)$ is a finite Euler product (hence convergent), $H(s)$ is a convergent Euler product for $\Re(s) \geq 1/d$, and $\zeta_{k(\zeta_\ell)}(s)$ is the Dedekind zeta function of the cyclotomic extension. Since $\zeta_{k(\zeta_\ell)}(s)$  has a simple pole at $s=1$, it follows that $\zeta_{k(\zeta_\ell)}(ds)^e $ has a pole of order $e$ at $s=1/d$, and the example follows from Theorem \ref{thm:Weak} with $\al=1/d$ and $m=e$.
In fact, Kl\"uners proves a more general result in \cite[Lemma 2.2]{Klu22} that allows $m$ to be any positive integer, and then the statement of \cref{cor_HypA_Klu} holds with the rational number $e:=m/[k(\zeta_\ell):k]$; this requires the application of a more general Tauberian theorem described in \S \ref{sec_lit_HypA}.

\subsection{When an upper bound suffices}\label{sec_remark_upper_bound}
We noted after Theorem \ref{thm:Weak} that a simple corollary of the theorem is the statement that $\sum_{\lam_n \leq x} a_n = O(x^\al(\log x)^{m-1})$. In fact, if only such an upper bound is required, it can be obtained from a slightly weaker hypothesis than Hypothesis \ref{hyp:Weak}. This is explained in \cite[\S 7.2.2]{BatDia04} for the case of a standard Dirichlet series with a simple pole ($m=1$). Briefly, suppose that $F(x)$ is a real-valued monotone non-decreasing  function  supported in $[1,\infty)$ (continuous from the right, locally of bounded variation) with Mellin transform $\mathcal{M}(F)(s)$. (For example, $F(x)=\sum_{n \leq x} a_n $ with $a_n \geq 0$ and associated standard Dirichlet series $A(s)$.) Suppose that
$\mathcal{M}(F)(s)$ has abscissa of convergence $\al >0$ and there exists a real number $a$ and a function $\phi,$ continuous on the closed half-plane $\{s: \sig \geq \al\}$, such that 
 \[ \mathcal{M}(F)(s) = \int x^{-s} dF(x) = \frac{a}{s-\al} + \phi(s)\]
  in the horizontal half-strip $\{ s: \sig \geq \al, |t|< 2\lam_0\}$, for some fixed $\lam_0>0$. 
 Then $F(x) = O(x^\al).$ The proof is obtained by following the proof of the Ikehara--Wiener theorem (for $m=1$ in this case), until an upper bound is obtained, without claiming a lower bound. (In \cite{BatDia04}, this corresponds to repeating the proof of Theorem 7.3 until relation (7.10); in the proof we gave here for Theorem \ref{thm:Weak} this would be analogous to proving only that $\limsup_{t \maps \infty} J(t) = O(1)$, and not pursuing a $\liminf$ statement.)

  \subsection{Reasons for caution under Hypothesis \ref{hyp:Weak}: further examples}\label{sec_hyp_weak_counterex_further}
Theorem \ref{thm_hyp_weak_counterex} (whose proof we defer to \S \ref{sec_hyp_weak_counterex}) provides  one example, specific to Dirichlet series, that shows the conclusion of Theorem \ref{thm:Weak} cannot be improved under Hypothesis \ref{hyp:Weak}; there are far broader classes of examples in a more general setting. 
For a pole of order $m=1$, Theorem \ref{thm:Weak} is called an Ikehara--Wiener theorem, and a version can be stated in terms of Mellin transforms. We can summarize a basic form of this result as the following statement, in which we consider a simple pole at $s=1$ for simplicity.  Let $S(x)$ be a real-valued non-decreasing function  supported on $(1,\infty)$. Suppose that the Mellin transform defined in (\ref{Mellin_dfn1}),
\[ \mathcal{M}(S)(s):= \int_1^\infty  x^{-s}dS(x)= s\int_1^\infty S(x) x^{-s-1}dx,\]
converges for $\Re(s)>1$, and that there exists a real $a\geq 0$ such that $\mathcal{M}(S)(s) -a( \frac{s}{s-1})$ admits a continuous extension to the half-plane $\Re(s) \geq 1$. Then an Ikehara--Wiener theorem states
\beq\label{S_expansion}
S(x) = ax + o(x) \qquad \text{as $x \maps \infty$.}
\eeq
(See for example \cite[Thm. 1]{DebVin18}.)

We record three observations. First, a converse is true. Indeed, suppose that   a function $S(x)$  is complex-valued, supported on $(1,\infty)$, right-continuous and locally of bounded variation. Then if (\ref{S_expansion}) holds, integration by parts shows that uniformly in   $\Im (s)$,
\beq\label{MSs_relation}
\mathcal{M}(S)(s) =  
s\int_{1}^\infty x^{-s}(a + o(1)) dx  =a \left( \frac{s}{s-1}\right) + |s| o\left(\frac{1}{\Re(s)-1}\right), \qquad \text{as $\Re(s) \maps 1^+.$}
\eeq
From this, a brief argument shows that if  $\mathcal{M}(S)$ can be extended as a meromorphic function to a region containing the closed half-plane $\{ s: \Re(s)\geq 1\}$, then within this region $\mathcal{M}(S)$ has only a single simple pole with residue $a$ at $s=1$ (if $a \neq 0$); or no singularities in the closed half-plane (if $a=0$). This is an ``Abelian theorem,'' and is a generalization of the earlier statement in Lemma \ref{lemma_abelian_asymptotic_implies_order_of_pole} that (\ref{lim_known}) implies (\ref{residue_known}). (For example, the truth of the Prime Number Theorem  implies that $\zeta'(s)/\zeta(s)$ has no singularities on the line $\Re(s)=1$ aside from a simple pole at $s=1$, so that the Riemann zeta function has no zeroes on this line.) See \cite[Lemma 7.1, Cor. 7.2]{BatDia04} for these considerations, or a more general Abelian theorem  in \cite[Ch. II \S 7.1 Thm. 2]{Ten15}.  

Second, (\ref{S_expansion}) need not hold if $\mathcal{M}(S)(s) - a(\frac{s}{s-1})$ is only known to be well-behaved in a horizontal right half-strip (of bounded height), rather than a right half-plane.  For observe that if (\ref{S_expansion}) holds, then (\ref{MSs_relation}) holds, and this implies that for $s=\sig+it$ at any height $t \neq 0$,
\begin{multline*}\limsup_{\sig \maps 1^+}(\sig-1)|\mathcal{M}(S)(\sig+it)| \\
\ll_a  \limsup_{\sig \maps 1^+}(\sig-1)\left[  \frac{(\sig^2+t^2)^{1/2}}{((\sig-1)^2+t^2)^{1/2}} + (\sig^2+t^2)^{1/2}o\left(\frac{1}{\sig-1}\right)\right]=0.
\end{multline*}
But suppose $S$ is a function such that $\mathcal{M}(S)(s)$ has a pole at $s=1+it_0$ for some $t_0 \neq 0$. Then 
 \[ \limsup_{\sig \maps 1^+} (\sig-1)|\mathcal{M}(S)(\sig+it_0)|>0,\]
 contradicting the line above, so the claim (\ref{S_expansion}) must be false in this case, even though (\ref{MSs_relation}) could still hold for $|\Im(s)|<t_0$. (This generalizes the earlier statement that (\ref{residue_known}) need not imply (\ref{lim_known}), so the converse to Lemma \ref{lemma_abelian_asymptotic_implies_order_of_pole} is false.)
See \cite[\S 7.2]{BatDia04} for such considerations. Nevertheless, as observed earlier in \S \ref{sec_remark_upper_bound},   ``good behavior'' inside a half-strip (of bounded height) can still provide the weaker conclusion $S(x) = O(x)$.

Third,  the $o(x)$ remainder term in (\ref{S_expansion}) cannot essentially be improved, in the following sense. Let $\rho$ be any positive function defined on $[1,\infty)$. Fix $a>0$ and $0<\del<1$. If \emph{every} non-decreasing function $S(x)$  on $[1,\infty)$   such that $\mathcal{M}(S)(s) -a(\frac{s}{s-1})$ admits a continuous extension to $\Re(s) >1-\del>0$, satisfies the property 
\[ S(x) = ax + O(x \rho(x)),\]
then $\rho(x) = \Omega (1)$.  That is to say, for any presumed explicit rate of decay for the $o(x)$ term in (\ref{S_expansion}), there is some $S(x)$ in this class for which the remainder term decays slower than that presumed rate. This has been proved by Debruyne and Vindas  via a nonconstructive argument using ideas from abstract functional analysis; see \cite[Thm. 2]{DebVin18}. An alternative method constructs explicit counterexamples: given any positive function $\rho(x)$ defined on $[1,\infty)$ that tends to $0$ as $x \maps \infty$, there is a construction of a non-decreasing function $S(x)$ on $[1,\infty)$ such that $\mathcal{M}(S)(s)$ converges for $\Re(s)>1$, $\mathcal{M}(S)-(s-1)^{-1}$ extends to an entire function, and $S(x) = x + \Omega(x\rho(x)).$ See \cite[Thm. 1]{BDV21}.

\section{Hypothesis \ref{hyp:Strong}: Preliminaries about why and how to smooth}\label{sec_B_prelim}

\subsection{Motivation for smoothing}\label{sec_smooth}
Our next interest lies in obtaining an asymptotic for the partial sum  $\sum_{\lam_n \leq x} a_n$  \emph{with an explicit remainder term},  under \cref{hyp:Strong} for the general Dirichlet series $A(s)$. A standard approach to such a Tauberian theorem is to replace the sharp cut-off weight $\mathbf{1}_{\leq x}(u)$ with a smooth approximation $\varphi(u)$. These ideas can be traced back to  Landau \cite{Landau1912,Landau1917,Landau1918}  when counting integral ideals of a number field and other similar problems. The first step is to establish a general form of Mellin inversion for such smooth approximations. When working under Hypothesis \ref{hyp:Strong},  we define the Mellin transform $\hat{\phi}$  of any appropriate function $\phi$ using the convention (\ref{Mellin_dfn2}). Below is one example of a Mellin inversion statement, which is appropriate for our needs. 
\begin{lemnum}\label{lemma_Mellin_inversion}
Assume Hypothesis \ref{hyp:Strong} for $\alpha,\delta,\kappa,m$. Fix an integer $\ell \geq \lceil \kappa \rceil + 3$. If $\varphi$ is a real-valued $C^{\ell-1}$ function compactly supported within the open interval $(0,\infty)$, then 
\beq\label{smooth_argument_asymptotic_prep}
\sum_{n=1}^{\infty} a_n \varphi(\lambda_n)  = \Res_{s=\alpha} [  A(s) \hat{\varphi}(s) ] + \frac{1}{2\pi i} \int_{(\alpha-\delta)} A(s)  \hat{\varphi}(s) ds.
\eeq

\end{lemnum}
Here we use the standard convention that for a real number $c$, $\int_{(c)}$ denotes the integral up the vertical line $\Re(s)=c$.
The proof of Lemma \ref{lemma_Mellin_inversion} follows a standard sequence of ideas using the inverse Mellin transform, which we momentarily postpone to Section \ref{subsec:Prelim}. In this subsection, our aim is instead to motivate the basic strategy of smoothing and unsmoothing.  

Let $\varphi$ be a smooth weight according to Lemma \ref{lemma_Mellin_inversion}, so $\varphi$ is a real-valued $C^{\ell-1}$-function that is compactly supported on $(0,\infty)$, where $\ell \geq \lceil \kappa \rceil + 3$.  For a parameter $x \geq 3$, if the smooth weight $\varphi(u)$ is a good approximation to the sharp cut-off weight $\mathbf{1}_{\leq x}(u)$, then certainly the smooth weight $\varphi$ must depend on $x$ in some way. For convenience, we will additionally assume $\varphi$ is compactly supported inside the smaller open interval $(0,x)$. We begin with a simple estimate for the Mellin transform of such a weight.

\begin{lemnum}\label{lemma_smooth_Mellin_bound}
Fix an integer $k \geq 0$, and real $x \geq 3$. Let $\phi$ be a real-valued continuous function that is compactly supported on $(0,x)$, and is continuously differentiable $k$ times. Then  the Mellin transform $\hat{\phi}(s)$ is entire, and for each $1 \leq j \leq k$, 
\beq\label{smooth_argument_phi_bound}
\hat{\varphi}(s)  = \int_0^x \varphi^{(j)}(u) \frac{u^{s+j-1}}{s(s+1)\cdots(s+j-1)} du \ll_{j} x^{\Re(s)} \cdot \frac{x^{j}  }{|s|^{j}} \|\varphi^{(j)}\|_{\infty} \quad \text{for $\Re(s) > 0$}. 
\eeq 
\end{lemnum}
In other words, if $\phi$ is $C^k$ for a sufficiently large $k$, the Mellin transform $\hat{\varphi}(s)$ decays rapidly for $s$ along vertical lines. (A similar bound holds for $j=0$; by convention, if $\phi \in C^0$ we mean $\phi$ is continuous.)
\begin{proof}
Since $\varphi(u)$ is continuous and compactly supported in $(0,x)$ (and in particular $\varphi$ vanishes at $0$), the Mellin transform $\hat{\varphi}(s)$ is entire (Lemma \ref{lem:mellin_cpt_cts}). For a fixed $j$ with $1 \leq j \leq k$, integrating by parts $j$ times gives \eqref{smooth_argument_phi_bound}. Nominally, the last step uses that $\Re(s)+j-1 \neq -1$, which is true since $\Re(s) > 0$.
\end{proof}

In   the setting of Lemma \ref{lemma_Mellin_inversion} where $\phi$ is $C^{\ell-1}$, the choice $\ell \geq \lceil \kappa \rceil + 3$ ensures that  $\varphi$ is sufficiently smooth that its Mellin transform $\hat{\varphi}(s)$ will decay \textit{faster} than the Dirichlet series $A(s)$ grows, according to \cref{hyp:Strong}. Indeed, applying (\ref{smooth_argument_phi_bound}) with $j=\ell-1$ and $\ell=\lceil \kappa \rceil + 3$ to bound $\hat{\phi}(s)$, and \cref{hyp:Strong} to bound the Dirichlet series $A(s)$, leads to a bound  for the remainder term in (\ref{smooth_argument_asymptotic_prep}):  
\begin{align*}
 \frac{1}{2\pi i} \int_{(\alpha-\delta)} A(s)  \hat{\varphi}(s) ds 
 & \ll_{\ell} x^{\alpha-\delta} \cdot x^{\ell-1} \|\varphi^{(\ell-1)}\|_{\infty} \int_{\alpha-\delta-i\infty}^{\alpha-\delta+i\infty} \frac{|s|^{\kappa} \log^{m-1}(3+|\Im(s)|)}{|s|^{\kappa+2}} |ds|  \\
 & \ll_{\ell,m}  x^{\alpha-\delta} \cdot x^{\ell-1} \|\varphi^{(\ell-1)}\|_{\infty}. 
\end{align*}
This leads to an asymptotic statement of the following shape:
	\beq\label{smooth_argument_conclusion}
	\sum_{n=1}^{\infty} a_n \varphi(\lambda_n) = \Res_{s=\alpha}\Big[A(s) \hat{\varphi}(s) \Big] + O_{\ell,m}( x^{\alpha-\delta} \cdot x^{\ell-1} \|\varphi^{(\ell-1)}\|_{\infty}   ). 
	\eeq
At first glance, the remainder  term appears quite frightening with the expression $x^{\ell-1} \|\varphi^{(\ell-1)}\|_{\infty},$ but this quantity is not so severe. This is more easily seen by temporarily rescaling the smooth weight $\varphi$ with compact support in the open interval $(0,x)$ to a new smooth weight $\Phi$ with compact support in the open interval $(0,1)$. Define
\[
\Phi(t) := \varphi(xt) \quad \text{ for } t \in \R, \quad \text{ so } \quad \hat{\Phi}(s) = \hat{\varphi}(s) x^{-s} \quad \text{and} \quad \|\Phi^{(\ell-1)}\|_{\infty} = x^{\ell-1} \|\varphi^{(\ell-1)}\|_{\infty} 
\]
by performing the substitution $u=xt$ in the formula \eqref{Mellin_dfn2}. Thus, \eqref{smooth_argument_conclusion} is equivalent to
\beq\label{smooth_argument_conclusion_rescaled}
\sum_{n=1}^{\infty} a_n \Phi\Big(\frac{\lambda_n}{x}\Big) = \Res_{s=\alpha}\Big[A(s) x^s \hat{\Phi}(s) \Big] + O_{\ell,m}( x^{\alpha-\delta} \|\Phi^{(\ell-1)}\|_{\infty}   ),
\eeq
which appears to possess the best power savings that one could expect under \cref{hyp:Strong}.

Notice, however, that there are two defects with \eqref{smooth_argument_conclusion_rescaled} and equivalently with \eqref{smooth_argument_conclusion}. First, the residue depends on $\Phi$ and does not equate to the natural or exact residue $\Res_{s=\alpha}[A(s) x^s/s ]$ that leads to the main term in \cref{thm:Weak}. (We can see this explicitly by modifying the computation in Remark \ref{remark_residue}, replacing $x^s/s$ by $\hat{\phi}(s) = x^s \hat{\Phi}(s)$. For example, if the pole at $s=\al$ is of order $m=1$, the ``smoothed'' residue is $g(\al) x^{\alpha}\hat{\Phi}(\al),$ which is of the right order of magnitude, but differs from the exact residue $g(\al)x^\al/\al$ by a factor of $\hat{\Phi}(\alpha)-1/\alpha$.)
As a second defect, the remainder term depends on the $L^{\infty}$-norm of the $(\ell-1)$-th derivative of $\Phi$. 
 
  To  obtain nevertheless an asymptotic for the sharply cut-off sum $\sum a_n \mathbf{1}_{\leq x}(\lam_n)$, one must make a  choice for $\varphi$ (or equivalently for $\Phi$); unfortunately these defects are not cooperative. For example, if one chooses $\Phi$ to be \textit{independent} of $x$, then this forces the remainder term to be $O_{\ell}(x^{\alpha-\delta})$ but the smoothed residue will poorly approximate the natural residue  since the differing factor $\hat{\Phi}(\alpha)-1/\alpha$ will necessarily be a non-zero constant independent of $x$.  On the other hand, if one chooses $\Phi$ to be \textit{dependent} on $x$ then, as the approximation of the sharp cut-off weight $\mathbf{1}_{\leq x}( \, \cdot \, )$ by the smooth weight $\varphi( \, \cdot \, ) = \Phi( \, \cdot \, / x)$ improves, the residue in \eqref{smooth_argument_conclusion_rescaled} becomes closer to the natural residue  but the remainder term in \eqref{smooth_argument_conclusion_rescaled} inflates. This inflation occurs because the derivatives of $\Phi$ must grow with $x$ to better approximate the sharp cut-off. Our proof of Theorem \ref{thm:Strong} in \S \ref{sec_B_proof} will carefully balance these issues, by constructing a specific type of smooth weight $\varphi$ whose rescaled version $\Phi$ will necessarily depend on the parameter $x$. We also call upon the non-negativity assumption $a_n \geq 0$.
We will reduce to considering a sum of $a_n$ over an interval $I=[y,x]$ for some $y>0$, and then construct \emph{two} weights, a majorant $\phi^+$ of $\mathbf{1}_{[y,x]}$ that is supported on an enlargement $I^+\supset I$, and a minorant $\phi^-$ that is supported on a contraction $I^- \subset I$ (with enlargement/contraction controlled by a parameter $\ep$). Each of $\phi^+$ and $\phi^-$ takes values in $[0,1]$, so by the assumption that $a_n \geq 0$,
\[ \sum_{n=1}^\infty a_n \phi^-(\lam_n) \leq \sum_{y< \lam_n \leq x} a_n \leq \sum_{n=1}^\infty a_n \phi^+(\lam_n).\] 
We will show that the left-most sum and the right-most sum both contribute the same main term, with remainder terms depending on the parameter $\ep$ (which encode the contribution of the short intervals $I^+\setminus I$ and $I\setminus I^-$). The identity of the main terms, followed by an optimal choice of $\ep$, then leads to Theorem \ref{thm:Strong}. For more on non-negativity, see Remark \ref{remark_non-negativity}.

\subsection{Proof of Lemma \ref{lemma_Mellin_inversion}} \label{subsec:Prelim} 
For clarity, we organize the ideas to prove  Lemma \ref{lemma_Mellin_inversion} into four ingredients: Mellin inversion for our smooth weight $\varphi$, Perron's formula for the smoothed sum $\sum_n a_n \varphi(\lambda_n)$, a growth estimate for the Dirichlet series $A(s)$ in a vertical strip, and finally a contour shift. Recall that these deductions will be made assuming \cref{hyp:Strong} with $\alpha,\delta,\kappa,m$. 
\subsubsection{Mellin inversion for well-behaved functions}\label{sec_Mellin_justification}
We begin with an explicit statement of inversion of  the Mellin transform defined in \eqref{Mellin_dfn2}.
\begin{lemnum}
Let $\phi$ be a real-valued $C^2$ function compactly supported in $(0,\infty)$. 
For any fixed $\sigma > 0$,  
\begin{equation} \label{Mellin_inversion_smooth}
\varphi(u) = \frac{1}{2\pi i} \int_{\sigma-i\infty}^{\sigma+i\infty} \hat{\varphi}(s) u^{-s} ds \qquad \text{ for } u \in (0,\infty), 
\end{equation}
\end{lemnum}
 The proof of such inversion formulas can be delicate, depending on the nature of the function $\varphi$; see \cite[Section 5.1]{MonVau07} for various scenarios. Luckily, in the setting of Lemma \ref{lemma_Mellin_inversion}, the function $\varphi$ is compactly supported in $(0,\infty)$ and $C^{\ell-1}$ with $\ell \geq 3$ (and hence $C^2$), so the present lemma suffices.
\begin{proof}

Fix $u \in (0,\infty)$. For $T \geq 3$, it follows from \eqref{smooth_argument_phi_bound} with $j=2$ and Fubini's theorem that 
\begin{align*}
\frac{1}{2\pi i} \int_{\sigma-iT}^{\sigma+iT} \hat{\varphi}(s) u^{-s} ds 
& = \frac{1}{2\pi i}  \int_{\sigma-iT}^{\sigma+iT} \Big( \int_0^{\infty} \varphi''(t) \frac{t^{s+1}}{s(s+1)} dt \Big) u^{-s} ds\\ 
& =  \int_0^{\infty}  t \varphi''(t) \Big( \frac{1}{2\pi i} \int_{\sigma-iT}^{\sigma+iT} \frac{(u/t)^{-s}}{s(s+1)}ds \Big) \, dt. 
\end{align*} 
Note the application of Fubini's theorem is trivial since $\varphi$ is compactly supported in $(0,\infty)$. Next, as the inner integral $ds$ converges absolutely and uniformly as $T \to \infty$, we may take the limit of both sides as $T \to \infty$ and, by Lebesgue's Dominated Convergence Theorem, we may interchange this limit with the integral $dt$ over the compact support of $\varphi$.
For any fixed $\sig > 0$ (see \cite[Ch. 7 Lemma 2.4]{SSComp}), 
\[
\lim_{T \to \infty} \frac{1}{2\pi i} \int_{\sigma-iT}^{\sigma+iT} \frac{y^{-s}}{s(s+1)} ds = \begin{cases} 1-y & \text{if } 0 < y \leq 1, \\  0 & \text{if } 1 \leq y < \infty. \end{cases}
\]
Combining the previous observations with this identity at $y = u/t$, we conclude that 
\[
\frac{1}{2\pi i} \int_{\sigma-i\infty}^{\sigma+i\infty} \hat{\varphi}(s) u^{-s} ds = \int_u^{\infty} t \varphi''(t) \Big(1-\frac{u}{t} \Big) dt = \varphi(u)
\]
after integrating by parts. This completes the verification of \eqref{Mellin_inversion_smooth}. 
\end{proof}

\subsubsection{Perron's formula for the smoothed sum} 

For the second ingredient, we state an
  identity that is a version of Perron's formula and another example of Mellin inversion.
  \begin{lemnum}
  Let $A(s)=\sum_{n} a_n \lam_n^{-s}$ be a general Dirichlet  series with non-negative coefficients $a_n \geq 0$. 
Suppose $A(s)$ converges (equivalently, converges absolutely) for $\Re(s)>\al>0$. 
  Let $\phi$ be a real-valued $C^2$ function compactly supported in $(0,\infty)$. 
  For any fixed $\sigma > \alpha$,  
\beq\label{Perron_demo_general}
\sum_{n=1}^{\infty} a_n \varphi(\lam_n) = \frac{1}{2\pi i }\int_{\sigma-i\infty}^{\sigma+i\infty} A(s) \hat{\varphi}(s) ds. 
\eeq
\end{lemnum}
Note that under Hypothesis \ref{hyp:Strong} for $\al,\del,\kappa,m$ (indeed even under Hypothesis \ref{hyp:Weak}), $A(s)$ has abscissa of convergence (and absolute convergence) $\sig_c =\sig_a= \al$ (see \S \ref{sec_convergence}), so that $A(s)$ satisfies the hypotheses. We will apply this when $\phi$ is $C^{\ell-1}$, with $\ell = \lceil \kappa \rceil +3$. 
\begin{proof}
  Since $A(s)$   converges absolutely for $\Re(s)>\alpha$ (and uniformly in $\Re(s) \geq \sigma,$ for any fixed $\sigma> \alpha$),  for a fixed $\sig>\al$ we may write
\[
A(s) = \sum_{n=1}^{\infty} a_n \lambda_n^{-s} \quad \text{ uniformly for } \Re(s) = \sigma. 
\]
Thus, by applying the inverse Mellin transform \eqref{Mellin_inversion_smooth} to each term $\phi(\lam_n)$ in (\ref{Perron_demo_general}), it suffices to show that 
\[
\frac{1}{2\pi i} \sum_{n=1}^{\infty}   \int_{\sigma-i\infty}^{\sigma+i\infty} a_n \lambda_n^{-s} \hat{\varphi}(s) ds  = \frac{1}{2\pi i }\int_{\sigma-i\infty}^{\sigma+i\infty} \sum_{n=1}^{\infty} a_n \lambda_n^{-s} \hat{\varphi}(s) ds,
\]
or in other words, we may interchange the integral and sum. This follows immediately from Fubini's theorem because the coefficients $a_n$   are non-negative, and  we may apply \eqref{smooth_argument_phi_bound} with $j=2$ to deduce the uniform upper bound 
\[
\int_{\sigma-i\infty}^{\sigma+i\infty} \sum_{n=1}^{\infty} |a_n \lambda_n^{-s} \hat{\varphi}(s)| |ds|  = A(\sigma)  \int_{\sigma-i\infty}^{\sigma+i\infty}   |\hat{\varphi}(s)| |ds| \ll A(\sigma) x^\sig \cdot x^2 \| \varphi''\|_{\infty}, 
\] 
where $x > 0$ is any fixed real number satisfying $\supp\varphi \subseteq (0,x)$.  This completes the proof. 
 \end{proof}

\subsubsection{Growth in a strip}
Hypothesis \ref{hyp:Strong} states a condition on the growth of $A(s)$ on precisely the vertical line $\Re(s) = \al - \del$. This can be combined with the fact that $A(s)$ is absolutely convergent for $\Re(s)>\al$ to deduce a bound for $A(s)$ throughout a vertical strip by applying the Phragm\'{e}n-Lindel\"{o}f convexity principle, which we quote from \cite[Thm. 2]{Rad59} (see also \cite[Thm. 5.53]{IwaKow04}). 
\begin{lemnum}[Phragm\'{e}n-Lindel\"{o}f]\label{lemma_PH}
    Let $f(s)$ be a function holomorphic on the strip $a \le \Re(s) \le b$, for some real numbers $a<b$, and assume that    for some $M_1,M_2\ge0,$ 
\[|f(s)| \leq M_1 \exp(|s|^{M_2}) \qquad \text{for all $s$ with $a\le \Re(s) \le b$. }\]
If  
\[|f(a+it)| \le C_a(1+|t|)^A, \qquad |f(b+it)| \le C_b(1+|t|)^B \qquad \text{for $t \in \mathbb{R}$,}\]
with $A \geq B$, then
    \beq\label{Phragmen_Lindelof}
    |f(\sigma + it)|\le C_a^{\ell(\sigma)}C_b^{1-\ell(\sigma)}(1+|t|)^{A\ell(\sigma)+B(1-\ell(\sigma))}
    \eeq
    for all $s$ in the strip $a \le \Re(s) \le b$, where $\ell(\sig) = \frac{b-\sig}{b-a}$ is the linear function such that $\ell(a)=1$ and $\ell(b)=0$.
    Note that the conclusion in (\ref{Phragmen_Lindelof}) is independent of $M_1$ and $M_2$. 
\end{lemnum}

We shall require the following consequence for $A(s)$. We will apply this both  during the change of contour to prove  Lemma \ref{lemma_Mellin_inversion},  in order to show the remainder  terms (integrals over  horizontal lines at height $\pm i T$) vanish as $T \maps \infty$, and in \S \ref{sec_B_proof}.
\begin{lemnum}\label{lemma_convexity}
     Assume Hypothesis \ref{hyp:Strong} for $\al,\del,\kappa,m$,   so that $A(s)$ converges absolutely for $\Re(s) > \al $. Fix any $\eta>0$. For   $\alpha-\delta \le \Re(s) \le \alpha + \eta$,
\[
 	\Big|\frac{(s-\alpha)^m}{(s+\alpha)^m}A(s)\Big| \ll_{\eta, A(\alpha+\eta)} (1+|\Im(s)|)^{(\kappa+\eta) ( \frac{\alpha  - \Re(s) +\eta }{\delta+\eta} )}.
 	\]
\end{lemnum}
\begin{proof}
To apply Lemma \ref{lemma_PH}, let 
\[
f(s) := \displaystyle\frac{(s-\alpha)^m}{(s+\alpha)^m}A(s).
\]
  In the strip $\alpha - \delta \le \Re(s) \le \alpha + \eta$,  $f(s)$ is holomorphic under the hypothesis that $A(s)$ has only the order $m$  pole at $s=\alpha$ (and  $(s+\al)^{-m}$ does not introduce a pole in this strip). Moreover, under Hypothesis \ref{hyp:Strong}, $|f(s)|\leq M_1'\exp(|s|^{M_2'})$ throughout the strip, for certain $M_1',M_2' \geq 0$ (in which $M_1'$ may depend on $\al,\del,m$). Also, there exists a constant $C_1\ge 1$ such that for all $t \in \R,$ 
\beq\label{f_bound_left}
|f(\alpha-\delta+it)|\le C_1(1+|t|)^{\kappa+\eta}.
\eeq
  On the other hand, under Hypothesis \ref{hyp:Strong}, $A(s)$   converges absolutely for $\Re(s)>\alpha$ (and uniformly in $\Re(s) \geq \al+\eta,$ for any fixed $\eta>0$). In this region of absolute (and uniform) convergence,  
\beq\label{f_bound_right}
|f(\alpha+\eta+it)| \ll \sum_{n=1}^{\infty}\frac{|a_n|}{|\lam_n^{\alpha+\eta+it}|} = \sum_{n=1}^{\infty}\frac{a_n}{\lam_n^{\alpha+\eta}} = A(\alpha+\eta) =: C_2,
\eeq
say, where $C_2$ is a positive constant. The linear function satisfying $\ell(\alpha-\delta)=1$ and $\ell(\alpha+\eta)=0$ is $\ell(\sigma)=(-\sigma + \alpha + \eta)/(\delta +\eta)$. Thus by (\ref{Phragmen_Lindelof}),
\[
|f(s)|  \le C_1^{\frac{-\Re(s) + \alpha + \eta}{\delta +\eta}}C_2^{\frac{\Re(s)-\alpha+\delta}{\delta +\eta}}(1+|\Im(s)|)^{(\kappa+\eta)\left(\frac{-\Re(s) + \alpha + \eta}{\delta + \eta}\right)}  \ll_{\eta, A(\alpha+\eta)}  (1+|\Im(s)|)^{(\kappa+\eta)\left(\frac{\alpha-\Re(s) +\eta}{\delta+\eta}\right)}
\]
for all $s$ with $\al-\del \leq \Re(s) \leq \al+\eta$. The verifies Lemma \ref{lemma_convexity}. 
\end{proof}

\subsubsection{Shifting the contour}\label{sec_shifting} Finally, we may complete the proof of Lemma \ref{lemma_Mellin_inversion}. Let $\phi$ be as in the lemma. From Perron's formula \eqref{Perron_demo_general}, we have shown for any fixed $\sigma > \alpha$ that
\[
\sum_{n=1}^{\infty} a_n \varphi(\lambda_n)  = \lim_{T \to \infty} \frac{1}{2\pi i} \int_{\sigma-iT}^{\sigma+iT} A(s) \hat{\varphi}(s) ds. 
\]

Let $\mathcal{R}_T$ denote a rectangular contour with vertical components     from $\sigma-iT$ to $\sigma+iT$, and  from $\al-\del+iT$ to $\al-\del-iT$, and with horizontal components $\mathcal{H}_T^+$ from $\sigma+iT$ to $\al-\del +iT,$ and $\mathcal{H}_T^-$ from $\al-\del -iT$ to  $\sigma-iT$ (with standard counterclockwise orientation). Since $A(s)$ has a pole of order $m$ at $s=\alpha$ from \cref{hyp:Strong} and the Mellin transform $\hat{\phi}(s)$ is entire, it follows by the residue theorem that 
\[  
\frac{1}{2\pi i } \int_{\sigma-iT}^{\sigma+iT} A(s) \hat{\varphi}(s)  ds = \Res_{s=\al}[A(s) \hat{\phi}(s)] + \frac{1}{2\pi i }\Big( \int_{\alpha-\delta-iT}^{\alpha-\delta+iT} -  \int_{\mathcal{H}_T^+}  - \int_{\mathcal{H}_T^-} \Big) A(s) \hat{\phi}(s) ds .
\]
Thus to verify (\ref{smooth_argument_asymptotic_prep}) and complete the proof, it suffices to show the integrals over the horizontal segments vanish as $T \maps \infty$. We will consider $\mathcal{H}_T^+$, and the argument to handle $\mathcal{H}_T^-$ is analogous.
We may assume $\sigma < \alpha+1$, and apply Lemma \ref{lemma_convexity} to bound $A(s)$ with a small fixed choice of $\eta \in (\sigma-\alpha,1)$ to deduce that
\[\int_{\mathcal{H}_T^+}  = \int_{\sigma+iT}^{\al-\del +iT} A(s) \hat{\phi}(s)  ds
\ll \int_{\al-\del}^{\sigma} T^{(\kappa+\eta)( \frac{\al - u +\eta}{\del + \eta})} | \hat{\phi}(u + iT)|  du
\ll  T^{\kappa+\eta} \int_{\al-\del}^{\sigma} |\hat{\phi}(u + iT) | du.
\]
Let $x > 0$ be a fixed real number such that $\supp \varphi \subseteq (0,x)$. By \eqref{smooth_argument_phi_bound} with $j=\ell-1$, the expression above is at most 
\[
\ll_\ell T^{\kappa+\eta}\|\phi^{(\ell-1)}\|_{\infty} \int_{\alpha-\delta}^{\sigma} x^{u+\ell-1} |u+iT|^{-\ell+1} du \ll_{\ell,\sigma,x} T^{\kappa+1+\eta-\ell}. 
\]
Since $\ell \geq \lceil \kappa \rceil +3$ by assumption, the exponent is negative and hence the estimate tends to zero as $T \to \infty$, as required. This completes the proof of Lemma \ref{lemma_Mellin_inversion}.  

\section{Hypothesis \ref{hyp:Strong}: Proof of \cref{thm:Strong}}\label{sec_B_proof}

\subsection{Preparing the smooth weights} 
 
To prove Theorem \ref{thm:Strong}, we will use the approach described in \S \ref{sec_B_prelim} of smoothing the sum with $\varphi$, an approximation of the indicator function $\mathbf{1}_{[0,x]}$ on the interval $[0,x]$. The crux of the problem is to make a careful choice of weights $\varphi$ in Lemma \ref{lemma_Mellin_inversion} that will allow us to recover the same residue as in Theorem \ref{thm:Weak} and produce a strong power-saving remainder term. We first establish basic properties about indicator functions, with the Mellin transform as defined in (\ref{Mellin_dfn2}).

  \begin{lemnum} \label{lem:indicator}
      Let $0 < a < b < \infty$. The Mellin transform $\chi(s) := \widehat{\mathbf{1}}_{[a,b]}(s)$ of the indicator function $\mathbf{1}_{[a,b]}$ is an entire function satisfying:
      \begin{enumerate}[label=(\alph*)]
          \item $\chi(s) = (b^s-a^s)/s$ for $s \in \C^{\times}$ and $\chi(0) = \log(b/a)$.
          \item $|\chi(s)| \leq \max\{ a^{\Re(s)}, b^{\Re(s)} \}  \cdot \min\{ \log (b/a), 2/|s|\}$ for $s \in \C$.
      \end{enumerate}
  \end{lemnum}
  \begin{remark}
      The singularity of  $(b^s-a^s)/s$ at $s=0$ is removable as shown by (a). Instead of writing $\chi(s)$ in this piecewise manner, we will write $\chi(s) = (b^s-a^s)/s$ for all $s \in \C$, including at its removable singularity $s=0$. We will always adhere to this convention with removable singularities for any   function of a complex variable. 
  \end{remark}
  \begin{proof}
      (a) Note $\chi(s) = \int_a^b u^{s-1} du = (b^s-a^s)/s$ for $s \neq 0$ and $\chi(0) = \int_a^b u^{-1} du = \log(b/a)$.  

      (b) Thus, by the triangle inequality, $|\chi(s)| \leq (a^{\Re(s)} + b^{\Re(s)})/|s| \leq \max\{ a^{\Re(s)}, b^{\Re(s)} \} \cdot (2/|s|)$ for $s \neq 0$. The bound also holds when $s=0$ by interpreting $2/|s| = \infty$. On the other hand, $|\chi(s)| \leq \int_a^b u^{\Re(s)-1} du \leq \max\{ a^{\Re(s)}, b^{\Re(s)} \} \int_a^b u^{-1} du = \max\{ a^{\Re(s)}, b^{\Re(s)} \} \log(b/a)$ for $s \in \C$. Combining both estimates proves (b).
  \end{proof}

 We are ready to introduce our desired weights and derive their key properties.   These constructions approximate the indicator function $\mathbf{1}_{[y,x]}$ and are common in many analytic problems; our inspiration comes from some examples familiar to us, such as Weiss \cite[Lemma 3.2]{Wei83}, Zaman \cite[Lemma 2.6]{Zam16}, and Thorner--Zaman \cite[Lemma 2.2]{TZ19}.

\begin{lemnum} \label{lem:weight} 
Fix an integer $\ell \geq 1$, and $0<\ep<1$.  Fix real numbers $x,y > 0$ with $x>e^{4\ep}y$. There exist compactly supported $(\ell-1)$-times continuously differentiable functions \[\varphi^{+} :  (0,\infty) \mapsto [0,1], \qquad \varphi^{-} :  (0,\infty) \mapsto [0,1]
\]
with the following properties:
	 \begin{enumerate}[(i)]
		\item \label{weight_ppty_maj_min} The functions $\varphi^-$ and $\varphi^+$ are a minorant and a majorant, respectively, of the characteristic function on $[y,x]$; that is, for all $u \geq 0$, 
		\[ \varphi^- (u)\leq \mathbf{1}_{[y,x]}(u) \leq \varphi^+(u) . \]
	Moreover,  we have that   $\phi^+ \con 1$ on $[y,x]$ and $\varphi^+$ is supported in $[e^{-2\ep}y,e^{2\ep}x]$; and also we have that $\phi^-\con 1$ on $[e^{2\ep} y, e^{-2\ep}x]$ and $\varphi^-$ is supported on $[y,x]$.
     
        \item \label{weight_ppty_Mellin} 
        The Mellin transforms $\hat{\varphi}^-(s)$ and $\hat{\varphi}^+(s)$ are entire. 
	
		\item\label{weight_ppty_bound} 
		For all $s$ with $\Re(s) \geq 0,$
		\begin{align*}
|\hat{\varphi}^+(s)| & \leq (xe^{2\epsilon})^{\Re(s)} \cdot \min\Big\{ \frac{2}{|s|}, \log(e^{2\epsilon}x/y) \Big\} \cdot \min \Big\{ 1, \Big(\frac{\ell}{\epsilon |s|} \Big)^{\ell} \Big\}, \\
		|\hat{\varphi}^-(s)| & \leq x^{\Re(s)} \cdot \min\Big\{ \frac{2}{|s|}, \log(e^{-2\epsilon}x/y) \Big\} \cdot \min \Big\{ 1, \Big(\frac{\ell}{\epsilon |s|} \Big)^{\ell} \Big\}.
\end{align*}

		\item \label{weight_ppty_approx_id} Define  $\psi^{\pm} := \varphi^{\pm} - \mathbf{1}_{[y,x]}$. 
        The Mellin transforms of $\psi^{+},\psi^{-}$ are entire functions  such that for $s \in \C$,
		\EQN{
		\hat{\varphi}^{\pm}(s) & =  \frac{x^s-y^s}{s} + \hat{\psi}^\pm(s)  
		}
		and 
    \[
    |\hat{\psi}^{\pm}(s)| \leq 2 \epsilon e^{2\epsilon |\Re(s)|} (x^{\Re(s)} +y^{\Re(s)}).  
    \]
	\end{enumerate}
\end{lemnum} 
\begin{remark} By (i),   $\varphi^{\pm}$ can be extended to be defined on $[0,\infty)$ by setting $\varphi^{\pm}(0) =0$. All of the other properties will remain since $\varphi^{\pm} \equiv 0$ on the interval $[0, e^{-2\epsilon} y]$. 
\end{remark}
\begin{remark}
    Estimate (iv) will be applied when $|s-\al| \ll 1/\log x$, which will imply $|\widehat{\psi}^{\pm}(s)| \ll \epsilon x^{\alpha}$. This bound can be interpreted as the error between the residue at $s=\alpha$ for the chosen weights $\varphi^{\pm}$ compared to the sharp cutoff $\mathbf{1}_{[y,x]}$. This error will be acceptable as we will ultimately choose $\epsilon \ll x^{-\delta/(\kappa+1)}$. 
\end{remark}
\begin{proof}
	We begin by defining three sharp cut-off weights denoted $\omega, \phi_0^+$ and $\phi_0^-$ on $(0,\infty)$. In each case the Mellin transform may be evaluated explicitly via \cref{lem:indicator}, and we list it next to the definition of the weight: 
    \begin{equation} \label{weight_step_functions}
	\begin{aligned}
	\omega(t) & = \frac{\ell}{2\epsilon}\mathbf{1}_{[e^{-\epsilon/\ell},  e^{\epsilon/\ell} ]}(t), \qquad 
	    	\hat{\omega}(s) = \frac{e^{\epsilon s/\ell} - e^{-\epsilon s/\ell}}{2  \epsilon s/\ell},\\
	\varphi_0^{+}(t) & = \mathbf{1}_{[e^{-\epsilon}y, e^{\epsilon}x]}(t), \qquad 
	    	\hat{\varphi}_0^+(s) = \frac{(e^{\epsilon} x)^s- (e^{-\epsilon}y)^s}{s},\\
	\varphi_0^-(t) &  = \mathbf{1}_{[e^{\epsilon}y,e^{-\epsilon}x]}(t), \qquad 
		\hat{\varphi}_0^-(s) = \frac{(e^{-\epsilon} x)^s- (e^{\epsilon} y)^s}{s}.
	\end{aligned}
    \end{equation}
	For each integer $j \geq 1$, define the functions $\varphi_j^{\pm}$ iteratively to be the  multiplicative convolution of  $\varphi_{j-1}^{\pm}$ and  $\omega$; that is, 
\[
	\varphi_j^{\pm}(t) = (\varphi_{j-1}^{\pm} \ast \omega)(t) = \int_0^{\infty} \varphi_{j-1}^{\pm}(u) \omega(t/u) \frac{du}{u} = \int_0^{\infty} \varphi_{j-1}^{\pm}(t/u) \omega(u) \frac{du}{u}.
	\]
    For all $j \geq 0$, we verify that $0 \leq \phi_j^\pm(u) \leq 1$ for all $u$. The case $j=0$ is immediate. For $j \geq 1$, this follows by induction since $\phi_j^{\pm}$ is an integral of  non-negative functions and since
\[
\varphi_j^{\pm}(t) =\int_0^{\infty} \varphi_{j-1}^{\pm}(t/u) \omega(u) \frac{du}{u} \leq \int_0^{\infty} \omega(u) \frac{du}{u} = 1. 
\]
For each $j\geq 1$, we verify that $\varphi_j^{\pm}$ is $C^{j-1}$. By \cref{lem:convolution}(v), $\varphi^{\pm}_1$ is continuous and hence $C^0$, establishing $j=1$. By induction and \cref{lem:convolution}(v) again, the functions $\varphi^{\pm}_j = \varphi^{\pm}_{j-1} \ast \omega$ are $C^{j-1}$ for $j \geq 1$. Additionally, for all $j \geq 0$, we will establish by induction that 
\begin{equation} \label{weight_support}
\begin{aligned}
    \mathrm{supp}(\varphi_j^{+}) &\subseteq [e^{-\epsilon - j\epsilon/\ell} y, e^{\epsilon + j\epsilon/\ell} x], 
    \qquad 
    \varphi_j^+ \equiv 1 \text{ on } [e^{-\epsilon + j \epsilon/\ell} y, e^{\epsilon -j \epsilon/\ell} x],\\
   \mathrm{supp}(\varphi_j^-) & \subseteq  
    [e^{\epsilon - j\epsilon/\ell}y, e^{-\epsilon + j\epsilon/\ell} x], 
    \qquad 
    \varphi_j^- \equiv 1 \text{ on } [e^{\epsilon + j \epsilon/\ell} y, e^{-\epsilon -j \epsilon/\ell} x].
\end{aligned}    
\end{equation}
Indeed, the case $j=0$ is by definition \eqref{weight_step_functions}. The support claims for $j \geq 1$ follow inductively by \cref{lem:convolution}(ii) and \eqref{weight_step_functions}. To establish the claims $\varphi^{\pm}_j \equiv 1$   for $j \geq 1$, observe 
\[
\varphi_j^{\pm}(t) =\int_0^{\infty} \varphi_{j-1}^{\pm}(u) \omega(t/u) \frac{du}{u} = \frac{\ell}{2\epsilon} \int_{t e^{-\ep/\ell}}^{t e^{\ep/\ell}} \varphi_{j-1}^{\pm}(u) \frac{du}{u}
\]
for any $t \in (0,\infty)$. If $t \in [e^{\mp \epsilon + j\epsilon /\ell} y, e^{\pm \epsilon - j \epsilon /\ell}x]$ then $[t e^{-\epsilon/\ell}, t e^{\epsilon/\ell}] \subseteq [e^{\mp \epsilon + (j-1)\epsilon /\ell} y, e^{\pm \epsilon - (j-1) \epsilon /\ell}x]$ so by induction $\varphi_{j-1}^{\pm} \equiv 1$ on the interval $[t e^{-\epsilon/\ell}, t e^{\epsilon/\ell}]$ and the above integral evaluates to $1$. This completes the induction and establishes \eqref{weight_support}. 

This proves that $\varphi^{\pm}_j$ are bounded, compactly supported, and continuous for all $j \geq 1$. Thus, by \cref{lem:convolution}(iv), their Mellin transforms $\widehat{\varphi}^{\pm}_j(s)$ are entire and satisfy
\begin{equation} \label{weight_mellin}
    \widehat{\varphi}_j^{\pm}(s) =  \widehat{\varphi}^{\pm}_0(s) \big[ \widehat{\omega}(s)]^{j}
\end{equation}
by induction on $j \geq 0$, as $\varphi_j^{\pm} = \varphi_{j-1} \ast \omega$. 

Now define  $\varphi^{\pm} := \varphi^{\pm}_{\ell}$ as our candidate weights. Our combined observations above have already shown that $\varphi^{\pm} : (0,\infty) \to [0,1]$ is a compactly supported $C^{\ell-1}$ function. We verify (i)--(iv). 

(i) We have already proved the claims in \eqref{weight_support} with $j=\ell$. The identity $\varphi^- \leq \mathbf{1}_{[y,x]} \leq \varphi^+$ follows directly from these properties and the fact that $0 \leq \varphi^{\pm} \leq 1$.

(ii) We have already established $\widehat{\varphi}^{\pm}(s)$ is entire when verifying  \eqref{weight_mellin} for the case $j=\ell$. 

(iii) By \cref{lem:indicator}(b) and \eqref{weight_step_functions}, we find for all $s \in \C$ that
\begin{equation} \label{weight_step_bounds}
\begin{aligned} 
|\widehat{\varphi}^{\pm}_0(s)| 
& \leq \max\{ (e^{\pm \epsilon} x)^{\Re(s)}, (e^{\mp \epsilon} y)^{\Re(s)} \} \cdot \min\Big\{ \log(e^{\pm 2\epsilon} x/y) , \frac{2}{|s|} \Big\}, \\
|\widehat{\omega}(s)| & \leq \max\{ (e^{\epsilon/\ell})^{\Re(s)}, (e^{-\epsilon/\ell})^{\Re(s)} \} \cdot \min\Big\{ 1, \frac{\ell}{\epsilon|s|} \Big\}.
\end{aligned}
\end{equation}
For $s \in \C$ with $\Re(s) \geq 0$, these inequalities imply (upon recalling $x> e^{4\ep}y$):
\begin{align*}
|\widehat{\varphi}^{\pm}_0(s)| 
& \leq   (e^{\pm \epsilon} x)^{\Re(s)}  \cdot \min\Big\{ \log(e^{\pm 2\epsilon} x/y) , \frac{2}{|s|} \Big\}, \\
|\widehat{\omega}(s)| & \leq    (e^{\epsilon/\ell})^{\Re(s)} \cdot \min\Big\{ 1, \frac{\ell}{\epsilon|s|} \Big\}. 
\end{align*}
Inserting these bounds in \eqref{weight_mellin} with $j=\ell$ gives the desired estimate. 

(iv) Recall $\psi^{\pm} := \varphi^{\pm} - \mathbf{1}_{[y,x]}$.  The functions $\psi^{\pm}(u)$ are continuous for all $u \in (0,\infty)$ except possibly $u \in \{y,x\}$ since $\varphi^{\pm}$ and $\mathbf{1}_{[y,x]}$ enjoy the same property. Also, since $0 \leq \varphi^{\pm} \leq 1$, it follows that $-1 \leq \psi^{\pm} \leq 1$ and hence $\psi^{\pm}$ are bounded. By (i), $\psi^+$ is supported in $[e^{-2\epsilon} y, y] \cup [x,e^{2\epsilon} x]$ and $\psi^-$ is supported in $[y, e^{2\epsilon} y] \cup [e^{-2\epsilon} x, x]$. Thus, in either case we may write the associated Mellin transform as
\begin{equation} \label{weight_penultimate}
\widehat{\psi}^{\pm}(s) = \int_0^{\infty} \psi^{\pm}(u) u^{s-1} du
\end{equation}
for all $s \in \C$, since the integrand is compactly supported away from the origin, continuous everywhere except 2 points, and bounded. Since $\psi^{\pm} = \varphi^{\pm} - \mathbf{1}_{[y,x]}$ is a linear combination of compactly supported, bounded, and integrable functions, it follows by linearity of the integral that 
\[
\widehat{\psi}^{\pm}(s) = \int_0^{\infty} (\varphi^{\pm}(u) - \mathbf{1}_{[y,x]}(u) )u^{s-1} du = \widehat{\varphi}^{\pm}(s) - \widehat{\mathbf{1}}_{[y,x]}(s)  =  \widehat{\varphi}^{\pm}(s) - \frac{x^s-y^s}{s}
\]
for $s \in \C$. Thus, by (ii) and  \cref{lem:indicator}(a), the function $\widehat{\psi}^{\pm}(s)$ is entire. It remains to estimate $|\widehat{\psi}^{\pm}(s)|$. From our earlier observations leading to \eqref{weight_penultimate}, we deduce for all $s \in \C$ that
\begin{align*}
|\widehat{\psi}^{+}(s) | \leq \int_0^{\infty} |\psi^{+}(u)| u^{\Re(s)-1} du 
& \leq \int_{e^{-2\epsilon} y}^{y} u^{\Re(s)-1} du + \int_{x}^{e^{2\epsilon} x} u^{\Re(s)-1} du  \\
& = \widehat{\mathbf{1}}_{[e^{-2\epsilon}y,y]}(\Re(s)) + \widehat{\mathbf{1}}_{[x, e^{2\epsilon}x]}(\Re(s)) 
\end{align*}
 by the definition of the Mellin transform for indicator functions. Therefore, by \cref{lem:indicator}(b), the above yields
 \begin{align*}
 |\widehat{\psi}^{+}(s) | & \leq 2\epsilon \cdot \max\{ (e^{-2\epsilon}y)^{\Re(s)}, y^{\Re(s)} \} + 2\epsilon \cdot \max\{ x^{\Re(s)}, (e^{2\epsilon}x)^{\Re(s)} \} \\
  & \leq 2 \epsilon e^{2\epsilon |\Re(s)|} (x^{\Re(s)} +y^{\Re(s)}). 
 \end{align*}
The same bound can be similarly deduced for $\widehat{\psi}^-$. This completes the proof. 

\end{proof}

\subsection{Reduction from $[0,x]$ to $[y,x]$}   Before utilizing the smooth weights from \cref{lem:weight}, we replace the  sharp cut-off $\mathbf{1}_{[0,x]}$ in \cref{thm:Strong} with another sharp cut-off $\mathbf{1}_{[y,x]}$ for $y > 0$. The latter is compactly supported away from the origin, which will be preferable in light of \cref{lem:mellin_cpt_cts} and the smooth weights from \cref{lem:weight}. 

Assume $A(s)$ satisfies Hypothesis \ref{hyp:Strong} for $\al,\del,\kappa,m$. We claim it suffices to establish the following estimate: for all $x, y > 0$ with $x > e^4 y$, 
\beq\label{dyadic_version}
\sum_{y < \lambda_n \leq x} a_n =  \Res_{s=\alpha} \Big[ A(s) \frac{x^s-y^s}{s} \Big] + O\Big(   x^{\alpha - \frac{\delta}{\kappa+1}} (\log x)^{m-1} \Big).
\eeq

\begin{proof}[Proof of \cref{thm:Strong} assuming \eqref{dyadic_version}] Since $\lambda_1 > 0$, observe that 
\[
\sum_{y < \lambda_n \leq x} a_n = \sum_{\lambda_n \leq x} a_n \quad \text{ for } 0 < y < \lambda_1. 
\]
Since the remainder term in \eqref{dyadic_version} does not depend on $y$, by linearity of residues it therefore suffices to prove  that 
\[
\Res_{s=\alpha} \Big[ A(s) \frac{y^s}{s} \Big] \longrightarrow  0  \quad \text{ as } y \to 0^+. 
\]
By Hypothesis \ref{hyp:Strong}, $A(s)$ has a pole of order $m \geq 1$ at $s=\al$, so
\[
\Res_{s=\alpha} \Big[ A(s) \frac{y^s}{s} \Big] = y^{\alpha} P_{m-1}(\log y),
\]
where $P_{m-1}(\log y)$ is a polynomial of degree $m-1$ in $\log y$ (see \cref{remark_residue} for details). As $\alpha > 0$ is fixed, it follows by routine  calculus limits that 
\[
y^{\alpha} P_{m-1}(\log y) \longrightarrow 0 \quad \text{ as } y \to 0^+. 
\]
This yields \cref{thm:Strong} assuming \eqref{dyadic_version} holds. 

\end{proof}

\subsection{Application of the smooth weights in the Mellin inversion argument} 
With the weights $\phi^+(u)$ and $\phi^-(u)$ from Lemma \ref{lem:weight} in hand, we are ready to establish \eqref{dyadic_version} and hence conclude the proof of Theorem \ref{thm:Strong}. Assume \cref{hyp:Strong} holds with $\alpha,\delta,\kappa,$ and $m$. Let $x, y > 0$ be arbitrary with $x > e^4 y$. Let $0 < \varepsilon < 1$ be a real number and fix the integer $\ell = \lceil \kappa \rceil +  3$. Define $\varphi^{\pm}$ to be the weights constructed in \cref{lem:weight}; these weights depend on $x,y,\ep,\ell$ and notice $x > e^4 y > e^{4\ep} y$. We will later choose $\ep$ in (\ref{T_ep_choices}) to depend on $x$ in an optimal way.

Both weights $\varphi^{\pm}$ from Lemma \ref{lem:weight} satisfy the conditions of Lemma \ref{lemma_Mellin_inversion}.
Therefore,
\EQNlabel{\label{eqn:ShiftedContour}}{
\sum_{n=1}^{\infty}  a_n \varphi^{\pm}(\lambda_n) = \Res_{s=\alpha} [  A(s) \hat{\varphi}^{\pm}(s) ] + \frac{1}{2\pi i} \int_{(\alpha-\delta)} A(s)  \hat{\varphi}^{\pm}(s) ds. 
}
We first remove the dependence of the residue on the functions $\hat{\varphi}^{\pm}$. Let $\psi^\pm(s)$ be the entire   function   given in \cref{lem:weight}\ref{weight_ppty_approx_id}, so that 
\begin{equation} 
    \label{eqn:ResidueUnsmoothed}
\Res_{s=\alpha}\left[A(s)\hat{\varphi}^{\pm}(s)\right] = \Res_{s=\alpha}\left[A(s)\frac{x^s-y^s}{s}\right] +  \Res_{s=\alpha}\left[A(s)\psi^\pm(s)\right].
\end{equation}
We estimate the second residue on the right-hand side. By Lemma \ref{lemma_convexity} (with $\eta = \delta$) and  \cref{lem:weight}\ref{weight_ppty_approx_id}, we have that
\beq\label{A_bounded_above_logx}
|A(s)|\ll \frac{1}{|s-\al|^m} \quad \text{ and } \quad |\widehat{\psi}^{\pm}(s)| \ll \epsilon x^{\Re(s)} \quad \text{ for } |s-\alpha| \leq \frac{\delta}{2}.
\eeq  
 We emphasize that the implied constants $\delta$ and $A(\alpha+\delta)$ from Lemma \ref{lemma_convexity} are suppressed by our  convention stated after \Cref{thm:Strong}. 
Also, our application of \cref{lem:weight}\ref{weight_ppty_approx_id} used that $\epsilon \in (0,1)$ and $\Re(s) \geq 0$ inside the disk $|s-\alpha| \leq \frac{\delta}{2}$ in order to absorb the factors of $e^{2\epsilon |\Re(s)|}$ and $y^{\Re(s)}$. The circle $|s-\alpha| = r := \min\{ \frac{\delta}{2},  \frac{1}{\log x}\}$ lies inside this region. Thus, by the residue theorem and holomorphy of $\psi^{\pm}(s)$, we may express the second residue on the right-hand side of (\ref{eqn:ResidueUnsmoothed}) as
\begin{equation} \label{psi_contribution_epsilon}
\begin{split}
\Res_{s=\alpha}\left[A(s)\psi^\pm(s)\right] 
& = \frac{1}{2\pi i} \oint_{|s-\alpha|=r} A(s) \psi^\pm(s) ds \\
& \ll  \oint_{|s-\alpha|=r} \frac{\ep x^{\Re(s)}}{|s-\al|^m}\, |ds|   
 \ll \varepsilon x^\alpha (\log x)^{m-1},
\end{split}
\end{equation}
which follows by applying  the estimate \eqref{A_bounded_above_logx} as well as the identity $x^{1/\log x} =e$.  

Next, we estimate the integral in \eqref{eqn:ShiftedContour} on the line $\Re(s) = \alpha -\delta$ by dividing it into two pieces using  a height parameter $T  =  T(\epsilon,\ell) \geq 2$ (to be chosen momentarily), writing
\[ 
\int_{(\al-\del)}  A(s)  \hat{\varphi}^{\pm}(s) ds=
\int_{\substack{\Re(s)=\alpha - \delta \\ |\Im(s)| \leq T}}   +\int_{\substack{\Re(s)=\alpha - \delta \\ |\Im(s)| > T}}   .
\]
For both integrals, we will apply \cref{lem:weight}\ref{weight_ppty_bound} and \cref{hyp:Strong} on the line $\Re(s)=\al-\del$. 
For $\Re(s) = \alpha - \delta > 0$ the bounds in \ref{weight_ppty_bound} simplify to 
\begin{equation} \label{weight_simplified}
|\hat{\varphi}^{\pm}(s)|  \ll \frac{x^{\alpha-\delta}}{|s|}  \min \left\{ 1, \left(\frac{\ell}{\epsilon |s|} \right) \right\}^{\ell}.   
\end{equation}
Note the $e^{2\epsilon}$ factors are absorbed by the implied constant since $0 < \epsilon < 1$.  For the integral with $\Re(s) = \alpha-\delta > 0$ and $|\Im(s)| \leq T$, we apply these bounds (choosing the first option in the minimum) to obtain an estimate of the form 
$|\hat{\varphi}^{\pm}(s)| \ll \frac{x^{\alpha-\delta}}{1+|\Im(s)|}.$
By \cref{hyp:Strong}, this yields for the portion of bounded height:
\beq\label{bounded_height_portion}
\int_{\substack{\Re(s)=\alpha - \delta \\ |\Im(s)| \leq T}}    \ll  x^{\alpha-\delta} \int_{|t| \leq T} (1+|t|)^{\kappa-1} (\log(3+|t|))^{m-1} dt \ll  x^{\alpha-\delta} T^{\kappa} (\log T)^{m-1}. 
\eeq
The line immediately above is the only place in the proof of Theorem \ref{thm:Strong} where we apply the assumption $\kappa>0$; for $\kappa=0$ see Remark \ref{remark_kappa_zero}.

For the integral with $\Re(s) = \alpha-\delta > 0$ and $|\Im(s)| > T$, we apply \eqref{weight_simplified} to obtain an estimate of the form $|\hat{\varphi}^{\pm}(s)| \ll \frac{x^{\alpha-\delta}}{(1+|\Im(s)|)^{\ell+1}} (\ell/\epsilon)^{\ell}$. Thus, by \cref{hyp:Strong}, the portion of unbounded height satisfies
\beq\label{unbounded_height_portion}
\int_{\substack{\Re(s)=\alpha - \delta \\ |\Im(s)| > T}}  
	 \ll x^{\alpha-\delta} (\ell/\epsilon)^{\ell} \int_{|t| > T} (1+|t|)^{\kappa-\ell-1} (\log(3+|t|))^{m-1} dt \ll  x^{\alpha-\delta} (\ell/\epsilon)^{\ell} T^{\kappa-\ell} (\log T)^{m-1}, 
\eeq
since $\ell = \lceil \kappa \rceil + 3 > \kappa$. It follows from these conclusions and (\ref{psi_contribution_epsilon}) that
\begin{multline}\label{total_remainder}
\Res_{s=\alpha}\left[A(s)\psi^{\pm}(s)\right] +
 \frac{1}{2\pi i}\int_{(\al-\del)}  A(s)  \hat{\varphi}^{\pm}(s) ds 
\\
\ll \epsilon x^{\alpha} (\log x)^{m-1}  +  T^{\kappa} (\log T)^{m-1} x^{\alpha-\delta}  +  (\ell/\epsilon)^{\ell} T^{\kappa-\ell} (\log T)^{m-1} x^{\alpha-\delta}  . 
\end{multline}
The selections
\beq\label{T_ep_choices}
  \qquad T = \ell/\epsilon, \qquad  \epsilon = \min\left\{ \tfrac{1}{10},\, x^{-\frac{\delta}{\kappa+1}}  \right\} 
\eeq
minimize the remainder. Combining these observations with  \eqref{eqn:ResidueUnsmoothed}   in \eqref{eqn:ShiftedContour}, we conclude that 
\beq\label{hyp:Strong_nonnegative}
\sum_{n=1}^{\infty}  a_n \varphi^{\pm}(\lambda_n) = \Res_{s=\alpha} \Big[  A(s) \frac{x^s-y^s}{s}\Big]  + O\big( x^{\alpha - \frac{\delta}{\kappa+1}} (\log x)^{m-1} \big).
\eeq
Recall that $0 \leq \varphi^-(u) \leq \mathbf{1}_{[y,x]}(u) \leq \varphi^+(u) \leq 1$ for all $u \geq 0$ by \cref{lem:weight}\ref{weight_ppty_maj_min}. Since the coefficients $a_n$ are non-negative, we may squeeze the original sum by 
\[ \sum_{n=1}^{\infty}  a_n \varphi^{-}(\lambda_n)  \leq \sum_{y< \lam_n \leq x}  a_n    \leq \sum_{n=1}^{\infty}  a_n \varphi^{+}(\lambda_n) 
\]
and then obtain the desired result (\ref{dyadic_version}) from two applications of (\ref{hyp:Strong_nonnegative}).   This completes the proof of Theorem \ref{thm:Strong}.

  \begin{remark}\label{remark_kappa_zero}[The case $\kappa=0$]
 When $\kappa=0$, (\ref{bounded_height_portion}) is instead estimated by $\ll x^{\al-\del}(\log T)^m$ while the estimate (\ref{unbounded_height_portion}) remains valid. In this case we still choose $T$ and $\epsilon$ as in \eqref{T_ep_choices}, and then the lefthand side of  (\ref{total_remainder})   is dominated by 
 \[
 \ll \ep x^{\al} (\log x)^{m-1} + x^{\al-\del}(\log (\ell/\ep))^{m} \ll x^{\alpha-\delta} (\log x)^m.
 \]
 This leads to a version of Theorem \ref{thm:Strong} for $\kappa=0$, with remainder term $O(x^{\al-\del}(\log x)^m)$.
 \end{remark}

 \begin{remark}
    We emphasize that all implied constants, notably in \eqref{A_bounded_above_logx}, depend only on the constants $\alpha,\delta, \kappa, m, A(\al+\del)$ and $C$ appearing in \cref{hyp:Strong}, and do not depend on the Dirichlet series $A(s)$ in any other way. This observation explains why we rely on \cref{lemma_convexity} to estimate the second residue appearing in  \eqref{eqn:ResidueUnsmoothed} and \eqref{psi_contribution_epsilon}. 
\end{remark} 

\begin{remark}[Hypothesis \ref{hyp:Strong}, and non-negativity] \label{remark_non-negativity}
We remark on the hypothesis that the coefficients $a_n$ are non-negative, in the proof of Theorem \ref{thm:Strong}. We   used this fact when we compared the expression  (\ref{hyp:Strong_nonnegative}) for the weight $\phi^+$ versus the weight $\phi^-$, in order to squeeze the original partial sum between these expressions.  This positivity crucially allows us to handle the ``short sum problem'' of estimating sums like $\sum_{x-h < \lambda_n \leq x} a_n$  where $h = x^{\theta}$ for some fixed $0 < \theta < 1$. Indeed, by \cref{lem:weight}(i) and \eqref{hyp:Strong_nonnegative}, non-negativity implies that
\[
 \sum_{y < n < e^{2\epsilon} y} a_n +  \sum_{e^{-2\epsilon} x < n < x} a_n  \leq \sum_{n=1}^{\infty} a_n \varphi^+(\lambda_n) - \sum_{n=1}^{\infty} a_n \varphi^-(\lambda_n) \ll \epsilon x^{\alpha} (\log x)^{m-1} 
\]
for our choice of $\epsilon$ in \eqref{T_ep_choices} or any larger value of $\epsilon$. By taking $\epsilon = h/x$, it follows that 
\[
\sum_{x-h < n \leq x} a_n \ll h x^{\alpha-1} (\log x)^{m-1}
\]
for $x^{1-\frac{\delta}{\kappa+1}} \leq h \leq x/10$. (One can equivalently prove this estimate by applying \cref{thm:Strong} twice with  $\lambda_n \leq x$ and $\lambda_n \leq x-h$, and then taking the difference.) This short sum estimate can therefore be viewed as a key step in the proof of \cref{thm:Strong}. 

\end{remark}

\subsection{Proof of \cref{cor:Pointwise}} 
For any $N \geq 1$, we have 
\[
a_N = \lim_{\epsilon \to 0^+} \Big(\sum_{\lambda_n \leq \lambda_N} a_n - \sum_{\lambda_n \leq \lambda_N - \epsilon} a_n \Big).
\]
By \cref{thm:Strong}, it follows for any $\epsilon > 0$ sufficiently small that
\[
\sum_{\lambda_n \leq \lambda_N} a_n - \sum_{\lambda_n \leq \lambda_N - \epsilon} a_n = \Res_{s=\alpha}\Big[ A(s) \frac{\lambda_N^s}{s} \Big] - \Res_{s=\alpha}\Big[ A(s) \frac{(\lambda_N-\epsilon)^s}{s} \Big]  + O\Big( \lambda_N^{\alpha - \frac{\delta}{\kappa+1} } (\log \lambda_N)^{m-1}\Big). 
\]
Since the error term does not depend on $\epsilon > 0$, it suffices to show the difference of residues tends to zero as $\epsilon \to 0^+$. 
From \cref{remark_residue}, the difference of residues is equal to 
\[
\lambda_N^{\alpha} P_{m-1}(\log \lambda_N) - (\lambda_N-\epsilon)^{\alpha} P_{m-1}(\log(\lambda_N-\epsilon)) 
\]
for some polynomial $P_{m-1}$ of degree $m-1$. Taking $\epsilon \to 0^+$, this difference tends to zero by continuity. Combining our observations establishes the corollary.

\begin{remark}\label{remark_CLT}
We pause to remark on \cite[Appendix A, Thm. A.1]{CLT01}, which claims that under Hypothesis \ref{hyp:Strong} for a given $\al,\del,\kappa,m$, the conclusion of Theorem \ref{thm:Strong} holds but with the better remainder term $O(x^{\al-\del})$, independent of the growth parameter $\kappa$. By Theorems \ref{thm:Example} and \ref{thm:Example_bounded}, this general claim cannot be true. Note that the remainder term stated in \cite[Thm. A.1]{CLT01} does not affect the main theorems in \cite{CLT01}, which are either stated as asymptotics with no specified remainder, or as an asymptotic with a power-saving remainder term, but no precise exponent giving for the savings (see \cite[Cor. 4.4.8, Cor. 5.2.6]{CLT01}).

Let us explain a gap in the proof of Theorem A.1 in that work. In their notation, $\Theta := \lim_{s \maps \al}(s-\al)^mA(s)>0.$ The statement of \cite[Lemma A.2]{CLT01}  shows correctly that under Hypothesis \ref{hyp:Strong} for $\al,\del,\kappa,m$, there exists for each integer $k > \kappa $ a polynomial $Q_k$ of degree $m-1$ with leading coefficient $k! \Theta /(\al^{k+1}(m-1)!)$ such that for every $\mu< \del$, 
\beq\label{CLT_Phi}
\Phi_k(x) :=\sum_{\lam_n \leq x}a_n (\log (x/\lam_n))^k = x^\al Q_k(\log x) + O(x^{\al - \mu}).
\eeq
 The factor $(\log (x/\lam_n))^k$ acts like a smooth weight (see \cite[\S 5.1.2]{MonVau07}), so that this is a reasonable remainder term for this smoothed sum (compare to (\ref{smooth_argument_conclusion_rescaled})).
The goal is then to show that a statement similar to this also holds for $k=0$. Beginning with $k>\kappa$ it suffices to prove that if a statement of the form (\ref{CLT_Phi}) holds for $k$ with a savings $\del_k$ in the remainder term $O(x^{\al - \del_k})$, then it holds for $k-1$ with a savings $\del_{k-1}$, and proceed downward by induction. First, the argument correctly proves  a sandwich result: for each $\eta \in (0,1),$
\[ \frac{ \Phi_k(x(1-\eta)) - \Phi_k(x)}{\log (1-\eta)} \leq k \Phi_{k-1}(x) \leq  \frac{ \Phi_k(x(1+\eta)) - \Phi_k(x)}{\log (1+\eta)}.\]
By the true statement  (\ref{CLT_Phi}) for $k>\kappa$, 
\beq\label{CLT_induction}
|\Phi_k(x) - x^\al Q_k(\log x)| \ll x^{\al - \del_k},
\eeq
for some $\del_k < \del$. From this, one can deduce the following property, for sufficiently small $u \in (-1,1)$ (say of the form $u=\pm x^{-\ep}$ for some small $\ep>0$):
\[ \frac{\Phi_k(x(1+u)) - \Phi_k(x)}{\log (1+u)}  = x^\al [ \al Q_k(\log x) +  Q'_k(\log x)]+ O(x^\al (\log x)^{m-2}u +x^{\al - \del_k}/u).\]
We wish to take $u$ to be   $u=\pm x^{-\ep}$ for some small $\ep>0$ and then apply this in the sandwich result; however we must check that we obtain savings in both parts of the remainder term. Balancing the remainder terms, say $x^{\al -\ep} \approx x^{\al - \del_k +\ep}$, shows we should take $\ep = \del_k/2$. As a result, we can deduce (\ref{CLT_induction}) for $k-1$ and $\del_{k-1}:=\del_k/2$ (up to factors of $\log x$) if it is known for $k$ and $\del_k$. (This is where the proof of \cite[Thm. A.2]{CLT01} must be corrected, since it seems to assume one could take $\del_{k-1} = \del_k$ at each step.) To begin the induction we start with an integer $k>\kappa$ as small as allowed, and then iterate until we reach $k=0$. Set  $K_0 = \kappa+1$ if $\kappa \in \Z$, and $K_0 = \lceil \kappa \rceil$ if $\kappa \not\in \Z$. The corrected argument proves a version of Theorem \ref{thm:Strong} with remainder term of size $O(x^{\al - \frac{\del}{2^{K_0}}+\ep})$, for any $\ep>0$.

\end{remark}

\section{Examples with Theorem \ref{thm:Strong}}\label{sec_B_corollaries}
In this section, we will call upon the standard convexity upper bound for the Riemann zeta function and for Dirichlet $L$-functions, which we record here, as in \cite[Thm. 1.9]{Ivi85} and \cite[Thm. 3]{Rad59} (see also \cite[Thm. 5.23]{IwaKow04}).  
\begin{lemnum}[Convexity bounds]\label{lemma_standard_convexity_zeta}
For all $s$ with $0 \leq \Re(s) \leq 1$,
 \[ \left|\frac{(s-1)}{(s+1)}\zeta(s)\right| \ll (1+|\Im(s)|)^{\frac{1}{2}(1-\Re(s))}\log (3+|\Im(s)|).\]
Let $\eta \in (0,1/2]$ be given. For each integer $q > 1$ and primitive Dirichlet character $\chi$ of conductor $q$, for all $s$ with $-1/2 \leq -\eta \leq \Re(s) \leq 1+\eta \leq 3/2$, 
 \[|L(s,\chi)| \leq \left( \frac{q|1+s|}{2\pi}\right)^{\frac{1}{2}(1-\Re(s)+\eta)}\zeta(1+\eta).\]

\end{lemnum}

\subsection{Notes on \cref{cor:Strong_1}}
 In \cite[Ex. 2.1]{Alb24xb}, Alberts shows that $A(s) = \prod_p (1+2p^{-s}) = \zeta(s)^2 H(s)$
 for 
 \[ H(s) = \prod_p (1-3p^{-2s}+2p^{-3s}).\]
The example \cite[Ex. 4.2]{Alb24xb} shows $H(s)$ is absolutely convergent for $\Re(s)>1/2$ (and hence uniformly bounded for $\Re(s) \geq 1/2 + \ep_0$ for any fixed $\ep_0>0$). 
 By the convexity bound in Lemma \ref{lemma_standard_convexity_zeta}, for any fixed $\del< 1/2$, on the line $\Re(s)=1-\del$, 
 \[ |\zeta^2(s)| \ll ((1+|\Im(s)|)^{\frac{1}{2} - \frac{1}{2}(1-\del) +\ep})^2 \ll (1+|\Im(s)|)^{\del +2\ep}
 \]
 for any $\ep>0$. Additionally, Lemma \ref{lemma_standard_convexity_zeta} shows that (\ref{PH_prep_hyp_B}) holds (with $M_2\leq 1$, say).
 Consequently $\zeta(s)^2$ satisfies Hypothesis \ref{hyp:Strong} with $\al=1$, $m=2$, any $\del< 1/2$,   and  any $\kappa > \del$; it suffices to take $\kappa=1/2$. It follows that $A(s)$ satisfies the hypothesis with the same parameters, and hence by Theorem \ref{thm:Strong}, 
 \[ \sum_{n \leq x} a_n  = \Res_{s=1} \left[ \frac{\zeta(s)^2 H(s) x^s}{s}\right] + O(x^{1 - 2\del/3+\ep})\]
 for any $\ep>0$.
 Recalling any $\del<1/2$ suffices, we can write the remainder term as $O(x^{2/3+\ep})$ for any $\ep>0$. 
To compute the residue, we may write $\zeta(s) = (s-1)^{-1} + h(s)$ for a function $h(s)$ that is holomorphic for $\Re(s) \geq 1$, since $s=1$ is the only pole of $\zeta(s)$ in this region. 
 The residue is 
 \begin{align*}
\Res_{s=1} \left[ \frac{\zeta(s)^2 H(s) x^s}{s}\right]
& = \lim_{s \maps 1}\left(\frac{d}{ds}\left[ (s-1)^2 \left[ ((s-1)^{-1}+ h(s))^2 \frac{H(s)}{s} x^s\right] \right]\right)\\
& =\lim_{s \maps 1}\left(\frac{d}{ds}\left[  \frac{H(s)}{s} x^s \right]\right) +2h(1)H(1)x \\
& = H(1)x \log x + [ H'(1)-H(1)+2h(1)H(1)]x.
 \end{align*}
This leads to the example, since $H(1)=\prod_p (1-3p^{-2}+2p^{-3}).$

\subsection{Notes on \cref{cor:Strong_2}}
 In \cite[Prop. 2.1]{PTBW20}, it is shown that 
 \[ A(s) = H(s)\prod_{\chi} L((d-1)s,\chi) ,\]
 in which $\chi$ varies over all Dirichlet characters modulo $d$ and $H(s)$ is holomorphic for $\Re(s) > \frac{1}{2(d-1)} = \frac{1}{d-1} - \frac{1}{2(d-1)}.$ For the trivial character $\chi_0$, $L(s,\chi_0)$ has a meromorphic extension to $\C$ with only a simple pole at $s=1$; for all $\chi \neq \chi_0$,  $L(s,\chi)$ has an analytic continuation to all of $\C$. Let $\phi(d)$ denote the Euler phi function. There are $\phi(d)$  many $L$-functions in the product over $\chi$, and each of these satisfies a convexity bound for $0 \leq \Re(s) \leq 1$ as in Lemma \ref{lemma_standard_convexity_zeta}, 
 so that for $s$ with  $0\leq \Re(s)\leq 1$,
 \[ \left| \frac{(s-1)}{(s+1)}L(s,\chi_0)\right| \ll_d (1+|\Im(s)|)^{\frac{1}{2}(1-\Re(s)) + \ep }\]
 for the trivial character,
 and 
 \[ |L(s,\chi)| \ll_d (1+|\Im(s)|)^{\frac{1}{2}(1-\Re(s)) + \ep }\]
 for any nontrivial character $\chi \neq \chi_0$,
 for any $\ep>0$. (Here we used the facts that for the trivial Dirichlet character $\chi_0$ modulo $d$, $L(s,\chi_0)=\zeta(s)\prod_{p |d}(1-p^{-s})$ as in \cite[Ch. 4 Eqn (6)]{Dav00}; and for $\chi$ a nontrivial imprimitive character, then $L(s,\chi) = L(s,\chi')\prod_{p|d}(1-\chi'(p)p^{-s})$ for a primitive character $\chi'$ with conductor $d'$ for some divisor $d'|d$, as in \cite[Ch. 5 Eqn (3)]{Dav00}.)
 Consequently, for $\Re(s) = \frac{1}{d-1} - \del$ for a given $\del< \frac{1}{2(d-1)}$,
  \[ \left|  \prod_\chi L((d-1)s,\chi)\right| \ll_d (1+|\Im(s)|)^{\phi(d)\frac{1}{2} (d-1)\del + \ep } .\]
Thus $A(s)$ satisfies Hypothesis \ref{hyp:Strong} with $\al=\frac{1}{d-1}$, $\del=\frac{1}{2(d-1)} -\ep'$, $m=1$, and  $\kappa = \varphi(d) \frac{1}{2} (d-1) \delta + \epsilon$ for any $\ep>0$ and $\ep' > 0$.   Simplifying this using the definition of $\del$, we can take $\kappa = \phi(d)/4 + \ep>0$ for any $\ep>0$.
Additionally, Lemma \ref{lemma_standard_convexity_zeta} confirms that (\ref{PH_prep_hyp_B}) holds (with $M_2 \leq \phi(d)/2,$ say).
Thus 
\[ \sum_{n \leq x} a_n = \Res_{s=1/(d-1)} \left[ \frac{A(s) x^s}{s}\right] + O(x^{\frac{1}{d-1} - \del'+\ep''})\]
for any $\ep''>0$, where $\del' = \frac{2}{(d-1)(\phi(d)+4)}.$ The residue is
\begin{align*}
  \Res_{s=1/(d-1)} \left[ \frac{A(s) x^s}{s}\right] & =  
\lim_{s \maps 1/(d-1)} \left[ \left(s-\frac{1}{d-1}\right)H(s) L((d-1)s,\chi_0) \prod_{\chi\neq \chi_0} L((d-1)s,\chi)\cdot \frac{x^s}{s}\right] \\
&= \lim_{s \maps 1} \left[ \left(\frac{s-1}{d-1}\right)H(\frac{s}{d-1}) L(s,\chi_0) \prod_{\chi\neq \chi_0} L(s,\chi)\cdot \frac{x^{s/(d-1)}}{s/(d-1)}\right]\\
& = H(\frac{1}{d-1})  \prod_{\chi\neq \chi_0} L(1,\chi) \cdot \Res_{s=1}[L(s,\chi_0)]\cdot  x^{1/(d-1)} .
 \end{align*}
Recalling that for the trivial Dirichlet character $\chi_0$ modulo $d$, $L(s,\chi_0)=\zeta(s)\prod_{p |d}(1-p^{-s})$,   the above expression defines 
 the constant \[c_d'= H(\frac{1}{d-1})  \prod_{\chi\neq \chi_0} L(1,\chi) \prod_{p|d}(1-p^{-1})\]
appearing in the example.

\section{Limiting counterexample: Proof of \cref{thm:Example}}\label{sec_example_unbounded}

 We now prove Theorem \ref{thm:Example}.
Fix $\alpha > 0, \delta \in (0,\alpha), m \in \Z^+$ and $\kappa > \frac{1}{2}$. All implied constants may depend on these parameters. For a real parameter $r > 1$, define the general Dirichlet series
\begin{equation} \label{eqn:DirichletSeries-A}
A(s)  =  \sum_{n=1}^{\infty} \frac{a_n}{\lambda_n^s}  := (-1)^{m-1}\zeta^{(m-1)}\Big(1 + \frac{r}{\delta}(s-\alpha)  \Big) = \sum_{n=1}^{\infty}  \frac{ (\log n)^{m-1} }{n^{1+ r (s-\alpha)/\delta}}.
\end{equation}
The parameter $r$ will be specified to control the growth of $A(s)$ along $\Re(s) = \alpha-\delta$. In particular, by assuming $r>1$, we assure that when $\Re(s)=\al-\del$ then the argument of $\zeta^{(m-1)}(z)$ for $z =1+(r/\del)(s-\al)$ lies in $\Re(z)<0$. For this purpose, we record a lemma about derivatives of the zeta function.
 
\begin{lemnum} \label{lem:Growth}
Let $k \geq 0$ be an integer, and let $z\in \mathbb{C}$  be such that $b\leq \Re(z) \leq a <0$ for $a, b$ fixed. Then, as $|\Im(z)|\to \infty$, 
	\[
	\left|\zeta^{(k)}(z)\right|  \ll_{k,a,b} \left(1+ |\Im(z)|\right)^{\frac{1}{2} - \Re(z)} \log^k(3+|\Im(z)|). 
	\]
\end{lemnum}
 \begin{proof} We may assume without loss of generality that $|\Im(z)| \geq 3$. 
We first consider the case $k=0$. Recall that the functional equation of $\zeta(s)$ is
\begin{equation}\label{eqn:funeq}
\zeta(s) = 2^s\pi^{s-1}\sin\left(\frac{\pi s}{2}\right)\Gamma(1-s)\zeta(1-s).
\end{equation}
Note that $|\sin(\pi s/2)|\ll e^{\pi|\Im(s)|/2}$ and that, for any $\varepsilon >0$, $|\zeta(1+\varepsilon +it)|\ll_{\varepsilon} 1$. Now consider $z\in \mathbb{C}$   such that $b\leq \Re(z) \leq a <0$ for the given $a, b$. Then by the functional equation,
\beq\label{zeta_needs_gamma}
|\zeta(z)|\ll_{a}e^{\pi |\Im(z)|/2}\,|\Gamma(1-z)|.
\eeq
We must verify for $|\Im(z)| \geq 3$ that 
\beq\label{Gamma_bound_in_strip}
|\Ga(1-z)| \ll_{a,b}|\Im(z)|^{1/2-\Re(z)}e^{-\pi |\Im(z)|/2}.
\eeq
In general, for $\del>0$ and $-\pi + \del \leq \arg s \leq \pi - \del$, Stirling's formula implies
 \[\log \Ga(s) = s\log s - s - \frac{1}{2} \log s + \log \sqrt{2\pi} + O_\del(|s|^{-1}),\]
 (see \cite[Appendix \S 3]{KarVor92}). Consequently,  in any vertical strip $0<\al \leq \Re(s) \leq \be$ and for $|\Im(s)| >1$, 
 \[  
 |\Ga(s)| \ll_{\al,\be} e^{(s-1/2)(\log|s|+i\arg{s})}\ll_{\al,\be} |s|^{\Re(s)-1/2}e^{-|\Im(s)|\arg{s}}\ll_{\al,\be}|\Im(s)|^{\Re(s)-1/2}e^{-\pi |\Im(s)|/2}.
 \]
Thus, for $b\leq \Re(z) \le a< 0$ and $|\Im(z)| \geq 3$, (\ref{Gamma_bound_in_strip}) holds. Hence we may conclude in (\ref{zeta_needs_gamma}) that
\begin{equation}\label{zetabound}
|\zeta(z)| \ll_{a,b} |\Im(z)|^{1/2-\Re(z)}.
\end{equation}
Note that this holds for any given $b<a<0$.
 
We turn to the proof of the lemma for $k \geq 1$. Given $b<a<0$, we may now bound  derivatives  $\zeta^{(k)}(z)$ for $b\leq \Re(z)\leq a<0$ by using the Cauchy integral formula and the bound (\ref{zetabound}) we have already obtained in the (wider) strip $2b \leq \Re(z) \leq  a/2<0$. Precisely,
\[
\zeta^{(k)}(z) = \frac{k!}{2\pi i}\int_{\mathcal{C}}\frac{\zeta(w)}{(w-z)^{k+1}}\,dw,
\]
where $\mathcal{C}$ is the compact circle $|s-z|=\rho$; we take 
\[
\rho = \min\Big\{\,|\Re(z)-a/2|\,,\, |\Re(z)-2b|\,, \frac{1}{\log(3 + |\Im(z)|)} \Big\}
\]
to ensure that any $w\in\mathcal{C}$ satisfies $2b\le \Re(w) \le a/2$, $|\Re(w) - \Re(z)| \leq \rho$, and $|\Im(w)| \asymp |\Im(z)|$ as $|\Im(z)| \geq 3$ and $\rho \leq 1$. Thus, if $w=w_0\in \mathcal{C}$ maximizes $|\zeta(w)|$ along $\mathcal{C}$, then by  \eqref{zetabound} (with $b$ replaced by $2b$ and $a$ replaced by $a/2$) it follows that
\[
|\zeta^{(k)}(z)| \le \frac{k!}{\rho^{k}}|\zeta(w_0)| \ll_{k,a,b} \frac{|\Im(w_0)|^{1/2-\Re(w_0)}}{\rho^k} \ll_{k,a,b} \frac{(1+|\Im(z)|)^{1/2 - \Re(z) + \rho}}{\rho^k}.
\]
The numerator is $\ll  (1+|\Im(z)|)^{1/2 - \Re(z) }$ since  $\rho \leq 1/\log(3+|\Im(z)|)$; additionally, for each fixed $b \leq \Re(z) \leq a$, for all sufficiently large $|\Im(z)| \gg_{a,b} 1$ we have $\rho \geq 1/ \log (3 + |\Im(z)|)$, and the lemma follows. 
 
 \end{proof}

\subsection{Proof of \cref{thm:Example}(i): verification of \cref{hyp:Strong}} \label{subsec:Example-Verfication} Since the Riemann zeta function has a meromorphic continuation to all of $\mathbb{C}$ with only a simple pole at $s=1$, it follows that $A(s)$ has a meromorphic continuation to all of $\mathbb{C}$ except for a pole at $s=\alpha$ of order $m$. 
As a consequence of Lemma \ref{lem:Growth}, for any $0 < \delta < \alpha$, and $r > 1$, 
\beq\label{generic_r}
|  A(s) |\ll_{\alpha, m, \delta, r} (1+|\Im(s)|)^{r-\frac{1}{2}} \log^{m-1}(3+|\Im(s)|),
\eeq
when $\Re(s) = \alpha-\delta$. As $\kappa > 1/2$, we may set 
\begin{equation} \label{eqn:DirichletSeries-r-Choice}
r = \kappa + \tfrac{1}{2} > 1
\end{equation}
to verify \cref{hyp:Strong} holds for $A(s)$ with $\alpha, \delta, m,$ and $\kappa$.

\subsection{Proof of \cref{thm:Example}(ii): large coefficients} 
From \eqref{eqn:DirichletSeries-A}, the coefficients $a_n$ are non-negative, and direct computation shows
\[    \lambda_n = n^{r/\delta}, \qquad a_n 
    = \lambda_n^{\alpha - \delta/r } \Big( \dfrac{\delta}{r} \log \lambda_n \Big)^{m-1}. \]
In particular $\lam_1=1$.
Our choice of $r$ in \eqref{eqn:DirichletSeries-r-Choice} immediately shows that  
\begin{equation} \label{eqn:Coefficients-A}
a_n = \Omega\big( \lambda_n^{\alpha-\frac{\delta}{\kappa+1/2} } (\log \lambda_n)^{m-1}  \big) \qquad \text{as $n \to \infty$.}
\end{equation}
We also remark that if $r/\del \in \N$ (which occurs when $(\kappa+1/2)/\del \in \N$), then $\lam_n$ are natural numbers so that $A(s)$ is a standard Dirichlet series.

\subsection{Proof of \cref{thm:Example}(iii): large fluctuations} 
Assume, for a contradiction, that 
\[
\sum_{\lambda_n \leq x} a_n = \Res_{s=\alpha} \Big[A(s) \frac{x^s}{s} \Big] + o\Big( x^{\alpha-\frac{\delta}{\kappa+1/2}} (\log \lambda_n)^{m-1} \Big) 
\]
as $x \to \infty$. For $N \geq 1$ and all sufficiently small $0 < \epsilon < \epsilon(N)$, we have that
\[
a_N = \sum_{\lambda_N-\epsilon < \lambda_n \leq \lambda_N} a_n = \sum_{\lambda_n \leq \lambda_N} a_n - \sum_{\lambda_n \leq \lambda_N-\epsilon} a_n. 
\]
By assumption, it follows that as $N \to \infty$,
\[
a_N = \Res_{s=\alpha} \Big[ A(s) \frac{\lambda_N^s - (\lambda_N-\epsilon)^s}{s} \Big] + o\Big( \lambda_N^{\alpha-\frac{\delta}{\kappa+1/2}} (\log \lambda_N)^{m-1}  \Big). 
\]
Taking $\epsilon \to 0^+$, the residue term vanishes by continuity (see \cref{remark_residue}),  so we may conclude that
\[
a_N =  o\Big( \lambda_N^{\alpha-\frac{\delta}{\kappa+1/2}} (\log \lambda_N)^{m-1}  \Big) 
\]
as $N \to \infty$. This contradicts \eqref{eqn:Coefficients-A}.

\section{Limiting counterexample: Proof of \cref{thm:Example_bounded}}\label{sec_example_bounded}

 We prove a more general theorem, which (in combination with Theorem \ref{thm:Example}) immediately implies Theorem \ref{thm:Example_bounded}.  
 \begin{thmnum}\label{thm:Example_deduction}
  Let $r(x)$ be a non-negative, monotone increasing function on $[1,\infty)$ with the property that $r(x+1)\ll r(x)$ uniformly in $x \geq 1$.  Given $\alpha > 0, \delta \in (0,\alpha)$, $\kappa > \frac{1}{2}$, and $m \in \Z^+,$ suppose there exists an increasing sequence   $\{\lambda_n\}_{n \geq 1}$ of real numbers tending to infinity with $\lam_1 \geq 1$, and a  sequence of non-negative real numbers $\{a_n\}_{n \geq 1}$ defining a general  Dirichlet series $A(s) = \sum_{n=1}^{\infty} a_n \lambda_n^{-s}$ which satisfies all of the following: 
	\begin{enumerate}[(i)]
		\item The Dirichlet series $A(s)$ satisfies \cref{hyp:Strong} with   $\alpha, \delta, \kappa,$ and $m$. 

        \medskip 

		\item As $n \to \infty$,  $ a_n = \Omega\Big( r(\lam_n)  \Big)$.


	\end{enumerate}
Then there exists a general Dirichlet series $\tilde{A}(s) = \sum_{n =1}^\infty a_n' (\lam_n')^{-s}$ that satisfies (i) and:   

(ii*) For all $n \geq 1$, $0 \leq a_n \leq 1$. 

\medskip

(iii)  As $x \to \infty$, $\displaystyle \sum_{\lambda_n \leq x} a_n = \Res_{s=\alpha} \Big[ A(s) \frac{x^s}{s} \Big]  + \Omega\Big( r(x) \Big). $	
 \end{thmnum}
To deduce Theorem \ref{thm:Example_bounded}, we apply Theorem \ref{thm:Example_deduction} to the example constructed in Theorem \ref{thm:Example} with the function $r(x) = x^{\al - \del/(\kappa+1/2)}(\log x)^{m-1}$. Then for all $x \geq 1$, $r(x+1) \leq r(2x) \ll_{\al,\del,\kappa,m} r(x)$, which suffices for the application.

 Under the hypotheses of Theorem \ref{thm:Example_deduction}, for infinitely many $n$ the series $A(s)$ has a ``large'' coefficient $a_n$ indexed by  $\lam_n$, and these contribute to the ``rough'' behavior of the truncated partial sums $\sum_{\lam_n \leq x}a_n$. The idea of the proof  is to spread out the effect of each coefficient $a_n$ into a consecutive string of coefficients, each of size at most 1, indexed by $\lam_n, \lam_n+\ep_n,\ldots, \lam_n + \lfloor a_n \rfloor \ep_n$, for some sequence of $\ep_n$ that decay to zero sufficiently fast as $n \maps \infty$. This process will nevertheless preserve the rough behavior of the partial sums. 

 We consider a sequence $\{\ep_n\}_n$ of small positive numbers, decreasing to zero sufficiently rapidly (according to criteria specified momentarily). Define 
 \[ \tilde{A}(s) = \sum_{n=1}^\infty \sum_{j=0}^{\lfloor a_n\rfloor} \frac{a_n}{\lfloor a_n \rfloor + 1} (\lam_n + j\ep_n)^{-s}.\]
We specify that each $\ep_n$ is sufficiently small that $\lam_n + \lfloor a_n \rfloor \ep_n <\min\{ \lam_{n+1}, \lam_n+1\}$. Consequently, $\tilde{A}(s)$ can be written as a general Dirichlet series, say  $\sum_{m=1}^\infty a_m'(\lam_m')^{-s}$, in which  each coefficient $a_m'$ takes the form $a_n (\lfloor a_n\rfloor +1)^{-1}$ (for some $n$). In particular, $a_m' \in [0,1)$ for all $m \geq 1$, so that (ii*) holds. 

We claim that $\tilde{A}(s)$ is absolutely convergent wherever $A(s)$ is, and thus in particular for $\Re(s)>\al$. Indeed, by the non-negativity of the $a_n$ and $\ep_n$,
\[ \sum_{m=1}^\infty |a'_m (\lam_m')^{-s}| = \sum_{n=1}^\infty \sum_{j=0}^{\lfloor a_n\rfloor} \frac{a_n}{\lfloor a_n \rfloor + 1} (\lam_n + j\ep_n)^{-\Re(s)}
\leq \sum_{n=1}^\infty  a_n \lam_n^{-\Re(s)} =  \sum_{n=1}^\infty  |a_n \lam_n^{-\Re(s)}|.\]
Consequently, in the region $\Re(s)>\al$, we can define $B(s):=A(s) - \tilde{A}(s) = \sum_{n=1}^\infty b_n(s)$ in which 
\[ b_n (s)= a_n \lam_n^{-s} - \sum_{j=0}^{\lfloor a_n \rfloor}\frac{a_n}{\lfloor a_n \rfloor + 1} (\lam_n + j \ep_n)^{-s}.\]

We claim it suffices to show that the general Dirichlet series  $B(s)$ is absolutely convergent (and hence holomorphic) in the region $\Re(s) >0$, and 
\beq\label{Bs_Hyp_B}
|B(s)| \ll_\al (1+|\Im(s)|)^\kappa \qquad \text{ for all $0 < \Re(s) \leq \al$.}
\eeq
Indeed, supposing these two properties hold, then $\Res_{s=\al}[A(s)x^s/s] = \Res_{s=\al}[\tilde{A}(s)x^s/s],$ and moreover $\tilde{A}(s)$ satisfies Hypothesis \ref{hyp:Strong} for $\al,\del,\kappa,m$, so that (i) holds. To verify (iii), suppose on the contrary that  
\beq\label{assumed_tail} \sum_{\lam_m' \leq x} a_m' = \Res_{s=\al}\left[ A(s) \frac{x^s}{s}\right] + o(r(x)) \qquad \text{as $x \maps \infty.$}
\eeq
Let $\{\tau_n\}_n$ be a sequence of small positive numbers, decreasing to zero sufficiently rapidly  that
\[\sum_{\lam_m' \leq \lam_n + \lfloor a_n \rfloor \ep_n} a_m' - \sum_{\lam_m' \leq \lam_n -\tau_n} a_m' 
= \sum_{j=0}^{\lfloor a_n\rfloor} \frac{a_n}{\lfloor a_n \rfloor + 1} = a_n.
\]
(It suffices that $\tau_n$ is chosen so that $\lam_n-\tau_n > \lam_{n-1} + \lfloor a_{n-1} \rfloor \epsilon_{n-1}$.)
On the other hand, applying (\ref{assumed_tail}) to each term on the left-hand side, and the hypothesis (ii) for $A(s)$ on the right-hand side,
\begin{multline*}
  \Res_{s=\al}\left[ A(s) \frac{(\lam_n +\lfloor a_n \rfloor\ep_n)^s}{s}\right] - \Res_{s=\al}\left[ A(s) \frac{(\lam_n -\tau_n)^s}{s}\right] +o(r(\lam_n +\lfloor a_n \rfloor\ep_n))+o(r(\lam_n-\tau_n))\\= \Omega(r(\lam_n)) \qquad \text{as $n \maps \infty$.}
\end{multline*}
Consequently,
\[  \Res_{s=\al}\left[ A(s) \frac{(\lam_n +\lfloor a_n \rfloor\ep_n)^s}{s}\right] - \Res_{s=\al}\left[ A(s) \frac{(\lam_n -\tau_n)^s}{s}\right]= \Omega(r(\lam_n)) \qquad \text{as $n \maps \infty$,}\]
under the hypothesis that $r(x+1) \ll r(x)$ uniformly in $x$.
However, the explicit expression computed for 
$\Res_{s=\al}[A(s)x^s/s]$ in Remark \ref{remark_residue} is a continuous function of $x$, so that for each $n$, there exists a choice of $\ep_n$ and $\tau_n$ sufficiently small that the left-hand side is as small as we like; in particular the sequences $\ep_n$ and $\tau_n$ can be chosen to decay sufficiently rapidly that the left-hand side    is $o(r(\lam_n))$ as $n \maps \infty$. This is a contradiction, and so (\ref{assumed_tail}) is false. This verifies property (iii) for the series $\tilde{A}(s)$. 

It remains to prove that the difference series  $B(s)$ is absolutely convergent   in the region $\Re(s) >0$ and (\ref{Bs_Hyp_B}) holds. To accomplish this, fix an $s$ with $\Re(s)>0$, and use the non-negativity of $a_n$ to write
\[ |b_n (s)| \leq   \sum_{j=0}^{\lfloor a_n \rfloor}\frac{a_n}{\lfloor a_n \rfloor + 1} |\lam_n^{-s}  - (\lam_n + j \ep_n)^{-s}| \leq a_n \cdot  \max_{\substack{t \in \R \\ 0 \leq t \leq \lfloor a_n \rfloor \ep_n}}  |\lam_n^{-s}  - (\lam_n + t)^{-s}|.\]
As a result,
\[ |b_n(s)| \leq a_n \cdot \min \{ 2, |s|\lam_n^{-1} a_n \ep_n\} = \min \{ 2a_n, |s| a_n^2 \lam_n^{-1} \ep_n\}.\]
Here the first option follows since $|\lam_n^{-s}  - (\lam_n + t)^{-s}| \leq 2$ by the triangle inequality, and the second option is via the mean value theorem (using that $\lam_n \geq 1$ and $\Re(s)>0$).
 Consequently, for any fixed cut-off $N$ (which we will choose momentarily in terms of $s$), 
 \beq\label{b_break_sum} \sum_{n=1}^\infty |b_n(s)|= \sum_{1 \leq n \leq N} |b_n(s)| + \sum_{n>N} |b_n(s)| \leq 2\sum_{n=1}^{N}a_n + |s|\sum_{n>N} a_n^2\lam_n^{-1} \ep_n.
 \eeq
 In order to achieve both convergence and (\ref{Bs_Hyp_B}), we now define for all real $t \geq 0$ that $N(t) \geq 0$ is the largest integer such that $\sum_{n=1}^{N(t)} a_n \leq (1+t)^\kappa$; then $N(t)$ is a non-decreasing function that tends to infinity as $t \maps \infty$. 
 We now specify that the sequence $\ep_n$ decreases sufficiently rapidly that $\sum_{n=1}^\infty a_n^2 \lam_n^{-1}\ep_n$ converges; moreover we specify that $\ep_n$ decreases so rapidly that for every $t \geq 0$, 
 \[\sum_{n>N(t)} a_n^2\lam_n^{-1} \ep_n \leq t^{-1}.\]
 Then applying (\ref{b_break_sum}) with $N=N(|s|)$ shows that 
 \[ \sum_{n=1}^\infty |b_n(s)| \leq 2(1+|s|)^\kappa + 1.
 \]
This verifies that $B(s)$ converges absolutely for $\Re(s)>0$ as well as (\ref{Bs_Hyp_B}), and the proof of Theorem \ref{thm:Example_deduction} is complete.

\section{Limiting counterexample: Proof of Theorem \ref{thm_hyp_weak_counterex} and Theorem \ref{thm_hyp_weak_counterex_vertical_growth}}\label{sec_hyp_weak_counterex}
 
We now record self-contained proofs of Theorem  \ref{thm_hyp_weak_counterex}  and Theorem \ref{thm_hyp_weak_counterex_vertical_growth}.
Theorem  \ref{thm_hyp_weak_counterex} is deduced from a statement about an auxiliary series, using ideas recorded in \cite{Ten15}, \cite{JS21}.
\begin{propnum}[Auxiliary series $B(s)$]\label{prop_B_to_define_A}
Consider the Dirichlet series $B(s) = \sum_{n \geq 1} b_n n^{-s}$ with non-negative coefficients defined by $b_n := 1 + \cos (\log^2 n)$ for each $n \geq 1$.  The series $B(s)$ is holomorphic in the region $\Re(s)>1$. Moreover $B(s)$   continues meromorphically to $\Re(s)>0$, and within this region has only a single simple pole at $s=1$.  
\end{propnum}

 \begin{cornum}\label{cor_A_mero_from_B}
 The series initially  defined in the region $\Re(s)>1$ by $A(s)=-B'(s)$ is holomorphic in the region $\Re(s)>1$   and continues meromorphically to $\Re(s)>0$, and within this region has only a pole of order $2$ at $s=1$.  
 \end{cornum}
 \begin{proof}[Deduction of Corollary \ref{cor_A_mero_from_B}]
 Because $B(s)$ is holomorphic in the region $\Re(s)>1$, $A(s)$ may be computed by differentiating the Dirichlet series $B(s) = \sum_{n \geq 1} b_n n^{-s}$ term by term, so that   $A(s) = \sum_{n\geq 1} a_n n^{-s}$ with non-negative coefficients $a_n := b_n \log n$; this converges locally uniformly in the region $\Re(s)>1$ (see e.g. \cite[p. 13]{MonVau07}). Since by Proposition \ref{prop_B_to_define_A}, $B(s)$   continues meromorphically to $\Re(s)>0$ with only a single simple pole at $s=1$, then  $A(s)$ is meromorphic in the region $\Re(s)>0$ with its only singularity in this region being a pole of order 2 at $s=1$.
 \end{proof}

\begin{proof}[Deduction of   Theorem \ref{thm_hyp_weak_counterex}]
Define 
\[A(s) =  - B'(s).\]
Corollary \ref{cor_A_mero_from_B} verifies the meromorphic property claimed in Theorem \ref{thm_hyp_weak_counterex}.
To complete the proof of Theorem \ref{thm_hyp_weak_counterex}, it only remains to show that
\beq\label{weak_counterexample_partial_sum} \sum_{n \leq x} a_n  = \sum_{n\leq x} (\log n + (\log n) \cos (\log^2 n)) = x \log x + x \left( \tfrac{1}{2} \sin (\log^2 x) -1\right) + O(x/\log x).
\eeq
To verify this expansion, first observe that by partial summation 
\[ \sum_{1 \leq n \leq x} \log n = x \log x - (x-1) + O(\log x).\]
Second, by the Euler-Maclaurin summation formula (\ref{EulerMaclaurin}) with $\phi(u) = (\log u) \cos (\log^2 u)$, 
\[ \sum_{1 \leq n \leq x} \log n  \cos (\log ^2(n)) = \int_{1}^x (\log t) \cos (\log^2 t)dt  + O(\log^3 x). 
\]
(This uses that $\phi(1)=0$, so that the sum is nominally over $1<n \leq x$, and is initially proved for $x$ an integer;  replacing $\lfloor x \rfloor$ by $x$ in the right-hand side only contributes a term that is dominated by the remainder term already present.)
The remainder term is sufficiently small for the claim (\ref{weak_counterexample_partial_sum}).
Now since 
\[ \frac{d}{dt}\left[ \tfrac{1}{2} t \sin (\log^2 t) \right]= (\log t)\cos (\log^2 t)  + \tfrac{1}{2} \sin (\log^2 t),\]
the integral evaluates to precisely the desired main term and a remainder term:
\[ 
\int_{1}^x (\log t) \cos (\log^2 t)dt  = \tfrac{1}{2}x \sin \left(\log^2 x\right) - \tfrac{1}{2}\int_1^x  \sin \left(\log^2 t\right) dt.
\]
Thus (\ref{weak_counterexample_partial_sum}) will follow if the last term on the right is $O(x/\log x)$. We estimate the contribution from integrating over $t \in [1,2]$ trivially by $O(1)$. Note that
\[ \frac{d}{dt} \left[   \frac{t}{2\log t} \cos (\log^2 t) \right] =- \sin \left(\log^2 t\right)   + \tfrac{1}{2}\cos \left(\log^2 t\right) \left( \frac{1}{\log t} - \frac{1}{(\log t)^2}\right).\]
Thus   
\begin{align*}\left|\int_2^x  \sin \left(\log^2 t\right) dt\right| & \ll \left|\left.\frac{t \cos \left(\log^2 t\right)}{\log t}\right|_{2}^{x}\;\right| + \left|\int_2^x  \cos \left(\log^2 t\right)\left( \frac{1}{\log t}- \frac{1}{(\log t)^2}\right) dt\right| \\
& \ll \frac{x}{\log x} +\int_2^x \frac{dt}{\log t}\ll \frac{x}{\log x},
\end{align*}
completing the verification of (\ref{weak_counterexample_partial_sum}).
 \end{proof}

\subsection{Proof of Proposition \ref{prop_B_to_define_A} for auxiliary series $B(s)$}\label{sec_proof_B_ppties}
First observe that the series $B(s)$ converges absolutely for $\Re(s)>1$ (and uniformly in any half-plane $\Re(s)\geq 1+\del$ for a fixed $\del>0$), since $b_n \in [0,2]$ for all $n$. Thus $B(s)$ is holomorphic in the region $\Re(s)>1$. Now we must verify its meromorphic continuation to $\Re(s)>0$.
Define the function $R(x)$ by writing 
\beq\label{bn_sum}
\sum_{1 \leq n \leq x}b_n = \int_1^x \left(1+ \cos (\log^2 t)\right)\,dt + R(x)
\eeq
for $x > 1$ (and set $R(x)=0$ for $x \leq 1$). 
By the Euler-Maclaurin summation formula (\ref{EulerMaclaurin}), 
it follows that $R(x) \ll    \log^2 x.$
If we denote the partial sum on the left-hand side by the function $b(x)$, then using the convention in (\ref{Mellin_dfn1}), the Mellin transform $\Mcal(b)(s)$ of $b(x)$   is   the associated Dirichlet series $B(s)$.
Thus for $\Re(s)>1$,
\[ B(s)  = \int_1^\infty x^{-s} d F(x) + \int_1^\infty x^{-s} dR(x) ,\]
in which $F(x)=\int_1^x (1+ \cos (\log^2 t))dt$. 
By applying the definition of the Riemann-Stieltjes integral to the first term (see \cite[\S 6.1 example  (2)]{BatDia04}) and 
  integration by parts to the second integral, this implies (again for $\Re(s)>1$):
\[ B(s) = \int_1^\infty \left(1+ \cos (\log^2 t)\right)t^{-s} dt 
+ s \int_1^\infty R(t) t^{-s-1}dt.\]
Since $R(x) \ll \log^2 x$, the second integral is convergent for $\Re(s)>0$.  By \cite[Thm. 6.20]{BatDia04}, the Mellin transform $\mathcal{M}(R)(s)$ of a function $R$ of at most polynomial growth is holomorphic on the right-half plane in which the Mellin transform converges; thus the second integral is holomorphic for $\Re(s)>0$.

The contribution of 1 to the first integral on the right-hand side can be integrated explicitly to $1/(s-1)$ \cite[\S 6.1 example 6.1(2)]{BatDia04}. For the cosine term, apply Euler's identity $\cos \theta = (e^{i\theta} + e^{-i\theta})/2$ and the change of variables $u=\log t$. Then in total the first integral on the right-hand side evaluates to 
\beq\label{sumII}
\frac{1}{s-1} + \frac{1}{2} I_+(s) +\frac{1}{2}I_-(s),
\eeq
in which the simple pole of $B(s)$ at $s=1$ is made visible in the first term, and we define two functions $I_+(s)$ and $I_-(s)$ of a complex variable by:
\beq\label{I_original}
I_\pm (s) = \int_0^\infty \exp\{ (1-s) u \pm iu^2 \} du.
\eeq
 
\begin{propnum}\label{prop_I_plus_minus}
    Each of $I_+(s)$ and $I_-(s)$ is holomorphic for $\Re(s)>1$, and extends to an entire function of $s$.  
\end{propnum}
Assuming this proposition, observe that we can conclude that $B(s)$ extends meromorphically to $\Re(s)>0$ with its only singularity in this region being the single simple pole at $s=1$ isolated in (\ref{sumII}).  
Thus the proof of Proposition \ref{prop_B_to_define_A} is complete, pending the proof of Proposition \ref{prop_I_plus_minus}.

\subsection{Proof of Proposition \ref{prop_I_plus_minus} for entire functions $I_\pm (s)$}
Each of $I_+(s)$ and $I_-(s)$ is absolutely convergent (as an integral of $u$) for $\Re(s)>1$, and uniformly convergent in any right-half plane $\Re(s)\geq 1+\del$ for a fixed $\del>0$. For $\Re(s) \geq 1+\del$, each truncation of the integral to $\int_0^N$ is an integral of a function, say $F(s,u)$, that is holomorphic for $s \in \Omega :=\{s: \Re(s)>1\}$ and is jointly continuous for $(s,u) \in \Omega \times [0,N]$; thus the integral $\int_0^N F(s,u)du$ is holomorphic for $s \in \Omega$ \cite[Thm. 5.4]{SSComp}. Taking $N \maps \infty$, we can write $I_\pm (s)$ as a uniform limit of holomorphic functions in the region $\Re(s) \geq 1+\del$, so that $I_\pm (s)$ is holomorphic in that region \cite[Thm. 5.2]{SSComp}. Since $\del>0$ is arbitrary, each of $I_+(s)$ and $I_-(s)$ is holomorphic for $\Re(s)>1$.

Next, we claim that each of $I_+(s)$ and $I_-(s)$ extends to an entire function of $s$.  Indeed, for $I_\pm(s)$ as defined above, we claim that by contour integration, the integral  over $u$ on the positive real axis can be shifted  to the half-ray at angle $\pm \pi/4$ to the positive real axis in $\C$ (formally by $u \mapsto e^{\pm \frac{\pi i }{4}} u$), leading to the expression
\beq\label{Ipm_after_contour}
I_\pm (s) = e^{\pm \frac{\pi i }{4}}\int_0^\infty \exp\{ (1-s) ue^{\pm \frac{\pi i }{4}} -u^2 \} du.
\eeq
Once we have verified this expression, it makes visible that each of $I_+(s)$ and $I_-(s)$ is an entire function of $s$, since the right-hand side is an entire function of $s$. Indeed, fix any compact set $S \subset \C$. By the familiar argument,  if the integral is truncated to $[0,N]$, then the integral is a holomorphic function (say $f_N$) of $s \in S$ (as an integral over a compact region of a holomorphic function jointly continuous in $s,u$). Moreover, the integrand is bounded by $\exp\{ \Re((1-s) ue^{\pm \frac{\pi i }{4}} -u^2)\} \ll \exp\{ -u^2 + C|u|\}$, for a constant $C$ depending on the set $S$. Because the quadratic exponential decay dominates, it follows that as $N \maps \infty$, the functions $f_N$ converge uniformly for  $s \in S$, and their limit (namely the integral on the right-hand side) is thus holomorphic in $S$. This holds for arbitrary compact sets $S$, which suffices for the claim. 

It remains to prove that $I_+(s)$, as originally defined in (\ref{I_original}), can be expressed as (\ref{Ipm_after_contour}) after a contour shift, and similarly for $I_-(s)$. For $I_+(s)$, we will initially prove this for any fixed $s=\sig+i\tau$ in a region $\mathcal{R}_+$, and for $I_-(s)$, we will initially prove this for any fixed $s$ in a region $\mathcal{R}_-$. We  define these regions by
\[ \mathcal{R}_+ = \{s=\sig+i\tau: \sig \geq 2, \tau< -1\}, \qquad 
\mathcal{R}_- = \{s=\sig+i\tau: \sig \geq 2, \tau>1\}.
\]
Once we have verified that $I_+(s)$, as initially defined, agrees with the right-hand side of (\ref{Ipm_after_contour}) for all $s$ in $\mathcal{R}_+$,  then (\ref{Ipm_after_contour}) provides an analytic continuation of (\ref{I_original}) to all $s \in \C$. Similarly once we have verified that $I_-(s)$, as initially defined, agrees with the right-hand side of (\ref{Ipm_after_contour}) for all $s$ in $\mathcal{R}_-$,  then (\ref{Ipm_after_contour}) provides an analytic continuation of (\ref{I_original}) to all $s \in \C$.

Fix $s \in \mathcal{R}_\pm$. To justify the change of contour applied to (\ref{I_original}) to  obtain (\ref{Ipm_after_contour}), we note first that the function $e^{(1-s)z}e^{\pm iz^2}$ has no pole in $z$ for $z$ in the upper right quadrant (for the $+$ case), and no pole in the lower right quadrant (for the $-$ case). Second, we must check that the contribution of the integral of this function along the arc $\ga^\pm_R$ (from angle $0$ to angle $\pm \pi /4$ at radius $R$), given by 
\[  
C^\pm_R :=\int_{\ga^\pm_R} e^{(1-s)z} e^{ \pm iz^2}dz = \int_0^{\pm \pi/4} \exp\{ (1-s) Re^{i\theta} \pm i(Re^{i\theta})^2\} (iRe^{i\theta}) d\theta,
\]
goes to zero as $R \maps \infty$. Recall that $s \in \mathcal{R}_\pm$ is fixed. 
 By writing $s=\sig +i\tau$ and taking the real part of the exponent,
 \begin{align*}
      |C^\pm_R| & \ll R \int_0^{\pm \pi/4} 
 \exp \{ \mp R^2 \sin (2\theta) + R ((1-\sigma) \cos \theta + \tau \sin \theta)\} d\theta 
\\
& \ll R \int_0^{\pm \pi/4} 
 \exp \{ \mp R^2 \sin (2\theta) + R( -\cos \theta \mp \sin \theta)\} d\theta.
 \end{align*}
Here we have applied the constraints on $\sig, \tau$ in the definition of $\mathcal{R}_+$ and $\mathcal{R}_-,$ respectively, and recalled  the signs of $\cos \theta$ and $\sin \theta$ in the respective region of integration.
We extract the decay guaranteed by the cosine term, and   bound the remaining integrand by 1:
 \[ |C^\pm_R|\ll R e^{-R \cos (\pm \pi/4)} |\int_0^{\pm \pi/4} 
   d\theta | \ll R e^{-c_0 R},\]
  for a fixed $c_0 >0$. 
 This confirms the vanishing of $C^\pm_R $ as $R \maps \infty$, for each fixed $s \in \mathcal{R}_\pm$, as desired. This completes the verification that each of $I_+(s)$ and $I_-(s)$ is an entire function.
 This completes the proof of Proposition \ref{prop_I_plus_minus}, and hence the proof of Proposition \ref{prop_B_to_define_A}, Corollary \ref{cor_A_mero_from_B}, and Theorem \ref{thm_hyp_weak_counterex}.

\subsection{Asymptotics along vertical lines}\label{sec_asymp_vertical_lines}
 
We now turn to the considerations of Theorem \ref{thm_hyp_weak_counterex_vertical_growth}.
We will prove the following general statement by contour integration:
\begin{lemnum}\label{lemma_asymptotic_exponential_growth}
 Let $\be = a +ib \in \C$ with $a,b \in \R$, $a>0, b \neq 0$. Fix $k \in \{0,1\}$.   Then 
\[ J_k(\be) :=\int_0^\infty u^k e^{2\be u - u^2} du = \be^k \sqrt{\pi}e^{\be^2}  + j_k(\be), \] 
in which $|j_0(\be)| \leq |\be| /2a^2$ and $|j_1(\be)| \ll |\be|^2 (1/a^2 + 1/a^4)$.
\end{lemnum}
 This lemma applies to non-real $\be$ with $\Re(\be)>0$. On the other hand, observe that uniformly for all $\be$ with $\Re(\be)\leq 0$, a trivial upper bound shows  
\beq\label{J_trivial_upper}
|J_k(\be)| \leq \int_0^\infty u^k e^{2\Re(\be)u -u^2} du \leq \int_0^\infty u^k e^{ -u^2} du \ll 1.
\eeq

\begin{cornum}\label{cor_I_plus_minus_growth}
Let $I_+(s)$ and $I_-(s)$ be the entire functions considered in Proposition \ref{prop_I_plus_minus}, as defined in (\ref{I_original}) or equivalently (\ref{Ipm_after_contour}), with derivatives denoted $I_+'(s)$ and $I_-'(s)$.
 For every fixed $\del \in (0,1)$,  $\limsup_{t \maps -\infty} |I_+'(1-\del+it)|\ll 1$, while 
\beq\label{I'plus}
I_+'(1-\del +it) = \tfrac{\sqrt{\pi}}{2}  \ep_+(\del,t) t e^{+ \tfrac{1}{2}\del t}  (1+O_\del(1/t))+O(1),
\eeq
  as $t \maps + \infty$, 
with  a continuous function $\ep_+(\del,t)$ such that $|\ep_+(\del,t)|=1$ for all $\del,t$.
 For every fixed $\del \in (0,1)$,  $\limsup_{t \maps +\infty} |I_-'(1-\del+it)|\ll 1$, while 
\beq\label{I'minus} 
I_-'(1-\del +it) = \tfrac{\sqrt{\pi}}{2}  \ep_-(\del,t) t e^{- \tfrac{1}{2}\del t}  (1+O_\del(1/|t|))+O(1),
\eeq
  as $t \maps - \infty$, 
with  a continuous function $\ep_-(\del,t)$ such that $|\ep_-(\del,t)|=1$ for all $\del,t$.
\end{cornum}
 Consequently, for $s$ on the line $\Re(s)=1-\del$ with $t=\Im(s)$,   $|I'_+(s)+I_-
'(s)|$ exhibits exponential growth in $|t|$ as $|t| \maps \infty$, with the exponential growth coming from $|I'_+(s)|$ for $t \maps + \infty$ and from $|I'_-(s)|$ for $t \maps -\infty$. Moreover, observe that because of the $O(1)$ remainder term, the statement  (\ref{I'plus}) remains valid as $t\maps -\infty$ as well (even though nominally the first term decays to zero in this regime). Analogously,   the statement (\ref{I'minus}) remains valid as $t \maps + \infty$. 

\begin{proof}[Deduction of Corollary \ref{cor_I_plus_minus_growth}]
    By differentiating the identity (\ref{Ipm_after_contour}) for the entire function $I_\pm (s)$, we may write
\[
I_\pm' (s) =- (e^{\pm \frac{\pi i }{4}})^2\int_0^\infty u\exp\{  (1-s)ue^{\pm \pi i/4} -u^2 \} du.
\]
Consider $s$ on the line $\Re(s) = 1-\del$ for a fixed $0<\del < 1$.  On this line, denoting $t=\Im(s)$,
\[
I_\pm' (s) =   \mp i \int_0^\infty u \exp\{ \tfrac{\sqrt{2}}{2}(\del-it) (1\pm i) u -u^2 \} du.
\]
  Define 
\begin{align*}
    \be_+= \be_+(\del,t) &= \tfrac{\sqrt{2}}{4}(\del-it)(1+i) = \tfrac{\sqrt{2}}{4}((\del+t) + i(\del-t))   = \tfrac{1}{2}te^{-\pi i /4}(1+O_\del(1/|t|)), \\
   \be_-=    \be_-(\del,t) &= \tfrac{\sqrt{2}}{4}(\del-it)(1-i) = \tfrac{\sqrt{2}}{4}((\del-t) + i(-\del-t)) = -\tfrac{1}{2}te^{\pi i /4}(1+O_\del(1/|t|)),
\end{align*} 
in which the last relations specify that $|t| \maps \infty$. 
Certainly we may assume $|t|>2$ is sufficiently large that $|\Re(\be_\pm)| \geq 1$ and $|t\pm \del| \geq |t|/2$. 
Note that $\be_+(\del,t)$ has positive real part as $t \maps + \infty$ and negative real part as $t \maps -\infty$; in the latter case we will apply the trivial bound (\ref{J_trivial_upper}). On the other hand, $\be_-(\del,t)$ has positive real part as $t \maps - \infty$ and negative real part as $t \maps +\infty$; in the latter case we will apply the trivial bound (\ref{J_trivial_upper}). Thus we record 
\begin{align*}
I'_+(1-\del +it) & \ll 1 \qquad \text{as $t \maps - \infty$}\\
I'_-(1-\del +it) & \ll 1 \qquad \text{as $t \maps +\infty$}
\end{align*}

 Next we consider the remaining cases, in which  Lemma \ref{lemma_asymptotic_exponential_growth}   proves:  
\begin{align*}
   I_+'(1-\del+it)&= - i J_1(\be_+(\del,t)) = - i \be_+ \sqrt{\pi}e^{\be_+^2} + O(|\be_+|^2 \cdot \Re(\be_+ )^{-2}), \qquad t>0,\\
     I_-'(1-\del+it)&=  i J_1(\be_-(\del,t)) =  i \be_- \sqrt{\pi}e^{\be_-^2} + O(|\be_-|^2 \cdot \Re(\be_- )^{-2}), \qquad t<0.
     \end{align*}
     
 Using the definitions of $\be_+$ and $\be_-$, observe that  as $|t| \maps \infty$, for $0< \del< 1$ fixed,
\[ \mp i  \be_{\pm}\sqrt{\pi} 
   =    -i \tfrac{\sqrt{\pi}}{2}e^{\mp \pi i /4} \cdot t(1+O_\del(1/|t|)).
\]
Additionally,
\begin{align*}
    |\be_\pm(\del,t)| & = \tfrac{\sqrt{2}}{4}( 2t^2 + 2\del^2)^{1/2} \ll |t|,\\
   | \Re(\be_\pm(\del,t))| & = \tfrac{\sqrt{2}}{4} |\del \pm t| \gg  |t| \\
    \Re(\be_\pm(\del,t)^2) &= \tfrac{1}{8}[(\del \pm t)^2 - (\del\mp t)^2 ]= \pm \tfrac{1}{2} \del t.
\end{align*} 
Thus
$O(|\be_\pm|^2 \cdot \Re(\be_\pm )^{-2})   = O(1).$
Define  constants  $ c_\pm = -i \tfrac{\sqrt{\pi}}{2}e^{\mp \pi i /4}.$
We may conclude that  
\[
   I_\pm'(1-\del +it) 
      = c_\pm t \exp\{\be_\pm(\del,t)^2\}  (1+O_\del(1/|t|))+O(1),\]
      for every fixed $\del \in (0,1)$, under the condition that $t \maps + \infty$ when we consider $I_+'$ while $t \maps -\infty$ when we consider $I_-'$. 
In each of these relevant cases, to understand the exponential growth of the main term, we can define  $\ep_\pm(\del,t):= -ie^{\mp \pi i/4}\exp\{i \Im(\be_\pm(\del,t)^2)\}$ so that $|\ep_\pm(\del,t)|=1$ for all $\del,t$. Then 
\begin{align*}
     I_\pm'(1-\del +it) 
     & = \tfrac{\sqrt{\pi}}{2}\ep_\pm(\del,t)t \exp\{\Re(\be_\pm(\del,t)^2)\}  (1+O_\del(1/|t|))+O(1)\\
     & = \tfrac{\sqrt{\pi}}{2} \ep_\pm(\del,t)t e^{\pm \tfrac{1}{2}\del t}  (1+O_\del(1/|t|))+O(1).
\end{align*}
In particular, this shows that on the line $\Re(s)=1-\del$,  $|I_+'(s)|$ exhibits exponential growth in $t$ as $t \maps + \infty$ since the main term  is of order $te^{\del t/2}$ (up to a factor of constant norm).     The complementary effect is seen for $|I_-'(s)|$: on the line $\Re(s)=1-\del$ it exhibits exponential growth in $|t|$ as $t \maps - \infty$ since the main term  is of order $|t|e^{-\del t/2}=|t|e^{\del |t|/2}$ (up to a factor of constant norm).
\end{proof}

\begin{proof}[Deduction of Theorem \ref{thm_hyp_weak_counterex_vertical_growth}]
We have already verified in Corollary \ref{cor_A_mero_from_B} that $A(s)$ extends meromorphically to $\Re(s)>0$ with only one pole, of order 2, at $s=1$. Now we must prove that $A(s)$ exhibits exponential growth in $|\Im(s)|$ along the vertical line $\Re(s)=1-\del$ for each $\del \in (0,1)$. Recall the auxiliary series $B(s)$ defined in Proposition \ref{prop_B_to_define_A}.
We have shown for $\Re(s)>0$ that 
\[ B(s)  = (s-1)^{-1} + \tfrac{1}{2} I_+(s) +\tfrac{1}{2}I_-(s) + s \int_1^\infty R(u) u^{-s-1}du,\]
in which $I_+ (s)$ and $I_-(s)$ are entire functions, and  the last integral is holomorphic for $\Re(s)>0$. Differentiating term by term, 
\[A(s) = B'(s) =-(s-1)^{-2} + \tfrac{1}{2} I_+'(s) +\tfrac{1}{2}I_-'(s) + \tilde{R}(s),\]
say. For any fixed $\del  \in (0,1)$, on the line $\Re(s)=1-\del$, $|s-1|^{-2} \ll_\del 1$. Differentiating the last term in $B(s)$ and applying the fact that $R(u)\ll \log^2 u$ shows that $|\tilde{R}(s)| \ll_\del (1+|s|) \ll (1+|t|)$ for $\Re(s)>0$. So to establish that $A(s)$ exhibits exponential growth on the line $\Re(s)=1-\del$ it suffices to show that $\tfrac{1}{2} I_+'(s) +\tfrac{1}{2}I_-'(s)$ does.

 By Corollary \ref{cor_I_plus_minus_growth},  for any fixed $\del \in (0,1)$, on the line $\Re(s)=1-\del$, the function 
 $|\tfrac{1}{2}I_+'(s) + \tfrac{1}{2} I_-'(s)|$ exhibits exponential growth in $|\Im(s)|$ as $|\Im(s) |\maps + \infty$.   Moreover, from the  asymptotics (and trivial upper bounds) provided by Corollary \ref{cor_I_plus_minus_growth}, we can consolidate all the cases to conclude 
that for every fixed $\del \in (0,1)$, as $|t| \maps \infty$,
 \[ A(1-\del+it) = \tfrac{1}{2}\cdot \tfrac{\sqrt{\pi}}{2} t (\ep_+(\del,t)e^{+ \tfrac{1}{2}\del t}  +\ep_-(\del,t)e^{- \tfrac{1}{2}\del t}  )(1+O_\del(1/|t|)) + O(1+|t|),\]
with a continuous function $\ep_\pm(\del,t)$ such that $|\ep_\pm(\del,t)|=1$ for all $\del,t$.
(For clarity, we note that this  statement makes no claims of exponential decay; for example while the first term decays as $t \maps -\infty$, this behavior is dominated by the $O(1)$ remainder term.)

  This completes the deduction of Theorem \ref{thm_hyp_weak_counterex_vertical_growth} from Corollary \ref{cor_I_plus_minus_growth}; all that remains is to prove Lemma \ref{lemma_asymptotic_exponential_growth}.
 \end{proof}

\begin{proof}[Proof of Lemma \ref{lemma_asymptotic_exponential_growth}]
For any fixed $\be \in \C$, the integral defining $J_k(\be)$ converges absolutely. In particular, $J_k(\overline{\be}) = \overline{J_k(\be)}$; the right-hand side of the claimed identity also admits such a symmetry.  Thus it suffices to prove the lemma in the case where $\be$ lies in the (open) first quadrant, as we henceforward assume. By completing the square,
\[ J_k(\beta) = e^{\be^2}\int_0^\infty u^ke^{-(u-\be)^2} du = \lim_{R \maps\infty} e^{\be^2}\int_{L_1} z^k e^{-(z-\be)^2}dz,\] 
in which $L_1$ denotes the oriented line from $0$ to $R$ along the real axis. We will think of $R$ as fixed and large (relative to $|\be|$), and will later take $R \maps \infty$. Construct a contour $\Gamma$ as pictured. The horizontal portion $L_3$ of the contour $\Ga$ passes through $\be$, and $L_5$ lies on a hyperbola chosen so that for $z$ on $L_5$, the imaginary part of $(z-\be)^2$ is constant, so that there is no oscillation from the phase when integrating along this hyperbola. The vertical portion $L_4$ lies on the line $\Re(z)=-R$, and truncates the hyperbola. The function $z^ke^{-(z-\be)^2}$ is entire (recall $k \in \{0,1\}$), so that no poles are contained in $\Gamma$. 

\begin{tikzpicture}[decoration={markings,
    mark=at position 7cm with {\arrowreversed[line width=1pt]{stealth}},
    mark=at position 9cm   with {\arrowreversed[line width=1pt]{stealth}},
    mark=at position 11.8cm with {\arrowreversed[line width=1pt]{stealth}},
     mark=at position 17cm with {\arrowreversed[line width=1pt]{stealth}}
  }] 

  \draw[thick, ->] (-7.4,0) -- (6,0) coordinate (xaxis);
  \draw[thick, ->] (0,-3.5) -- (0,4) coordinate (yaxis);
\node at (-4,-.3) {$-R$};
\node at (4,-.3) {$R$};
\node at (4.2,3.2) {$\Gamma$};
\node at (2.5,3) {\pgfuseplotmark{*}};
  \node at (2.5,3.3) {$\be$}; 
  \node at (1,.3) {$L_1$};
    \node at (1,3.3) {$L_3$};
    \node at (-2,1.7) {$L_5$};

   \node at (-2.5,-3) {\pgfuseplotmark{*}};
  \node at (-2.5,-3.3) {$-\be$}; 
    \node at (-5.5,-1) {$\ga$}; 
            
  \path[draw,black, line width=1.4pt, postaction=decorate] 
        (-4,3)
    --  (4, 3)  node[midway, above right, black] {} 
    --  (4, 0)  node[midway, right, black] {$L_2$} 
     --  (0,0)   arc[start angle=0, end angle=83.6, x radius=4.5cm, y radius=2.5cm]   node[midway, below, black] {} 
    --  (-4,2.6)  node[midway, below, black] { }  
      --  (-4,3)  node[midway, left, black] {$L_4$}   ;
        \draw[->,dotted, line width=1pt, postaction=decorate] 
       (-2.5,-3) 
    arc[start angle=0, end angle=95, x radius=4.5cm, y radius=2.5cm]       ; 
      
\end{tikzpicture}

Explicitly, suppose $\be = a+ib$ with  $a>0,b >0$. Write $z-\be = A+iB$ with $A,B \in \R$. Then $-(z-\be)^2 = -(A^2 - B^2) - 2ABi$, so that for this function to have constant imaginary part it must be that $AB$ is a constant; we choose this constant to be $ab \neq 0$. Thus we define $L_5$ to lie on the hyperbola defined by 
\[ AB = \Re(z-\be)\Im(z - \be) =ab.\] 
(This is a hyperbola with asymptotes $\Re(z)=\Re(\be)$ and $\Im(z)=\Im(\be)$, with components in the ``first and third quadrants'', relative to these asymptotes.)
For each fixed $\be$, the contribution of the vertical pieces is negligible as $R \maps \infty$, since (for some small $\del = \del_R>0$):
 \begin{align*} 
\int_{L_2} z^k e^{-(z-\be)^2} dz &=i \int_{0}^{b} (R+iy)^k e^{-(R+iy-\be)^2} dy \ll_\be R^k e^{-(R-a)^2} \int_0^b e^{(y-b)^2} dy \ll_{\be} R^k e^{-(R-a)^2}, \\
\int_{L_4} z^k e^{-(z-\be)^2} dz &=i \int_{b}^{b-\del} (-R+iy)^k e^{-(-R+iy-\be)^2} dy \ll_\be R^k e^{-(R+a)^2} \int_b^0 e^{(y-b)^2} dy \ll_{\be} R^k e^{-(R+a)^2}.
\end{align*}
Each of these terms is $\ll_{\be} R^k e^{-R^2/2}$ for all $R \gg a$ (that is, sufficiently large with respect to $\be$), which suffices for the claim.

For the integral on $L_3$ along the line $\Im(z) = \Im(\be)$, observe that for $k \in \{0,1\}$,
\[ - \lim_{R \maps \infty} \int_{L_3} z^k e^{-(z-\be)^2} dz = \int_{-\infty}^\infty (u+ib)^k e^{-(u-a)^2} du = \be^k \sqrt{\pi}.\] 
For clarity, in the case $k=0$ this is due to 
\[ \int_{-\infty}^\infty e^{-(u-a)^2}du = \int_{-\infty}^\infty e^{-v^2}dv = \sqrt{\pi}.\]
In the case $k=1$ we can verify this by splitting the integral as
\[  \int_{-\infty}^\infty u e^{-(u-a)^2} du+ ib \int_{-\infty}^\infty  e^{-(u-a)^2} du
= \int_{-\infty}^\infty (v+a) e^{-v^2} dv+ ib \int_{-\infty}^\infty  e^{-v^2} dv = 0 + a \sqrt{\pi} + ib \sqrt{\pi}.\]
By the residue calculus, it follows that for  fixed $\be$,
\beq\label{J_intermediate_conclusion} J_k(\be) = \lim_{R \maps \infty} e^{\be^2} \int_{L_1} z^k e^{-(z-\be)^2}dz
 = \be^k \sqrt{\pi} e^{\be^2}  + e^{\be^2} E_k(\be),
 \eeq
 in which 
 \[ E_k(\be):=  \lim_{R \maps \infty} \int_{- L_5} z^k e^{-(z-\be)^2}dz. \] 
 Here we use $-L_5$ to denote the hyperbola $L_5$ with orientation reversed.
 
It remains to estimate $E_k(\be)$. We re-parametrize the truncated hyperbola $-L_5$ by setting $\om = z-\be$, and under this mapping $-L_5$ becomes a portion of the hyperbola   that begins at $\om =-\be$ and approaches $\om = -\infty - i 0$ as $R \maps \infty$. (That is, a portion of a hyperbola that has asymptotes on the coordinate axes, and components in the first and third quadrants.) It is convenient to let $\ga$ denote the hyperbola obtained in the limit, as pictured in the figure. Note that $\ga$ can be parametrized for a real variable $s \in [-1,-\infty)$ by $\om = as+ib/s$, which does indeed satisfy $\Re(\om)\Im(\om) = ab$ for every $s \in [-1,-\infty)$. 
With this parameterization,
\[ E_k(\be) = \int_\ga (\om + \be)^k e^{-\om^2}d\om.\] 
To aid computation for the cases $k \in \{0,1\}$, define
\[ \tilde{E}_k(\be) = \int_\ga \om^k e^{-\om^2}d\om,\] 
so that 
\[E_0(\be)  = \tilde{E}_0(\be), \qquad E_1(\be) = \tilde{E}_1(\be) + \be \tilde{E}_0(\be).\]
Explicitly, 
\begin{align*} 
\tilde{E}_k(\be)&=  \int_{-1}^{-\infty} (as+ib/s)^k e^{-(as+ib/s)^2}(a-ib/s^2) ds \\
&=  (-1)^{k+1}\int_{1}^{\infty}s^k (a+ib/s^2)^k e^{-(as+ib/s)^2}(a-ib/s^2) ds.
\end{align*}
For $s \geq 1$, $|a \pm ib/s^2|^2 \leq a^2 + b^2 = |\be|^2$, so that
\[ |\tilde{E}_k(\be)| \leq |\be|^{k+1} \int_1^\infty s^k e^{-g(s)}ds = |\be|^{k+1} e^{-g(1)}\int_1^\infty s^k e^{-(g(s)-g(1))}ds ,\] 
in which $g(s):=\Re((as+ib/s)^2) = a^2s^2-b^2s^{-2}$ is an increasing function for $s \in [1,\infty)$.
Note that $g(1)=a^2-b^2 = \Re(\be^2)$. 
Also note that 
$g'(s) = 2a^2s + 2b^2s^{-3}   \geq 2a^2>0$ for $s \in [1,\infty)$.
Thus by the mean value theorem, $g(s)-g(1) \geq 2a^2(s-1)$ in the region of integration. 
Accordingly, 
\[ |\tilde{E}_k(\be)| \leq |\be|^{k+1}e^{-\Re(\be^2)} \int_1^\infty s^k e^{-2a^2(s-1)}ds = |\be|^{k+1} e^{-\Re(\be^2)}\int_0^\infty (u+1)^k e^{-2a^2u}du.\] 
Consequently $|\tilde{E}_0(\be)|\leq |\be|e^{-\Re(\be^2)}/(2a^2)$, while $|\tilde{E}_1(\be)| \leq |\be|^2 e^{-\Re(\be^2)}(1/(2a^2) + 1/(4a^4)).$
In total, $|E_0(\be)| \leq |\be| e^{-\Re(\be^2)}/2a^2$, while $|E_1(\be)| \ll |\be|^2 e^{-\Re(\be^2)}(1/a^2 + 1/a^4)$.
Inserting this in (\ref{J_intermediate_conclusion}) leads to the conclusion
\[J_k(\be) = \lim_{R \maps \infty} e^{\be^2} \int_{L_1} z^k e^{-(z-\be)^2}dz
 = \be^k \sqrt{\pi} e^{\be^2}  +   j_k(\be)\]
 in which $|j_0(\be)| \leq |\be|/2a^2$ and $|j_1(\be)| \ll |\be|^2 (1/a^2 + 1/a^4)$, and the lemma is proved.
Since  Lemma \ref{lemma_asymptotic_exponential_growth} is proved, this completes the proof of Corollary \ref{cor_I_plus_minus_growth} and hence  Theorem \ref{thm_hyp_weak_counterex_vertical_growth}.
\end{proof}

 \section{Further literature}\label{sec_lit}

 \subsection{References related to Tauberian theorems without explicit remainder terms}\label{sec_lit_HypA}

We mention several sources closely related to Hypothesis \ref{hyp:Weak} and Theorem \ref{thm:Weak}. Far more general results are known:  Delange's paper \cite{Del54} presents the original proofs, and Delange's lecture series \cite{Del55} describes the motivation for studying Abelian and Tauberian theorems, surveys his work, and provides many applications; these  remain comprehensive resources for Tauberian theorems for arithmetic applications. In this section, we briefly tour several of Delange's stronger results \cite[Thms. I-IV]{Del54}, also pointing to treatments in textbooks. 

We set notation for the following three sections. First, let $a(t)$ be a non-decreasing real-valued function on $[0,\infty)$ with Laplace transform 
\[ f(s) = \int_0^\infty a(t) \exp(-st)dt\] 
that is convergent for $\Re(s)>\al$ for a fixed real $\al>0$. Second, let $A(s) = \sum_{n \geq 1}a_n n^{-s}$   be a Dirichlet series with non-negative coefficients and abscissa of convergence $\al>0$. Then $\tilde{a}(t):=\sum_{n \leq \exp(t)} a_n$ is a non-decreasing real-valued function with Laplace transform 
\[ \tilde{f}(s) = \frac{A(s)}{s}\] 
for $\Re(s)>\al$.
A result for Dirichlet series will thus follow from a result for Laplace transforms, just as we deduced Theorem \ref{thm:Weak} from Theorem \ref{thm_Delange}. In each of the next three sections, we will state that if $f(s)$ satisfies a certain hypothesis (let us call it Hypothesis H for the moment) in the region $\Re(s) \geq \al$, then $a(t)$ admits a certain asymptotic. Then, supposing that $A(s)$ satisfies Hypothesis H, we will observe that $\tilde{f}(s)$ also satisfies a version of Hypothesis H; in each case this is a consequence of the fact that $\al>0$ so that the factor $1/s$ in $\tilde{f}(s)$ does not interact with the hypothesis. Thus we may apply the previous case to $\tilde{f}(s)$ in place of $f(s)$, and deduce a corresponding asymptotic for $\tilde{a}(s)$.

\subsubsection{``Pole'' of real order}
A Tauberian theorem can be obtained if $f(s)$ has a singularity that is analogous to a pole, with real order rather than integral order. Suppose that
\beq\label{Delange_Hyp1} 
f(s) = (s-a)^{-\ga}g(s) + h(s)
\eeq
for $\Re(s)>\al$, for a fixed real number $\ga$ with $\ga \not\in \{0,-1,-2,\ldots\}$, and functions $g,h$ that are holomorphic for $\Re(s) \geq \al$, with $g(\al)\neq 0$. Then 
\[
 a(t) \sim \frac{g(\al)}{\Ga(\ga)}e^{\al t}t^{\ga-1}.
 \]
  In particular, suppose that $A(s)$ satisfies (\ref{Delange_Hyp1}) for functions $g,h$.  Then   $\tilde{f}(s) = A(s)/s$ satisfies (\ref{Delange_Hyp1}) for functions $\tilde{g}(s)=g(s)/s$ and $\tilde{h}(s)=h(s)/s$, so that applying the result above to $\tilde{f}$ yields
 \beq\label{BatDia_Delange}
 \sum_{n \leq x} a_n \sim \frac{g(\al)}{\al \Ga(\ga)}x^\al (\log x)^{\ga-1}.
 \eeq
 This is due to Delange \cite[Thm. III Eqn (11)]{Del54}, \cite[Cor. I p. 61]{Del55}.
Bateman and Diamond present this in \cite[Thm. 7.7]{BatDia04} using Mellin transforms as in (\ref{Mellin_dfn1}); the proof is thorough for $\ga<2$, and sketched  for $\ga \geq 2$. Narkiewicz presents this for all real $\ga>0$ in \cite[Thm. 3.9]{Nar83}, and its Corollary (for standard Dirichlet series), closely following Delange's presentation;   this requires only a small modification of Theorem \ref{thm_Delange}.

 \subsubsection{A ``pole'' of real order, and ``lesser poles'' of complex order}\label{sec_pole_complex}
 A Tauberian theorem can be obtained if $f(s)$ has a singularity that is analogous to a pole, with real order, while further ``lesser'' singularities with complex order can occur at the same point.   Suppose in the region $\Re(s)>\al$, for a fixed real number $\ga$ with $\ga \not\in \{0,-1,-2,\ldots\}$,
\beq\label{complex_pole_expansion}
f(s)  =  (s-\al)^{-\ga}g(s) + \sum_{j=1}^N (s-\al)^{-(\lam_j + i\mu_j)} g_j(s) + h(s),
\eeq
in which the functions $g(s)$, $g_j(s)$ and $h(s)$ are holomorphic in $\Re(s) \geq \al$, $g(\al) \neq 0$, and $\lam_j, \mu_j$ are real with $\lam_j < \ga$ for $j=1,\ldots,N$, and $\lam_j +i \mu_j \not\in \{0,-1,-2,\ldots,\}$.
(The powers $(s-\al)^{-(\lam_j +i\mu_j)}$ are defined using the principal branch of the logarithm.)
Then 
\[
 a(t) \sim \frac{g(\al)}{\Ga(\ga)}e^{\al t}t^{\ga-1}.
 \]
  In particular, suppose that $A(s)$ satisfies (\ref{complex_pole_expansion}) for functions $g,g_j,h$.  Then   $\tilde{f}(s) = A(s)/s$ satisfies (\ref{complex_pole_expansion}) for functions $\tilde{g}(s)=g(s)/s$, $\tilde{g}_j(s)=g_j(s)/s$ and $\tilde{h}(s)=h(s)/s$, so that applying the result above to $\tilde{f}$ yields
  \beq\label{Delange_pole_complex_asympt}
 \sum_{n \leq x} a_n \sim \frac{g(\al)}{\al \Ga(\ga)}x^\al (\log x)^{\ga-1}.
 \eeq
 This is due to Delange \cite[Thm. III Eqn (12)]{Del54}, \cite[Cor. I p. 61]{Del55}. (Note that when this result is recorded in Delange \cite[Thm. 1]{Del62}, it is made clear that the hypothesis is the strict inequality  $\lam_j<\ga$.)

\subsubsection{Singularities of logarithmic type}
  Suppose in the region $\Re(s)>\al$, for a fixed real $\ga \geq 0$ and $k \geq 1$, 
\beq\label{Delange_Hyp3}
f(s)  =  \frac{1}{(s-\al)^\ga}\sum_{j=0}^k g_j(s) \left(\log \left(\frac{1}{s-\al}\right)\right)^j + h(s),
\eeq
in which the functions $g_j(s)$ and $h(s)$ are holomorphic in $\Re(s) \geq \al$, and $g_k(\al) \neq 0$. (Here $\log (1/(s-\al))$ is defined according to the principal branch.) 
The Tauberian statement in the case $\ga \not\in\{0,-1,-2,\ldots\}$ is that as $t \maps \infty$,
\[
 a(t) \sim  \frac{g_k(\al)}{\Ga(\ga)} e^{\al t} t^{\ga-1}(\log t)^k.
\]
If $\ga =-m$ for an integer $m \geq 0$,
\[ a(t) \sim (-1)^m m! \cdot  kg_k(\al) e^{\al  t} t^{-m-1}(\log t)^{k-1}.\]
  In particular, suppose that $A(s)$ satisfies (\ref{Delange_Hyp3}) for functions $g_j,h$.  Then   $\tilde{f}(s) = A(s)/s$ satisfies (\ref{Delange_Hyp3}) for functions $\tilde{g}_j(s)=g_j(s)/s$ and $\tilde{h}(s)=h(s)/s$, so that applying the result above to $\tilde{f}$ yields that for $\ga \not\in\{0,-1,-2,\ldots\}$, as $x \maps \infty$,
\[
 \sum_{n \leq x}a_n \sim  \frac{g_k(\al)}{\al \Ga(\ga)} x^\al (\log x)^{\ga-1}(\log \log x)^k.
\]
If $\ga =-m$ for an integer $m \geq 0$,
\[ \sum_{n \leq x}a_n \sim (-1)^m m! \cdot \frac{kg_k(\al)}{\al } x^\al (\log x)^{-m-1}(\log \log x)^{k-1}.\]
This is due to Delange \cite[Thm. IV]{Del54}, \cite[Cor. II p. 62]{Del55}. For $\ga \geq 0$ a proof is described in \cite[Thm. 3.10]{Nar83}, with an application to prove the prime number theorem \cite[Thm. 3.11]{Nar83}.
For any $\ga$ this is recorded in Tenenbaum \cite[Ch. II Notes for \S 7.4 Thm. 15]{Ten15}, without a proof.

 \subsubsection{Arithmetic example with ``pole'' of complex order}\label{sec_pole_complex_application}
Elementary arithmetic questions can lead to more general singularities than poles; here is an example of a singularity of the type described in \S \ref{sec_pole_complex}. 
Let $\Om(n)$ denote the number of prime factors of $n$ counted with multiplicity, so that if $n=p_1^{a_1} \cdots p_r^{a_r}$ then $\Omega(n)=a_1 +\cdots +a_r$. For $q \geq 2$, a Tauberian theorem of Delange can show that
\[ \#\{ n \leq x : q| \Om(n)\} \sim \tfrac{1}{q} x\qquad \text{as $x \maps \infty$.}\] 
This is a special case of \cite[Example A.2.1 p. 78]{Del55}; Delange also presents analogous results for $\om(n)$ (counting distinct prime divisors). The relation to \S \ref{sec_pole_complex} is as follows. Define $\mu_q$ to be the set of $q$-th roots of unity in $\C$, and define the Dirichlet series 
\[ D_q(s) = \sum_{\substack{n \geq 1 \\ q|\Om(n)}}n^{-s} = \frac{1}{q} \sum_{\be \in \mu_q} \prod_p \sum_{k \geq 0}(\be p^{-s})^k = \frac{1}{q} \sum_{\be \in \mu_q} \prod_p (\frac{1}{1-\be p^{-s}}) =: \frac{1}{q} \sum_{\be \in \mu_q}E_{q,\be}(s). \]  
(The second identity follows from the fact that 
$\tfrac{1}{q} \sum_{\be \in \mu_q} \be^{m} =1$ if $q|m$ and vanishes otherwise.) We claim that for each $\be \in \mu_q$, the Euler product  $E_{q,\be}(s) $ admits an expansion
\beq\label{E_expansion} 
E_{q,\be}(s)  = (s-1)^{-\be}h_{q,\be}(s) 
\eeq
in which   $h_{q,\be}(s)$ is holomorphic for $\Re(s) \geq 1$, and $(s-1)^{-\be}$ is defined using the principal branch of the logarithm (and is continuous on $\C \setminus \R_{\leq 1}$). (For $\be=1$ this certainly holds since $E_{q,1}(s) = \zeta(s)$ is the Riemann zeta function, so that moreover   $h_{q,1}(1)=1$). That is, (\ref{E_expansion}) claims that $E_{q,1}(s)$ has a simple pole at $s=1$, while $E_{q,\be}(s)$ has a ``pole'' of complex order $\be$ for each $\be \in \mu_q \setminus \{1\}$. Once this is verified, $D_q(s)$ meets the hypothesis of (\ref{complex_pole_expansion}) with $\al=1$,  $\ga=1$ and $\lam_j + i \mu_j \in \mu_q \setminus \{1\}$, and $g(s)=\tfrac{1}{q}h_{q,1}(s)$. Hence the claimed asymptotic follows from (\ref{Delange_pole_complex_asympt}), once (\ref{E_expansion}) is known. 

To verify (\ref{E_expansion}) for $\be \in \mu_q \setminus \{1\}$,
observe that 
\[ \zeta(s)^\be = \prod_p \frac{1}{(1-p^{-s})^{\be}} = \prod_p \frac{1}{1-\be p^{-s}} \cdot \sig_p(s), \qquad \sig_p(s):= \frac{1-\be p^{-s}}{(1-p^{-s})^\be}. \] 
Since $\sig_p(s) = (1-\be p^{-s})(1+\be p^{-s} + O_\be(p^{-2s}))= (1-\be^2p^{-2s} + O_\be(p^{-2s}))$, the function $H(s):=\prod_p \sig_p(s)$ is an absolutely convergent Euler product that represents a holomorphic function   for $\Re(s) >1/2$; moreover since each factor in $H(s)$ is nonzero, $H(s)$ is nonvanishing in this region. Thus $E_{q,\be}(s) = \zeta(s)^{\be} H(s)^{-1}$ inherits the expansion (\ref{E_expansion}) from $\zeta(s)$.  

\subsubsection{Relaxing holomorphicity on the line $\Re(s) =\al$}
Theorem \ref{thm:Weak} only assumes continuity on the boundary line $\Re(s)=\al$ for $s \neq \al$; this required only a small adaptation to Delange's original method, which assumed holomorphicity on $\Re(s) \geq \al$ for $s \neq \al$. Our inspiration for working out this modification was recovering an unpublished result credited to Stark (Remark \ref{remark_Stark}), but other recent papers have also worked under hypotheses that relax holomorphicity on the line $\Re(s)=\al$ (for $s \neq \al$). For example, consider the case in which $\ga=1/q$ for an integer $q\geq 2$.  In the notation above, if $A(s) = \sum_{n \geq 1} a_n n^{-s}$ with $a_n \geq 0$ has abscissa of convergence $\al>0$, and it is already known that for $\Re(s)> \al$ (and some $a>0$),
\beq\label{Kable_hyp_A_itself}
A(s) = \frac{a}{(s-\al)^{1/q}} + h(s), \qquad \text{with $h(s)$ holomorphic for $\Re(s) \geq \al$,}
\eeq
 then the Delange-type result (\ref{BatDia_Delange}) stated above (with $\ga=1/q$) shows that 
\beq\label{Kable_thm}
\sum_{n \leq x}a_n \sim \frac{a}{\al \Ga(1/q)}x^\al (\log x)^{1/q-1}.
\eeq

Kable \cite{Kab08} considers a different hypothesis instead of 
 (\ref{Kable_hyp_A_itself}). Suppose that instead of  (\ref{Kable_hyp_A_itself}), it is known that 
\beq\label{Kable_hyp_A_power}
(A(s))^q = \frac{a^q}{s-\al} + \tilde{h}(s), \qquad \text{with $\tilde{h}(s)$ holomorphic for $\Re(s) \geq \al$.}
\eeq
Kable remarks that if every zero of $(A(s))^q$ on the line $\Re(s)=\al$ (if any exist) has order divisible by $q$, then (\ref{Kable_hyp_A_power}) implies (\ref{Kable_hyp_A_itself}) and hence the desired asymptotic (\ref{Kable_thm}) holds. But it is desirable to remove this extra hypothesis on the zeroes (which can be difficult to verify). Kable's work \cite[Thm. 2]{Kab08} proves that (\ref{Kable_hyp_A_power}) itself suffices to imply (\ref{Kable_thm}).

Kable does so by showing that for $\ga=1/q$ ($q \geq 2$ integral), 
a  theorem for Laplace transforms  holds under slightly weaker conditions \cite[Thm. 1]{Kab08}. Imitating the notation of Theorem \ref{thm_Delange}, if $a(t)$ is the non-negative, non-decreasing function of interest and $f(s) = \int_0^\infty a(t) \exp(-st)dt$ is its Laplace transform, then consider $F(s) = f(s ) -\frac{a}{(s-\al)^{1/q}}$.   Kable shows that to deduce the relevant Tauberian asymptotic, it suffices to assume $F(s)$ extends continuously to $\Re(s) \geq \al$, and that the map $t \mapsto F(\al+it)$ is locally of bounded variation. 
Finally, Kable  proves that under the hypothesis (\ref{Kable_hyp_A_power}), these properties hold for the function $A(s) - \frac{a}{(s-\al)^{1/q}}$. In this work, Kable builds on the description of the general theory in Korevaar \cite[Ch. III \S 4]{Kor04}.
 Kato \cite{Kat15} extends the work of Kable to an analogous result when $\ga=\ell/q$ for an integer $\ell$.

\subsubsection{Real-variable methods, and multiplicative coefficients}\label{sec_Wirsing}
 
 Tenenbaum presents an exposition of some Abelian and Tauberian theorems in \cite[Ch. II \S 7]{Ten15}. There he distinguishes between the earliest types of Tauberian theorem, which only considered Laplace transforms as functions of a real variable (e.g. in work of Tauber, Hardy-Littlewood, Karamata-Freud),
and the type we have considered in this manuscript, which allow the study of complex-variable functions; these are called transcendental Tauberian theorems \cite[Ch. II \S 7.5]{Ten15}. 

On a related note, if a Dirichlet series $\sum_n a(n) n^{-s}$ has non-negative coefficients that are multiplicative, other methods can extract an asymptotic for the partial sums $\sum_{n \leq x} a(n)$; see for example elementary methods in \cite[\S 1.5-1.6]{IwaKow04}. Furthermore, in this setting a method of Wirsing applies a type of real-variable Tauberian theorem  \cite{Wir61,Wir67}. In broad strokes, for a non-negative multiplicative function $a(n)$, the strategy is to use multiplicativity to establish that $\sum_{n \leq x}a(n) \sim c \tfrac{x}{\log x}\sum_{n \leq x}a(n)/n$, where $c$ is the constant such that $\sum_{p \leq x}a(p)\sim c \tfrac{x}{\log x}$ as $x \maps \infty$. This transforms the problem to studying  the sum $\sum_{n \leq x}a(n)/n $, which is shown to be asymptotic to $\Ga(1+c)^{-1}e^{-\ga c}P(x)$  by a real-variable Tauberian theorem; here $\ga$ denotes the Euler-Mascheroni constant and $P(x)=\prod_{p \leq x} (1+ a(p)p^{-1} + a(p^2)p^{-2} + \cdots )$ is a truncated Euler product. Under suitable hypotheses on $a(n)$, asymptotic behavior of $P(x)$ can be computed. This is not in general an easy problem, but see for example \cite[Thm. 1.1]{IwaKow04}. 

 \subsection{References related to Tauberian theorems with   power-saving remainder terms}\label{sec_lit_HypB}
As mentioned before, one motivation for this article was to correct the statement of \cite[Theorem A.1]{CLT01}; see Remark \ref{remark_CLT}.  
Brumley et al \cite[Appendix B]{BLM21x} prove a result   similar to Theorem \ref{thm:Strong}, which holds for general Dirichlet series, but is stated only for a simple pole $(m=1)$ and has a factor $x^\ep$ rather than a power of $\log x$ in the remainder term. We now mention  three further options in the literature.

\subsubsection{A measure space formulation} 

 Let $(X,\mu)$ be a measure space and $f$ a non-negative-valued  measurable function on $X$. An analogue of a Dirichlet series is defined by
 \[ Z(s) = \int_X f(x)^{-s}d\mu(x).\]
The corresponding counting function for each real $B>0$ is 
$V(B) = \mu(\{f(x) \leq B\}),$ and then one can write $Z(s)  = \int_0^\infty B^{-s} dV(B).$
There is a version of Hypothesis \ref{hyp:Strong} in this measure space setting. A corresponding Tauberian theorem  provides an asymptotic with a main term and a power-saving remainder term, but the exponent of the power-saving remainder  term is not explicit in the existing reference  \cite[Thm. A.1]{CLT10}.

The hypothesis is that there exist real numbers $\al$ and $\del>0$ such that $Z(s)$ converges for $\Re(s)>\al$ and extends to a meromorphic function in   the closed half-plane $\Re(s) \geq \al - \del$. If $\al <0$ then it is assumed that $\lim_{B \maps \infty} V(B) = Z(0).$ 
 It is assumed that 
 $Z$ has no singularity in the half-plane $\Re(s)\geq \al - \del$ other than a pole  of order $m \geq 1$ at $s=\al$. It is assumed that    there exists a real $\kappa>0$ such that for $\Re(s)=\al-\del,$ $|Z(s+it)| \ll (1+|t|)^\kappa.$
 
  The corresponding Tauberian theorem then states that 
  there exists a  polynomial $P$  and a positive $\ep>0$ such that as $B \maps \infty$,
  \[   V(B) = 
  \begin{cases}
   B^\al P(\log B) + O(B^{\al - \ep}) & \text{if $\al \geq 0$}\\
  Z(0)  B^\al P(\log B) + O(B^{\al - \ep}) & \text{if $\al <0$.}
  \end{cases}
  \]
If $\al \neq 0$ then 
$ \deg P = m-1$ and the leading coefficient in $P$ is $\frac{1}{\al (m-1)!}\lim_{s \maps \al}(s-\al)^m Z(s).$ If 
   $\al=0$ then
$\deg P = m$ and the leading coefficient in $P$ is $\frac{1}{m!} \lim_{s \maps \al} (s-\al)^m Z(s).$
 It is reasonable to expect that the savings in the remainder term can be made explicit, by adapting the methods presented in this paper (or, for a weaker but still explicit result, adapting the ideas described in Remark \ref{remark_CLT}).
 
   \subsubsection{Comparing a Dirichlet series to a non-negative coefficient Dirichlet series}\label{sec_Alberts_Roux}
Our next example moves beyond the regime of imposing that the coefficients of the Dirichlet series are \emph{non-negative}. This is attainable if there is a dominating Dirichlet series whose coefficients are indeed non-negative. Precisely, suppose that $\{d_n\}$ is a sequence of complex numbers and $\{a_n\}$ is a sequence of non-negative real numbers with $|d_n| \leq a_n$ for every $n \geq 1$. Define the corresponding standard Dirichlet series 
\[ D(s) = \sum_{n \geq 1}d_n n^{-s}, \qquad A(s)= \sum_{n \geq 1}a_n n^{-s},\]
and denote their corresponding abscissa of absolute convergence by $\al_D$ and $\al_A$, respectively. 
(Note that naturally $\al_A \geq \al_D$.)

The hypothesis is that for some $\del>0$, both $D(s)$ and $A(s)$ have meromorphic continuation in the region $\Re(s)>\al_A-\del$, and they each have at most finitely many poles in this region. Moreover, it is assumed that for some $\kappa >0$,
\[ |D(s)| \ll_{\kappa} (1+|\Im(s)|)^{\kappa+\ep}, \qquad  |A(s)| \ll_{\kappa} (1+|\Im(s)|)^{\kappa+\ep}, \quad \text{for all $\ep>0$,}\]
for all $s$ with $\Re(s)> \al_A - \del$ and $\Im(s)$ sufficiently large (to avoid any pole of $D(s)$ or $A(s)$).

The corresponding Tauberian theorem is that 
\[ \sum_{n \leq x} d_n = \sum_{j=1}^r \Res_{s=s_j}\left[D(s) \frac{x^s}{s}\right] + O(x^{\al_A - \frac{\del}{\kappa+1} + \ep}),\]
for every $\ep>0$, in which $s_1,\ldots,s_r$ denotes the finite set of poles of $D(s)$ in the region $\Re(s) > \al_A  - \frac{\del}{\kappa+1}$.
This is obtained by  Alberts \cite[Thm. 6.1]{Alb24xa}, in an improvement of work of Roux \cite[Thms. 13.3, 13.8]{Rou11}, which in turn improves on Landau \cite{Lan15}.
  In the setting of comparative results, we also remark that \cite[Thm. 2]{LDTT22} provides a Tauberian theorem with explicit remainder term, for generalized Dirichlet series with non-negative coefficients, in which the remainder term is described in terms of a `dual' series that arises in the functional equation. This also builds on Landau \cite{Lan15}, as well as Chandrasekharan and Narasimhan \cite{ChaNar62}.

 \subsubsection{Comparing a Dirichlet series to a power of $\zeta(s)$}
 Our final remark again allows a Dirichlet series to have signed coefficients,  under the condition that there is a dominating Dirichlet series with non-negative coefficients. 
The setting is more general than \S \ref{sec_Alberts_Roux}, in that the singularity type need not be restricted to poles, and it does not assume holomorphicity in a right half-plane, but in a region motivated by the known zero-free region of the Riemann zeta function $\zeta(s)$. Here the key comparison is to a (complex) power of $\zeta(s)$; see for example \cite[Example 5.2]{Alb24xb} for  an Euler product that is compared to $\zeta(2s)^{1+i}$. We now quote the relevant hypothesis and asymptotic from \cite[Ch. II \S 5.3 Thm. 3]{Ten15}, in the context of the Selberg--Delange method.

Suppose that $\{d_n\}$ is a sequence of complex numbers and $\{a_n\}$ is a sequence of non-negative real numbers with $|d_n| \leq a_n$ for every $n \geq 1$. Define the corresponding standard Dirichlet series 
\[ D(s) = \sum_{n \geq 1}d_n n^{-s}, \qquad A(s)= \sum_{n \geq 1}a_n n^{-s}.\]
Suppose that for some $\ga_d \in \C,$ $c_0>0$, $0<\mu \leq 1$ and $M >0$, $D(s)\zeta(s)^{-\ga_d}$ admits an analytic continuation to the region 
\[   \left\{s =\sig+it: \sig \geq 1-\frac{c_0}{1+\log^+|t|}\right\},\]
and in this region satisfies the growth bound
\[ |D(s)\zeta(s)^{-\ga_d}|\leq M ( 1+|t|)^{1-\mu}.\]
Moreover suppose that $A(s) \zeta(s)^{-\ga_a}$ also admits these properties for some $\ga_a \in \C$. 

The Tauberian conclusion is that for all $x \geq 3$ and any  $N \geq 0$, 
\[ \sum_{n \leq x}d_n =x (\log x)^{\ga_d-1}\left[\sum_{j=0}^N \frac{\lam_j(\ga_d)}{(\log x)^j} + O\left(M \exp\{-c_1 \sqrt{\log x}\} + \left(\frac{c_2N+1}{\log x}\right)^{N+1}\right)\right],
\]
in which $c_1,c_2$ and the implied constants depend only on $c_0,\mu,|\ga_d|,|\ga_a|$. Within the order $N$ truncation in this expansion, the coefficients $\lam_j(\ga_d)$ are values of entire functions arising from linear combinations of various derivatives of $D(s)\zeta(s)^{-\ga_d}$  in its region of holomorphicity; we refer to \cite[Ch. II \S 5.3 Eqn. (15)]{Ten15} for the precise definition.

This last example is not strictly speaking in the same vein as the Tauberian-type citations mentioned throughout the remainder of this article; instead, it is in the spirit of comparing a Dirichlet series to a known Euler product in order to confirm inheritance of appropriate analytic properties; we refer to Alberts \cite{Alb24xb} for a   survey on this latter topic.

 \subsection*{\bf Acknowledgments} 
 The authors thank researchers in arithmetic statistics for raising questions and concerns about Tauberian theorems over the past five years. We also thank readers for sending in comments on an earlier version of this manuscript: O. Gorodetsky for pointing out  Wirsing's strategy (\S \ref{sec_Wirsing}), M. \v{C}ech for asking whether Corollary \ref{cor_thm_Delange_Stark} could be obtained, which spurred us to strengthen our proof of Theorem \ref{thm:Weak}; S. Fan for a simplification at (\ref{step_show_H_integrable}), B. Alberts for helpful remarks and references. We finally thank  the anonymous referees for numerous questions, comments, and corrections that improved the exposition significantly. In particular,  anonymous referees suggested simpler methods to prove Theorem \ref{thm:Kronecker}, improvements and simplifications related to the proof of   \cref{lem:weight} using \cref{lem:indicator} and the proof of \cref{cor:Pointwise}, a more general strategy to prove Theorem \ref{thm:Example_bounded} (encapsulated here as Theorem \ref{thm:Example_deduction}), and the application in \S \ref{sec_pole_complex_application}. We are indebted to their cleverness and generosity in describing improvements.  
 LBP thanks P. Gressman for suggesting an approach for Lemma \ref{lemma_asymptotic_exponential_growth}, and K. Soundararajan for suggesting ideas  contributing to earlier versions of Theorems \ref{thm:Example} and \ref{thm:Example_bounded}, as well as I. Petrow, J. Roos, and S. Steinerberger for helpful conversations. 
 
  LBP   has been partially supported   by NSF DMS-2200470, NSF
CAREER grant DMS-1652173, a Joan and Joseph Birman Fellowship, a Guggenheim Fellowship, and a Simons Fellowship;  the Number Theory group at Duke is supported by RTG DMS-2231514. LBP thanks the Hausdorff Center for Mathematics for  productive research visits as a Bonn Research Chair; the Mittag-Leffler Institute for a research visit in March 2024; and Rhodes House in 2025. CLT-B is partially supported by NSF CAREER DMS-2239681 and, previously, by NSF DMS-1902193 and  NSF FRG DMS-1854398. CLT-B thanks Duke University for their hospitality during several research visits and the Mittag-Leffler Institute for a research visit in March 2024.  AZ was partially supported by NSERC grant RGPIN-2022-04982.

\bibliographystyle{alpha}
\bibliography{tauberian_bibliography}  

\end{document}